\RequirePackage{fix-cm}
\documentclass{svjour3}                     
\smartqed  
\usepackage{graphicx}
\usepackage{amssymb,amsmath}
\usepackage{graphicx,float}

\usepackage[hypertexnames=false,colorlinks]{hyperref}
\hypersetup{
    colorlinks=true,
    linkcolor=blue
}
\usepackage{booktabs}
\usepackage{subcaption}
\usepackage{fancyhdr}
\usepackage{lastpage}
\usepackage{geometry}
\usepackage{algorithm}
\usepackage{algpseudocode}
\usepackage{ragged2e}
\usepackage{makecell}
\usepackage{xurl}

\floatname{algorithm}{\sc Algorithm}
\DeclareMathOperator*{\argmin}{arg\,min}

\newtheorem{thm}{Theorem}

\newtheorem{lem}{Lemma}
\newtheorem{prop}{Proposition}

\newtheorem{rmk}{Remark}

\newtheorem{assum}{Assumption}

\begin{document}

\title{Accelerated Plug-and-Play Davis–Yin Splitting for Nonconvex Image Reconstruction 
}


\author{Kuntal Roy         \and
        Pankaj Gautam 
}


\institute{Kuntal Roy \at
              Department of Applied Mathematics \& Scientific Computing \\
              Indian Institute of Technology Roorkee\\
              \email{kuntal\_r@amsc.iitr.ac.in}           
           \and
           Pankaj Gautam \at
             Department of Applied Mathematics \& Scientific Computing \\
              Indian Institute of Technology Roorkee\\
              \email{pankaj.gautam@amsc.iitr.ac.in}   
}

\date{Received: date / Accepted: date}

\maketitle

\begin{abstract}
In this work, we study a class of structured non-convex and non-smooth optimization problems arising in imaging applications, where the objective is the sum of three functions. We consider the Davis–Yin splitting method, a FISTA-type accelerated variant, along with a quasi-Newton line-search method and a plug-and-play (PnP) extension to solve the problem. We develop a unified convergence analysis based on the Davis–Yin envelope under mild assumptions, including the Kurdyka–\L{}ojasiewicz property. Within this framework, we establish subsequential and global convergence of the iterates to stationary points, together with residual convergence rates. We demonstrate the performance of the proposed methods on image restoration and low-rank matrix completion problems. For low-rank matrix completion, we perform experiments on both synthetic data and public datasets to evaluate recovery accuracy and efficiency. In imaging tasks, including image deblurring, we compare the convergence behaviour and reconstruction quality of several algorithmic variants using peak signal-to-noise ratio (PSNR). The results show that FISTA-acceleration and line-search methods improve convergence, while PnP denoisers enhance image quality, and the proposed methods remain competitive on matrix completion tasks.
\keywords{Non-convex optimization \and Davis-Yin algorithm \and FISTA \and Quasi-Newton methods\and PnP algorithm \and Image restoration}
\subclass{68Q25 \and 68R10 \and 68U05}
\end{abstract}

\numberwithin{algorithm}{section}
\setcounter{algorithm}{0}

\section{Introduction}
Many applications in machine learning, signal processing, image restoration, and related fields can be formulated as an optimization problem \cite{jain2017non}. In many cases, these optimization problems can be reformulated as the sum of multiple components. In this paper, we consider a non-convex optimization problem, modeled as:

\begin{equation}\label{Prob 1}
    \min_{x\in\mathbb{R}^n}\varphi(x)=f(x)+g(x)+h(x),
\end{equation}
where all of $f, g$ and $h$ are possibly non-convex, and $g$ and $h$ are $L_g,~L_h$-smooth, respectively.

Splitting algorithms are very popular for solving structured optimization problems because, in these algorithms, we split the objective function into a sum of multiple functions and then tackle the problem, which is very helpful. To solve the non-convex problem \eqref{Prob 1}, the Davis-Yin splitting (DYS) algorithm \ref{DYS} have been studied in \cite{davis2017three,pedregosa2018adaptive,liu2019envelope,bian2021three,wu2024extrapolated}. 

\begin{algorithm}[h]
\caption{Davis-Yin Splitting}
\begin{algorithmic}[1]

\State \textbf{Input:} $x^0 \in \mathbb{R}^n$, step size $\gamma > 0$, relaxation parameter $\lambda>0$.

\For{$k = 0,1,2,\dots$}
    \State $y^k = \mathrm{prox}_{\gamma g}(x^k)$
    \State $z^k \in \mathrm{prox}_{\gamma f}\big(2y^k - x^k - \gamma \nabla h(y^k)\big)$
    \State $x^{k+1} = x^k + \lambda(z^k - y^k)$
\EndFor

\end{algorithmic}\label{DYS}
\end{algorithm}

Forward-backward splitting (FBS) \cite{combettes2011proximal,beck2017first} and Douglas-Rachford splitting (DRS) \cite{douglas1956numerical,lions1979splitting,themelis2020douglas} algorithms are used for optimization problems involving two functions, while the DYS algorithm extends these methods for optimization problems involving three functions. If we choose $g\equiv0$, then Algorithm \ref{DYS} turns into FBS, and if $h\equiv0$, then Algorithm \ref{DYS} turns into DRS. 

Although the DYS algorithm is very useful for the sum of three function type structured optimization problems, but for large-scale optimization problems, it will be very slow. In image restoration, matrix recovery, and related problems, this algorithm requires too many iterations to achieve satisfactory results, increasing the computational cost. Therefore, it is important to increase the convergence speed of this algorithm without changing its structure. For this, the extrapolated-DYS \cite{wu2024extrapolated} algorithm is proposed. In this algorithm, an extrapolated term is used to utilize the information from the previous iteration in the current iteration, which helps the algorithm converge faster. The convergence rate of the sequence generated by the extrapolated-DYS algorithm is $\mathcal{O}\left(\frac{1}{\sqrt{k}}\right)$. The fast iterative shrinkage-thresholding algorithm (FISTA) \cite{beck2009fast} is a special kind of extrapolation algorithm, where we use the sequence of the extrapolation parameter $\{t^k\}$ as $t^{k+1}=\frac{1+\sqrt{1+4(t^k)^2}}{2}$, where $t^0=1$. In the FISTA step, we include the information from the previous iterations in the current iterations with the sequence of extrapolation parameters $\{t^k\}$, which improves the convergence behavior and leads to faster practical performance. So, to further improve the convergence speed and convergence rate, we use a FISTA acceleration term in the DYS algorithm, and we call it the FISTA-type DYS algorithm. For this algorithm, we get the convergence rate of the generated sequence is $o\left(\frac{1}{\sqrt{k}}\right)$.

Now, the FISTA-type DYS algorithm is still an accelerated first order splitting algorithm; therefore, it may be slow for some large-scale problems. So, for further improvement, we use a quasi-Newton direction \cite{dennis1974characterization,dennis1977quasi,nocedal1980updating,byrd1989tool,ip1992local,stella2017forward,themelis2018forward,themelis2022douglas} with the modified Broyden \cite{themelis2018forward,themelis2022douglas} update rule and Continuous-Lyapunov Descent (CLyD) based line-search framework \cite{themelis2018,themelis2022douglas} in our FISTA-type DYS algorithm. In \cite{nocedal2006numerical}, the quasi-Newton method is basically used for smooth optimization. In this method, we try to find the point where the gradient of the main objective function is zero. Later, in \cite{themelis2022douglas}, the quasi-Newton method is applied in DRS and alternating direction method of multipliers (ADMM) algorithms, but to tackle the non-smoothness of the objective function, the main motive has been changed to find the point where the residual is zero. We use this new methodology of the quasi-Newton method in the FISTA-type DYS algorithm. Here, the CLyD-based line-search framework is used to avoid the requirements of differentiability of the Davis-Yin envelope (DYE). In our work, we see that the practical performance of this FISTA-type DYS with a line-search algorithm is better than the FISTA-type DYS algorithm.

Another important development in our work is the plug-and-play (PnP) framework \cite{venkatakrishnan2013plug,gavaskar2021plug,wei2022tfpnp,wu2023retinex}, which is very useful in image restoration problems. In \cite{venkatakrishnan2013plug}, the PnP framework was introduced. In recent years, the PnP framework has been successfully used in many splitting algorithms such as PnP-FBS, PnP-DRS, PnP-ADMM, PnP-primal dual, extrapolated PnP-DYS \cite{ono2017primal,ryu2019plug,hurault2022proximal,ebner2024plug,nair2024convergent,hurault2024convergent,wu2024extrapolated}, etc. In these proximal algorithms the proximal operator has been replaced with a denoiser to include prior knowledge, which improves the reconstruction quality. In \cite{ryu2019plug}, the non-expensiveness issue of denoisers is addressed through spectral normalization in each layer. Then, in \cite{hurault2022gradient}, the gradient-step (GS) denoiser is used to tackle this non-expansiveness problem, and in \cite{hurault2022proximal}, the GS denoiser is related to the proximal operator of a specific non-convex functional. Later, in \cite{wu2024extrapolated}, this GS denoiser is used in the PnP framework for the non-convex optimization problem, where the objective function is written as a sum of three functions, and great success has been achieved.  Motivated by the success of these methods, we use the PnP framework in our proposed algorithms, FISTA-type DYS and FISTA-type DYS with line search.

Although several variants of DYS have been used for structured optimization problems in recent years to improve convergence speed while preserving the simple splitting structure of Algorithm \ref{DYS}, there is still a need to improve convergence speed in large-scale optimization problems like low-rank matrix recovery problems, image restoration problems, etc. To solve this issue, we add an FISTA-type acceleration term to the DYS algorithm. For further improvement, we add a quasi-Newton direction with modified Broyden and CLyD-based line-search in the FISTA-type DYS algorithm. On the other hand, the success of the PnP framework in splitting algorithms for image restoration problems in recent years motivates us to add the PnP framework to our proposed algorithms. Now, the main contributions to this paper are given below:

\subsection*{Our contributions}
\begin{itemize}
    \item[$\bullet$] We propose a FISTA-type Davis-Yin splitting algorithm (Algorithm \ref{FISTA-DYS}), an accelerated variant of the DYS algorithm, for the non-convex optimization problem \eqref{Prob 1} and study the convergence analysis of the proposed algorithm.
    \item[$\bullet$] We discuss the subsequential convergence. Here, we also establish the convergence rate for the generated sequence from Algorithm \ref{FISTA-DYS}, which is $o\left(\frac{1}{\sqrt{k}}\right)$, and using the Polyak-\L{}ojasiewicz (PL) condition, we establish the convergence rate of the objective function, which is $o\left(\frac{1}{k}\right)$. We also establish global convergence for Algorithm \ref{FISTA-DYS}, using the Kurdyka-\L{}ojasiewicz (KL) property.
    \item[$\bullet$] For further improvement in convergence speed, we use the quasi-Newton method with the modified Broyden update rule and the CLyD-based line-search together in Algorithm \ref{FISTA-DYS}, and we propose Algorithm \ref{FISTA-DYS-LS}.
    \item[$\bullet$] We use the PnP framework in our proposed algorithms by replacing a proximal operator with the gradient-step denoiser. 
    \item[$\bullet$] By numerical experiments in structured quadratic-type matrix optimization problems, low-rank matrix recovery problems, and image restoration problems, we show the efficiency of our proposed algorithms in converging very quickly.
\end{itemize}

Before proceeding to our main work, we discuss the outline of the structure of our paper. In Section \ref{Preliminaries}, we introduce some important notations, definitions, and results that are used in our work. In Section \ref{Davis-Yin Envelope}, we define the DYE and discuss the properties of DYE. In Section \ref{FISTA-type DYS section}, we propose our first accelerated algorithm, FISTA-type DYS, and establish the convergence results (subsequential, global) for this proposed algorithm. In Section \ref{FISTA-type DYS with Line-search section}, we propose another accelerated algorithm, FISTA-type DYS with line-search, which is based on the FISTA-type DYS algorithm, quasi-Newton direction with modified Broyden update rule, and CLyD-based line-search, and establish the convergence results (subsequential, superlinear) for this new algorithm. We have extended our proposed algorithms with the PnP framework in Sections \ref{FISTA-type PnP-DYS for Image Restoration} and \ref{FISTA-type PnP-DYS with Line-search for Image Restoration}. In Section \ref{Numerical Experiment}, we give some practical experiments and compare our proposed algorithms with some proposed algorithms (i.e., extrapolated-DYS, DRFDR, extrapolated PnP-DYS, etc.).

\section{Preliminaries}\label{Preliminaries}
\subsection{Notations}
We denote the extended real line by $\overline{\mathbb{R}}:= \mathbb{R} \cup \{+\infty\}$, the set of positive real numbers by $\mathbb{R}_{++}$, and the set of non-negative real numbers by $\mathbb{R}_+:=\mathbb{R}_{++}\cup\{0\}$. We also denote the class of all continuously differentiable functions on $\mathbb{R}^n$, whose gradients are Lipschitz, by $C^{1,1}(\mathbb{R}^n)$. For any $r \in \mathbb{R}$, its positive and negative parts are defined as $[r]^+ := \max\{0, r\}$ and $[r]^- := \max\{0, -r\}$, respectively, so that $r = [r]^+ - [r]^-$ holds. The open and closed balls centered at $x$ with radius $r$ are denoted by $B(x; r)$ and $\overline{B}(x; r)$, respectively. The identity mapping is denoted by $I$, i.e., $I(x) = x$. Given a sequence $\{x^k\}\subset\mathbb{R}^n$, which means that $x^k \in \mathbb{R}^n$ for all $k \in \mathbb{N}$. A sequence $\{x^k\} \subset \mathbb{R}^n$ is said to be summable if $\sum_{k \in \mathbb{N}} \|x^k\| < +\infty$, and square-summable if $\sum_{k \in \mathbb{N}} \|x^k\|^2 < +\infty$ and if $\sum_{k \in \mathbb{N}} \|x^k\| < +\infty$ then $\lim_{k\to+\infty}\|x^k\|=0.$ Let $f : \mathbb{R}^n \to \overline{\mathbb{R}}$ then its domain and epigraph are defined as $\mathrm{dom}\,f := \{x \in \mathbb{R}^n \mid f(x) < \infty\},\;\mathrm{epi}\,f := \{(x,\alpha) \in \mathbb{R}^n \times \mathbb{R} \mid f(x) \le \alpha\}.$ For $\alpha \in \mathbb{R}$, define the level set $\mathrm{lev}_{\leq \alpha} f := \{x \in \mathbb{R}^n \mid f(x) \le \alpha\}.$ The function $f$ is level-bounded if all such level sets are bounded. The regular subdifferential $\hat{\partial}f$ is defined by
$$v \in \hat{\partial}f(\bar{x}) \quad\Longleftrightarrow\quad\liminf_{\substack{x \to \bar{x} \\ x \neq \bar{x}}}\frac{f(x) - f(\bar{x}) - \langle v, x - \bar{x} \rangle}{\|x - \bar{x}\|} \ge 0.$$
Let $\{x^k\}\subseteq\mathbb{R}^n$ and $\{y^k\}\subseteq\mathbb{R}^n$ be two sequences and $\|y^k\|\ne0$ for any sufficiently large $k$. Then we say $x^k=o(y^k)$ as $k\to+\infty$ if $\lim_{k\to+\infty}\frac{\|x^k\|}{\|y^k\|}=0$ and we call $\{x^k\}$ converge to $x^{\star}\in\mathbb{R}^n$ with superlinear rate if $\lim_{k\to+\infty}\frac{\|x^{k+1}-x^{\star}\|}{\|x^k-x^{\star}\|}=0.$ Let $X\in\mathbb{R}^{m\times n}$ be a matrix; then the Frobenius norm is denoted by $\|X\|_F$, and defined by $\|X\|_F=\left(\sum_{i=1}^m\sum_{j=1}^n X_{ij}^2\right)^{\frac{1}{2}}$, where $X_{ij}$ denotes the $(i,j)$-th element of $X$. For a set $S$, the indicator function $\delta_{S}:\mathbb{R}^n\to\overline{\mathbb{R}}$ is defined by
\begin{align*}
    \delta_{S}(x)&=\begin{cases}
        0\quad\;\;, \text{ if $x\in S$},\\
        +\infty\;, \text{ if $x\notin S$}.
    \end{cases}
\end{align*}

\subsection{Backgrounds}
\begin{definition}[\cite{beck2017first}]
    A function $f:\mathbb{R}^n\to[-\infty,+\infty]$ is called 
    \begin{enumerate}
        \item[(i)] proper if $f(x)>-\infty,$ for any $x\in\mathbb{R}^n$ and there exist at least one $x\in\mathbb{R}^n$ such that $f(x)<+\infty;$
        \item[(ii)] lower semicontinuous at $x\in \mathbb{R}^n$ if $f(x)\le\liminf_{n\to+\infty}f(x^k)$ for any sequence $\{x^k\}$ from $\mathbb{R}^n$, where $\lim_{n\to+\infty}x^k=x.$ If $f$ is lower semicontinuous at each point in $\mathbb{R}^n$ then it is called a lower semicontinuous function over $\mathbb{R}^n;$
        \item[(iii)] closed if the set $\{(x,~y):f(x)\le y,~x\in\mathbb{R}^n,~y\in\mathbb{R}\}$ is closed.
  \end{enumerate}
\end{definition}
\begin{thm}[Fermat's optimality condition \cite{beck2017first}]
    Let $f:\mathbb{R}^n\to]-\infty,+\infty]$ be proper. Then $\argmin f = \text{zer } \hat{\partial} f = \{x\in\mathbb{R}^n\;|\;0\in\hat{\partial} f(x)\}.$
\end{thm}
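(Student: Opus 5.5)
The plan is to prove the two inclusions separately, working directly from the definition of the regular subdifferential $\hat{\partial}f$ in the Notations. One direction holds for any proper $f$. The other direction is false in the generality stated, so the plan is to prove it under convexity, which is what the cited source \cite{beck2017first} assumes.

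\textbf{Step 1: $\argmin f\subseteq \text{zer }\hat{\partial} f$.} Let $\bar x\in\argmin f$. Since $f$ is proper, $f(\bar x)<+\infty$, and $f(\bar x)\le f(x)$ for every $x\in\mathbb{R}^n$.
\begin{itemize}
\item Take $v=0$ in the definition of $\hat{\partial}f(\bar x)$.
\item The quotient becomes
\[
\frac{f(x)-f(\bar x)-\langle 0,x-\bar x\rangle}{\|x-\bar x\|}=\frac{f(x)-f(\bar x)}{\|x-\bar x\|},
\]
which is $\ge 0$ for every $x\neq\bar x$.
\item Hence its $\liminf$ as $x\to\bar x$ is $\ge 0$.
\item Therefore $0\in\hat{\partial}f(\bar x)$.
\end{itemize}
This step is routine and uses only properness, which guarantees that $f(\bar x)$ is finite so the quotient makes sense.

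\textbf{Step 2: $\text{zer }\hat{\partial} f\subseteq\argmin f$.} This is the main obstacle, because without further structure it is false. Take $f(x)=x^3$ on $\mathbb{R}$. Then $\hat{\partial}f(0)=\{0\}$, yet $\argmin f=\emptyset$. The same issue arises at any non-minimizing critical point of a smooth nonconvex function, where $\hat{\partial}f(x)=\{\nabla f(x)\}$.

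So I would state this inclusion under the additional hypothesis that $f$ is convex, as in \cite{beck2017first}, and argue as follows.
\begin{itemize}
\item Suppose $0\in\hat{\partial}f(\bar x)$, and fix any $x\in\mathrm{dom}\,f$.
\item By convexity, for $t\in(0,1]$,
\[
f(\bar x+t(x-\bar x))\le f(\bar x)+t\bigl(f(x)-f(\bar x)\bigr).
\]
\item Put $x_t=\bar x+t(x-\bar x)$ in the defining $\liminf$ with $v=0$. The $\liminf$ over all $x'\to\bar x$ is a lower bound for the limit along the ray $x_t$, so
\[
0\le\liminf_{t\downarrow 0}\frac{f(x_t)-f(\bar x)}{t\|x-\bar x\|}\le\frac{f(x)-f(\bar x)}{\|x-\bar x\|}.
\]
\item Hence $f(x)\ge f(\bar x)$ for every $x$, i.e. $\bar x\in\argmin f$.
\end{itemize}
Equivalently, the regular subdifferential of a convex function coincides with the convex subdifferential, and $0$ lying in the latter is exactly the global minimality inequality.

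\textbf{How the result will be used.} For the nonconvex problem \eqref{Prob 1}, only the inclusion of Step 1 should be relied on later. It gives the necessary condition $0\in\hat{\partial}\varphi(x^\star)$ at minimizers, which motivates calling such points stationary. The convergence results for the DYS-type schemes can then be phrased as convergence to points of $\text{zer }\hat{\partial}\varphi$, not to minimizers.
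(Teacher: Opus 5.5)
Your proposal is correct, and so is your diagnosis. With only properness and the regular subdifferential, just the inclusion $\argmin f\subseteq\text{zer }\hat\partial f$ holds. The example $f(x)=x^3$, where $\hat\partial f(0)=\{0\}$ but $\argmin f=\emptyset$, shows that the stated equality is false.

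The paper gives no argument of its own; it imports the result from \cite{beck2017first}. There, Fermat's rule is stated for proper \emph{convex} $f$ with the convex subdifferential $\partial f(\bar x)=\{v:\ f(x)\ge f(\bar x)+\langle v,x-\bar x\rangle\ \forall x\}$. In that setting both directions are immediate from the definition with $v=0$.

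Your Step 2 takes a different route because you keep the paper's $\hat\partial$, whose defining condition is only local. Your ray argument is exactly what upgrades it to the global inequality: it shows $\hat\partial f(\bar x)\subseteq\partial f(\bar x)$ for convex $f$. That is the bridge needed to read the cited convex theorem in the paper's notation.

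Two minor remarks. First, Step 2 tacitly needs $\bar x\in\mathrm{dom}\,f$ for the convexity inequality to make sense. This follows from the standard convention $\hat\partial f(\bar x)=\emptyset$ when $f(\bar x)=+\infty$, which the paper's definition leaves implicit. Second, your comment on later use is consistent with the paper: its arguments only invoke the necessary direction. For instance, the proof of Theorem~\ref{Subsequential convergence} uses only that direction when passing from $w^k\in\mathrm{prox}_{\gamma f}(\cdot)$ to an element of $\hat\partial f(w^k)$.
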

\begin{definition}[\cite{beck2017first}]
    Let $L\ge0$. A function $f:\mathbb{R}^n\to(-\infty,+\infty]$ is called as $L$-smooth over a set $D\subseteq\mathbb{R}^n$ if it is differentiable and $\nabla f$ is Lipschitz continuous over $D$, and $L$ is called the smoothness parameter.
\end{definition}
\begin{lem}[\cite{bauschke2011convex}]
    Let $D\subseteq\mathbb{R}^n$ and $f:\mathbb{R}^n\to(-\infty,+\infty]$ be $L$-smooth $(L\ge0)$ over $D$. Then for any $x,~y\in D,$
    $$|f(y)-f(x)-\langle\nabla f(x),~y-x\rangle|\le\frac{L}{2}\|y-x\|^2.$$
\end{lem}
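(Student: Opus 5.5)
The natural route is the fundamental theorem of calculus along the segment from $x$ to $y$, combined with Cauchy--Schwarz and the Lipschitz continuity of $\nabla f$. For the segment to make sense we need it to lie in $D$; we will assume $D$ is convex, or at least that $[x,y]\subseteq D$ (for instance $D=\mathbb{R}^n$, which is how the lemma is used for $g$ and $h$). The statement as written does not list this hypothesis explicitly, but it is needed. Without it the bound can fail: take $D$ to be two disjoint pieces on which $f$ is affine with different slopes.

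First fix $x,y\in D$ and set $\phi(t)=f(x+t(y-x))$ for $t\in[0,1]$. Since $f$ is differentiable on a neighbourhood of the segment with continuous (indeed Lipschitz) gradient, $\phi$ is $C^1$ and
\[
\phi'(t)=\langle \nabla f(x+t(y-x)),\,y-x\rangle .
\]
By the fundamental theorem of calculus,
\[
f(y)-f(x)=\int_0^1 \langle \nabla f(x+t(y-x)),\,y-x\rangle\,dt .
\]
Subtracting $\langle\nabla f(x),y-x\rangle=\int_0^1\langle\nabla f(x),y-x\rangle\,dt$ gives
\[
f(y)-f(x)-\langle\nabla f(x),y-x\rangle=\int_0^1\langle \nabla f(x+t(y-x))-\nabla f(x),\,y-x\rangle\,dt .
\]

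Next take absolute values and move them inside the integral. Cauchy--Schwarz bounds the integrand by $\|\nabla f(x+t(y-x))-\nabla f(x)\|\,\|y-x\|$. Lipschitz continuity of $\nabla f$ on $D$ with constant $L$ bounds this in turn by $Lt\|y-x\|^2$. Integrating over $[0,1]$ gives $\int_0^1 Lt\,dt\,\|y-x\|^2=\frac{L}{2}\|y-x\|^2$, which is the claimed two-sided estimate.

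The only delicate point is the justification of the integral representation. This needs the segment $[x,y]$ to lie in $D$, and it needs $f$ to be finite and differentiable there. Both follow from $f$ being $L$-smooth over a convex $D$, since differentiability at a point forces finiteness there. Once that is in place, the rest is the routine estimate above.
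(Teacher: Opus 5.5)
The paper gives no proof of this lemma; it cites Bauschke--Combettes, where it is the descent lemma, proved exactly as you do (integral representation along the segment, then Cauchy--Schwarz and the Lipschitz bound $Lt\|y-x\|^2$) under the hypothesis that the domain is open and convex. Your argument is correct, and you are right that the convexity (or segment-containment) hypothesis, which the paper's statement silently drops, is genuinely needed; differentiability at each point of the segment is already enough for the chain rule, so you do not need differentiability on a whole neighbourhood.
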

\begin{definition}[\cite{wang2010chebyshev}]
    \justifying
    If a proper, lower semicontinuous function $f:\mathbb{R}^n\to(-\infty,+\infty]$ is $\sigma$-hypoconvex if for all $x,\; y\in \mathbb{R}^n$, $\lambda\in(0,\;1)$, 
    $$f\left((1-\lambda)x+\lambda y\right)\le(1-\lambda)f(x) + \lambda f(y) + \frac{\sigma}{2}\lambda(1-\lambda)\|x-y\|^2.$$ 
\end{definition}
\begin{rmk}[\cite{themelis2020douglas}]
    A proper, lower semicontinuous function $f:\mathbb{R}^n\to(-\infty,+\infty]$ is $L$- smooth, where $L\ge0$ then $\exists\; \sigma\in[-L,\;L]$ such that $f$ is $\sigma$-hypoconvex and satisfy the following inequality $$\frac{\sigma}{2}\|y-x\|^2\le f(y)-f(x)-\langle\nabla f(x), y-x\rangle\le \frac{L}{2}\|y-x\|^2.$$
\end{rmk}

\begin{definition}[\cite{beck2017first}]
    Let $f:\mathbb{R}^n\to(-\infty,+\infty]$ be a proper and closed function, and $\gamma>0$. Then the Moreau envelope and the proximal mapping of $f$ with respect to $\gamma$ are defined as 
    $f^{\gamma}(x)=\min_{u\in\mathbb{R}^n}\left\{f(u)+\frac{1}{2\gamma}\|x-u\|^2\right\}$ and $\mathrm{prox}_{\gamma f}(x)=\argmin_{u\in\mathbb{R}^n}\Big\{f(u)+\frac{1}{2\gamma}\|x-u\|^2\Big\}$, respectively.
\end{definition}

\begin{definition}[\cite{themelis2018forward}]
    A function $f:\mathbb{R}^n\to\overline{\mathbb{R}}$ is called prox-bounded if $\exists$ a positive $\gamma$ such that $g(x)+\frac{1}{2\gamma}\|x\|^2$ is bounded below for all $x\in\mathbb{R}^n$ and the threshold value of prox-boundedness of $f$ is defined by  $\gamma_f=\sup\left\{\gamma>0:g+\frac{1}{2\gamma}\|\cdot\|^2 \text{ is bounded below on } \mathbb{R}^n\right\}$.
\end{definition}

\begin{prop}[\cite{themelis2020douglas}]\label{Proximal properties of smooth functions}
Let $f \in C^{1,1}(\mathrm{dom} f)$ be an $L_f$-smooth and $\sigma_f$-hypocon- vex function for some $\sigma_f \in [-L_f, L_f]$. Then, its proximal mapping $\mathrm{prox}_{\gamma f}$ with $\gamma \geq \frac{1}{|\sigma_f|}$ is well-defined, and the following properties hold:

\begin{enumerate}
    \item[(i)] The mapping $\mathrm{prox}_{\gamma f}$ is single-valued. Moreover, for all $x \in \mathbb{R}^n$, it holds that
    $$y = \mathrm{prox}_{\gamma f}(x) \quad \Longleftrightarrow \quad x = y + \gamma \nabla f(y).$$
    \item[(ii)] The operator $\mathrm{prox}_{\gamma f}$ is $\frac{1}{1+\gamma L_f}$-strongly monotone and $(1+\gamma \sigma_f)$-cocoercive, i.e., for all $x, \tilde{x} \in \mathbb{R}^n$,
    $$\langle y - \tilde{y}, x - \tilde{x} \rangle \geq \frac{1}{1+\gamma L_f} \|x - \tilde{x}\|^2,\quad\text{and}\quad\langle y - \tilde{y}, x - \tilde{x} \rangle \geq (1+\gamma \sigma_f)\|y - \tilde{y}\|^2,$$
    where $y = \mathrm{prox}_{\gamma f}(x)$ and $\tilde{y} = \mathrm{prox}_{\gamma f}(\tilde{x})$.

    In particular,
    $$\frac{1}{1+\gamma L_f} \|x - \tilde{x}\|\leq \|y - \tilde{y}\|\leq \frac{1}{1+\gamma \sigma_f} \|x - \tilde{x}\|.$$
    Consequently, $\mathrm{prox}_{\gamma f}$ is $\frac{1}{1+\gamma \sigma_f}$-Lipschitz continuous and invertible, and its inverse $I + \gamma \nabla f$ is $(1+\gamma L_f)$-Lipschitz continuous.
    \item[(iii)] The Moreau envelope $f^\gamma \in C^{1,1}(\mathbb{R}^n)$ is $L_{f^\gamma}$-smooth and $\gamma_{f^\gamma}$-hypoconvex, where $\quad\gamma_{f^\gamma} = \frac{\sigma_f}{1+\gamma \sigma_f}$, $L_{f^\gamma} = \max\left\{ \frac{L_f}{1+\gamma L_f}, \frac{1}{\gamma} \right\}$. Moreover, $\nabla f^\gamma(x) = \frac{1}{\gamma}(x - \mathrm{prox}_{\gamma f}(x)),\;\nabla f(\mathrm{prox}_{\gamma f}(x)) = \frac{1}{\gamma}(x - \mathrm{prox}_{\gamma f}(x)).$
\end{enumerate}
\end{prop}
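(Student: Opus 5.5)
The plan is to reduce everything to convex analysis of the auxiliary function
$$\psi(u):=\tfrac12\|u\|^2+\gamma f(u).$$
The Remark following the definition of hypoconvexity gives the two-sided bound $\frac{\sigma_f}{2}\|y-x\|^2\le f(y)-f(x)-\langle\nabla f(x),y-x\rangle\le\frac{L_f}{2}\|y-x\|^2$. Hence $\psi$ is $(1+\gamma\sigma_f)$-strongly convex and $(1+\gamma L_f)$-smooth, provided $1+\gamma\sigma_f>0$. This positivity is the condition the step-size hypothesis is really meant to encode. As written, $\gamma\ge 1/|\sigma_f|$ does not guarantee it when $\sigma_f<0$, so I will work under $\gamma<1/[\sigma_f]^-$, as in \cite{themelis2020douglas}. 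Pinning down this hypothesis is the first thing to settle, since every bound below degenerates without it.

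For (i), note that $f(u)+\frac1{2\gamma}\|x-u\|^2=\frac1\gamma\big(\psi(u)-\langle x,u\rangle\big)+\frac1{2\gamma}\|x\|^2$. This is strongly convex in $u$, so it has a unique minimizer and $\mathrm{prox}_{\gamma f}$ is well defined and single-valued. By convexity, the first-order condition $\nabla\psi(y)=x$, i.e.\ $x=y+\gamma\nabla f(y)$, is necessary and sufficient. Thus $\mathrm{prox}_{\gamma f}=(\nabla\psi)^{-1}=(I+\gamma\nabla f)^{-1}$.

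For (ii), set $d=y-\tilde y$, so that $x-\tilde x=\nabla\psi(y)-\nabla\psi(\tilde y)$. Adding the two-sided bound written at $(y,\tilde y)$ and at $(\tilde y,y)$ gives $\sigma_f\|d\|^2\le\langle\nabla f(y)-\nabla f(\tilde y),d\rangle$. Therefore $\langle d,x-\tilde x\rangle\ge(1+\gamma\sigma_f)\|d\|^2$, which is the cocoercivity claim. For strong monotonicity I will apply the Baillon--Haddad theorem to the convex, $(1+\gamma L_f)$-smooth function $\psi$. It gives $\langle\nabla\psi(y)-\nabla\psi(\tilde y),y-\tilde y\rangle\ge\frac1{1+\gamma L_f}\|\nabla\psi(y)-\nabla\psi(\tilde y)\|^2$, which is exactly $\langle d,x-\tilde x\rangle\ge\frac1{1+\gamma L_f}\|x-\tilde x\|^2$. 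Cauchy--Schwarz applied to both inequalities then yields the sandwich $\frac1{1+\gamma L_f}\|x-\tilde x\|\le\|d\|\le\frac1{1+\gamma\sigma_f}\|x-\tilde x\|$. This gives the Lipschitz continuity of $\mathrm{prox}_{\gamma f}$ and its invertibility. The inverse $I+\gamma\nabla f$ is $(1+\gamma L_f)$-Lipschitz because $\nabla f$ is $L_f$-Lipschitz.

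For (iii), expanding the square gives $f^\gamma(x)=\frac1{2\gamma}\|x\|^2-\frac1\gamma\psi^*(x)$. Since $\psi$ is strongly convex and smooth, $\psi^*$ is $C^{1,1}$ with $\nabla\psi^*=(\nabla\psi)^{-1}=\mathrm{prox}_{\gamma f}$. Hence $\nabla f^\gamma(x)=\frac1\gamma\big(x-\mathrm{prox}_{\gamma f}(x)\big)$. Combining this with (i) gives $\nabla f(\mathrm{prox}_{\gamma f}(x))=\frac1\gamma\big(x-\mathrm{prox}_{\gamma f}(x)\big)$.

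Finally, (ii) says that $\nabla\psi^*$ is $\frac1{1+\gamma L_f}$-strongly monotone and $\frac1{1+\gamma\sigma_f}$-Lipschitz. Consequently $\frac1\gamma(I-\nabla\psi^*)$ has "curvature" between
$$\tfrac1\gamma\Big(1-\tfrac1{1+\gamma\sigma_f}\Big)=\tfrac{\sigma_f}{1+\gamma\sigma_f}\quad\text{and}\quad\tfrac1\gamma\Big(1-\tfrac1{1+\gamma L_f}\Big)=\tfrac{L_f}{1+\gamma L_f}.$$
The lower value gives the hypoconvexity modulus, and the Lipschitz constant follows by taking the larger absolute value. The main obstacle is reconciling this with the constant $\max\{\frac{L_f}{1+\gamma L_f},\frac1\gamma\}$ in the statement. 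The upper curvature $\frac{L_f}{1+\gamma L_f}$ is always at most $\frac1\gamma$. When $\sigma_f<0$, however, the lower curvature can be very negative, with $\frac{|\sigma_f|}{1+\gamma\sigma_f}$ as large as $\frac1\gamma\cdot\frac{\gamma|\sigma_f|}{1-\gamma|\sigma_f|}$, which exceeds $\frac1\gamma$ once $\gamma|\sigma_f|>\frac12$. So the stated constant is valid only under an extra restriction such as $\gamma|\sigma_f|\le\frac12$, or else it should be replaced by $\max\{\frac{L_f}{1+\gamma L_f},\frac{|\sigma_f|}{1+\gamma\sigma_f}\}$. I would verify which hypothesis the authors intend before finalizing this step.
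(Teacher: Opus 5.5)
The paper gives no proof here; the proposition is quoted from \cite{themelis2020douglas}. Your route through $\psi=\frac12\|\cdot\|^2+\gamma f$ is the standard self-contained derivation, and it is correct under your hypothesis $\gamma<1/[\sigma_f]^-$. The ingredients are strong convexity and smoothness of $\psi$, Baillon--Haddad for strong monotonicity, and $f^\gamma=\frac{1}{2\gamma}\|\cdot\|^2-\frac1\gamma\psi^*$ for (iii).

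You do not need to ask which hypothesis the authors intended. Both discrepancies you flag are genuine misstatements, and one quadratic settles them. Take $f=-\frac a2\|\cdot\|^2$ with $a>0$, so $L_f=a$ and $\sigma_f=-a$.

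\begin{itemize}
\item \textbf{Step size.} The prox objective has curvature $\frac1\gamma-a\le 0$ exactly when $\gamma\ge 1/|\sigma_f|$, so it has no minimizer for $x\neq 0$. The printed condition in fact forces $1+\gamma\sigma_f\le 0$ when $\sigma_f<0$, and it is vacuous when $\sigma_f=0$.
\item \textbf{Smoothness constant in (iii).} For $\gamma<1/a$ one finds $\nabla f^\gamma(x)=-\frac{a}{1-\gamma a}\,x$. Its Lipschitz constant $\frac{[\sigma_f]^-}{1+\gamma\sigma_f}$ exceeds $\frac1\gamma$ once $\gamma a>\frac12$. So the correct constant is $\max\bigl\{\frac{L_f}{1+\gamma L_f},\frac{[\sigma_f]^-}{1+\gamma\sigma_f}\bigr\}$: your bounds show it suffices, and this example shows it is tight.
\end{itemize}

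The paper applies the result to $g$ with $\gamma<\frac{1}{L_g+L_h}\le\frac{1}{L_g}\le\frac{1}{|\sigma_g|}$. That regime never satisfies the printed condition but always satisfies yours. The constant in (iii) is not used later, so these slips do not propagate.

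Three points should be made explicit.

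\begin{itemize}
\item \textbf{Sign convention.} You use the convention that $f-\frac{\sigma_f}{2}\|\cdot\|^2$ is convex, which matches the Remark and the constants $1+\gamma\sigma_f$. The paper's displayed definition of hypoconvexity has the opposite sign ($+\frac{\sigma}{2}\lambda(1-\lambda)\|x-y\|^2$), so say which one you mean.
\item \textbf{Domain.} Your argument needs $\mathrm{dom} f=\mathbb{R}^n$, while the statement only asks for $f\in C^{1,1}(\mathrm{dom} f)$. For $f=\delta_{[0,\infty)}$ on $\mathbb{R}$ (with $L_f=\sigma_f=0$) the prox is a non-injective projection, so (i) and the strong monotonicity in (ii) both fail.
\item \textbf{Lipschitz constant.} ``Taking the larger absolute value'' needs one line. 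Suppose $a\|u\|^2\le\langle\nabla F(x)-\nabla F(x'),u\rangle\le b\|u\|^2$ with $u=x-x'$ and $b>a$. Then $G=F-\frac a2\|\cdot\|^2$ is convex and $(b-a)$-smooth, hence $\nabla G$ is $\frac{1}{b-a}$-cocoercive. Expanding $\|au+\nabla G(x)-\nabla G(x')\|^2$ then gives at most $\max\{a^2,b^2\}\|u\|^2$.
\end{itemize}
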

\begin{definition}[\cite{themelis2018forward}]
    Let $f:\mathbb{R}^n\to\mathbb{R}$ be a real-valued function. Then $f$ is called
    \begin{enumerate}
        \item[(i)] strictly continuous at $\tilde{x}$, if $\limsup_{\substack{x,y\to\tilde{x},\\x\ne y}}\frac{|f(x)-f(y)|}{\|x-y\|}<+\infty$;
        \item[(ii)] strictly differentiable at $\tilde{x}$ with $\nabla f(\tilde{x})$, if $\lim_{\substack{x,y\to\tilde{x},\\x\ne y}}\frac{f(x)-f(y)-\langle\nabla f(\tilde{x}),x-y\rangle}{\|x-y\|}=0$.
    \end{enumerate}
\end{definition}

\begin{lem}[\cite{Dong2015comments}]\label{min order lemma}
    Let $\{x^n\},\;\{y^n\}\in\mathbb{R}_{++}$ and $\sum_{n\in\mathbb{N}}x^ny^n<+\infty$. Now assume that, $\sum_{n\in\mathbb{N}}x^n$ is not finite and $\{y^n\}$ is a decreasing sequence. Then $y^n=o\left(\frac{1}{\sum_{k=1}^nx^k}\right).$
\end{lem}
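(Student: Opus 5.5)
Write $S_n:=\sum_{k=1}^n x^k$. By hypothesis $S_n\to+\infty$, and since $y^n>0$ the goal is to show $y^nS_n\to 0$. The plan is the standard tail argument for series with monotone weights. Fix $\varepsilon>0$. Because $\sum_{k}x^ky^k<+\infty$, the tails of this series tend to zero, so there is $N$ with $\sum_{k=N+1}^{\infty}x^ky^k<\varepsilon$.

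For $n>N$ we use the fact that $\{y^k\}$ is decreasing: $y^n\le y^k$ for every $k\le n$. Multiplying by $x^k>0$ and summing over $k=N+1,\dots,n$ gives
$$y^n\sum_{k=N+1}^{n}x^k\le\sum_{k=N+1}^{n}x^ky^k<\varepsilon .$$
Hence
$$0< y^nS_n = y^n S_N + y^n\sum_{k=N+1}^n x^k < y^nS_N+\varepsilon .$$

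It remains to control the fixed-head term $y^nS_N$, and this is the only point needing care. We apply the same decreasing-weights estimate from $1$ to $n$: $y^nS_n\le \sum_{k=1}^n x^ky^k\le C:=\sum_{k\in\mathbb{N}}x^ky^k<+\infty$. So $y^n\le C/S_n\to 0$, because $S_n\to+\infty$ (the divergence hypothesis is used exactly here). Since $S_N$ is a fixed number, $y^nS_N\to0$, and therefore $\limsup_{n\to\infty} y^nS_n\le\varepsilon$. As $\varepsilon>0$ was arbitrary, $y^nS_n\to0$, i.e. $y^n/(1/S_n)\to0$. This is precisely $y^n=o\!\left(1/\sum_{k=1}^n x^k\right)$ in the sense of the $o(\cdot)$ notation fixed earlier, where $1/S_n\neq0$ holds automatically.

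No step is genuinely hard. The main subtlety is the head term $y^nS_N$: it requires first knowing $y^n\to0$, which comes from the crude bound $y^n\le C/S_n$ together with $S_n\to+\infty$.
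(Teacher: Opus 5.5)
Your proof is correct and complete. Monotonicity gives the tail bound $y^n\sum_{k=N+1}^{n}x^k\le\sum_{k=N+1}^{n}x^ky^k<\varepsilon$, the crude bound $y^nS_n\le\sum_{k}x^ky^k$ together with $S_n\to+\infty$ gives $y^n\to0$ and so disposes of the head term $y^nS_N$, and hence $\limsup_n y^nS_n\le\varepsilon$ for every $\varepsilon>0$. The paper gives no proof of this lemma (it only cites \cite{Dong2015comments}); your head/tail splitting is the standard argument for it, and nothing needs to be added.
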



\begin{definition}[\cite{hurault2022proximal}]
    A proper, lower semicontinuous function $f$ is said to be $\beta$-weakly convex, for some $\beta \in \mathbb{R}$, if $f(x)+\frac{\beta}{2}\|x\|^2$ is convex.
\end{definition}

\begin{definition}[\cite{kong2025linear}]
    We say that, $f$ satisfies the Polyak-\L{}ojasiewicz (PL) inequality with parameter $\mu>0$ if $$\mathrm{dist}^2(0,\hat{\partial} f(x)) \ge 2\mu\bigl(f(x)-f^*\bigr),\quad \forall x\in\mathrm{dom}f,$$ where $f^*=\min_{x\in \mathrm{dom}f}f(x).$
\end{definition}

\begin{definition}[\cite{li2016douglas}]
    Consider that, $f_1,~f_2,\cdots,f_p$ and $g_1,~g_2,\cdots,g_q$ are real polynomial functions then the set 
    $$S=\left\{x\in\mathbb{R}^n:f_1(x)=f_2(x)=\cdots=f_p(x),~g_1(x)<0,~g_2(x)<0,\cdots, g_q(x)<0\right\}$$ 
    is called semialgebraic set and a function $f:\mathbb{R}^n\to\mathbb{R}$, then $f$ is called a semialgebraic function if the set $\left\{(x,~f(x))\in\mathbb{R}^{n+1}:x\in\mathbb{R}^n\right\}$ is semialgebraic set.
\end{definition}
\begin{definition}[\cite{bolte2014proximal}]
    Suppose that $f:\mathbb{R}^n\to(-\infty,+\infty]$ is a proper and lower semicontinuous function. Then, we say $f$ satisfies Kurdyka-\L{}ojasiewicz (KL) property at $\tilde{x}\in\mathrm{dom}(\hat{\partial}f)$ if $\exists~\xi>0$, a neighborhood $\mathcal{V}$ of $\tilde{x}$, and a continuous and concave function $\psi:[0,~\xi)\to[0,+\infty)$ such that
    \begin{enumerate}
        \item[(i)] $\psi(0)=0$ and $\psi$ is continuously differentiable on $(0,~\xi)$, where $\psi'>0.$
        \item[(ii)] $\psi'(f(x)-f(\tilde{x}))~\mathrm{dist}(0,~\hat{\partial}f(x))\ge0,~\forall x\in\mathcal{V}\cup\{y\in\mathbb{R}^n:f(\tilde{x})<f(y)<f(\tilde{x})+\xi\}.$ This inequality is called the KL inequality.
    \end{enumerate}
    If $f$ satisfies the KL property at every point in $\mathrm{dom}(\hat{\partial}f)$, then $f$ is called a KL function.
\end{definition}
\begin{prop}[\cite{li2016douglas}]\label{Relation between semialgebraic and KL}
    Suppose that a proper and lower semicontinuous function $f$ is semialgebraic on $\mathbb{R}^n$. Then $f$ is KL function with $\psi(x)=\alpha x^{1-\eta}$, where $\alpha>0$ and $\eta\in[0,~1)$.
\end{prop}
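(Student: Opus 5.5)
This is a cited result: semialgebraic proper lsc functions satisfy the KL property with a power desingularizer. I would follow the standard route through the Łojasiewicz gradient inequality for subanalytic functions, as in Bolte–Daniilidis–Lewis. Fix $\tilde x\in\mathrm{dom}(\hat\partial f)$; after translation assume $\tilde x=0$ and $f(\tilde x)=0$. Since only points with $0<f(x)<\xi$ matter, the task is a lower bound on $\mathrm{dist}(0,\hat\partial f(x))$ in terms of $f(x)$ near $\tilde x$.

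\emph{Step 1: reduce to a one-variable semialgebraic function.} Define, for small $t>0$,
$$m(t)=\inf\{\mathrm{dist}(0,\hat\partial f(x)) : \|x\|\le\rho,\ f(x)=t\}.$$
The graph of $\hat\partial f$ is semialgebraic when $f$ is, because it is defined by a first-order formula over the reals. By Tarski–Seidenberg, $m$ is then a semialgebraic function of one variable.

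\emph{Step 2: show $m(t)>0$ near $0^+$.} The set of critical values of a semialgebraic function is finite; this is a semialgebraic Sard theorem. So for all sufficiently small $t>0$, no point at level $t$ is critical. Lower semicontinuity and the closedness of the limiting subdifferential then give $m(t)>0$ on some $(0,\xi)$.

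\emph{Step 3: obtain the power law.} By the monotonicity lemma and the Puiseux expansion of one-variable semialgebraic functions, $m(t)\ge c\,t^{\eta}$ on $(0,\xi)$ for some $c>0$ and some rational $\eta\ge 0$.

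\emph{Step 4: choose the desingularizer.} Set $\psi(s)=\alpha s^{1-\eta}$ with $\alpha=1/(c(1-\eta))$. Then $\psi'(f(x))\,\mathrm{dist}(0,\hat\partial f(x))\ge 1$, which is the KL inequality. The function $\psi$ is continuous, concave, satisfies $\psi(0)=0$, and has $\psi'>0$ on $(0,\xi)$.

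\emph{Step 5: secure $\eta<1$.} This is the main obstacle. The bound $m(t)\ge ct^{\eta}$ must hold with $\eta<1$, so that $\psi$ is finite and concave at $0$. The clean argument is the subanalytic nonsmooth Łojasiewicz inequality. Pick a definable curve $x(t)$ attaining $m$ approximately, and apply the chain rule along it:
$$t=f(x(t))-f(0)\le\int_0^t m(s)\,\|\dot x(s)\|\,ds.$$
Together with the length bound $\|x(t)\|\le\rho$, this forces $\eta<1$. This curve-selection argument is where I expect the technical effort.

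Since the paper cites \cite{li2016douglas}, I would present these steps briefly and defer the curve-selection details to Bolte–Daniilidis–Lewis.
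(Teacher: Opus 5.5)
The paper gives no proof of this proposition; it only cites \cite{li2016douglas}, which in turn rests on Bolte--Daniilidis--Lewis. Your outline follows that standard route, and Steps 1 and 4 are fine. The genuine gap is Step 2, at the generality of the statement.

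\textbf{The gap in Step 2.} For a merely lower semicontinuous $f$, take a sequence with $f(x^k)=t$, $x^k\to\bar x$ and $\mathrm{dist}(0,\hat\partial f(x^k))\to0$. Lower semicontinuity only gives $f(\bar x)\le t$. Closedness of the limiting subdifferential requires $f(x^k)\to f(\bar x)$, so if $f$ jumps down at $\bar x$, no critical point appears at level $t$. (For continuous $f$ your argument is fine.)

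Here is a concrete counterexample. Let $f(u,v)=0$ for $u\le0$ and $f(u,v)=a+uv$ for $u>0$, with $a>0$.
\begin{itemize}
\item It is semialgebraic and lsc, and its only critical value is $0$.
\item Its level set $\{f=a\}=\{(u,0):u>0\}$ carries the gradients $(0,u)\to0$.
\item Hence $m(a)=0$ for every radius $\rho$ around $\tilde x=(0,0)$.
\end{itemize}
Your reasoning would yield $m>0$ on $(0,\xi)$ for every $\xi>0$, which is false once $\xi>a$. The KL property at the origin does hold, but only after shrinking $\xi$ below $a$. Nothing in the Sard argument produces that restriction.

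\textbf{The repair: let Step 5 do all the work.} Add a slack term; the display in Step 5 needs one anyway, since $\mathrm{dist}(0,\hat\partial f(x(s)))\ge m(s)$ by definition of $m$.
\begin{itemize}
\item If $m=+\infty$ near $0^+$, the KL inequality is vacuous.
\item Otherwise, definable choice gives a bounded semialgebraic curve $x(s)$, $s\in(0,\varepsilon)$, with $\|x(s)-\tilde x\|\le\rho$, $f(x(s))=s$ and $\mathrm{dist}(0,\hat\partial f(x(s)))\le m(s)+s$.
\item The paper's KL notion uses the regular subdifferential, so the chain rule is elementary. Let $v\in\hat\partial f(x(s))$ with $x(\cdot)$ differentiable at $s$. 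Applying the defining inequality of $\hat\partial f$ at $x(s+h)$ with $h\to0^{\pm}$ gives $1=\frac{d}{ds}f(x(s))=\langle v,\dot x(s)\rangle$.
\item Hence $1\le(m(s)+s)\|\dot x(s)\|$ for all but finitely many $s$.
\item Boundedness and the Puiseux expansion give $\|\dot x(s)\|\le Cs^{\alpha-1}$ for some rational $\alpha>0$.
\item Therefore $m(s)\ge c\,s^{\eta}$ for small $s$, with $\eta=\max\{1-\alpha,0\}<1$.
\end{itemize}

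This yields positivity of $m$ and $\eta<1$ at once. It needs no Sard theorem and no lower semicontinuity, so Steps 2 and 3 become unnecessary. It also clarifies Step 5. The operative fact is the Puiseux bound on $\dot x$ (equivalently, finite length of a bounded semialgebraic arc), not the bound $\|x(t)\|\le\rho$ by itself. And the curve need not start at $\tilde x$; only $f(x(s))\to0$ is used, so writing $f(0)$ in your display is unjustified.
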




\section{Davis-Yin Envelope}\label{Davis-Yin Envelope}
We define the Davis-Yin envelope for the sum of three non-convex functions as follows:
\begin{equation}\label{DYE_1}
    \varphi^{\gamma}(x) = \min_{u\in\mathbb{R}^n}\left\{ f(u)+g(y)+h(y) +\langle u-y, \nabla g(y)+\nabla h(y)\rangle + \frac{1}{2\gamma}\|u-y\|^2\right\},
\end{equation}
where $y=\mathrm{prox}_{\gamma g}(x).$

Now, by the Algorithm \ref{DYS}, we define
\begin{equation}\label{DYE}
    \varphi^{\gamma}(x) = f(z)+g(y)+h(y) +\langle z-y, \nabla g(y)+\nabla h(y)\rangle + \frac{1}{2\gamma}\|z-y\|^2,
\end{equation}
where $y=\mathrm{prox}_{\gamma g}(x).$

\begin{rmk}\label{FBE, DRE}
    \begin{enumerate}
        \item[(i)] If we consider that $g=0$ in (\ref{DYE_1}) and (\ref{DYE}) then DYE becomes forward backward envelope (FBE) \cite{themelis2018forward}.
        \item[(ii)] If we consider that $h=0$ in (\ref{DYE_1}) and (\ref{DYE}) then DYE becomes Douglas-Rachford envelope (DRE) \cite{themelis2020douglas}.
    \end{enumerate}    
\end{rmk}
\begin{assum}\label{Assumption 1}
    \leavevmode
    \begin{enumerate}
        \item[\textit{(i)}] $f:\mathbb{R}^n\to\overline{\mathbb{R}}$ is proper, closed, and $\beta_f$-weakly convex.
        \item[\textit{(ii)}] $g\in C^{1,1}(\mathbb{R}^n)$ is $L_g$-smooth, hence $\sigma_g$-hypoconvex for some $\sigma_g\in[-L_g,~L_g].$
        \item[\textit{(iii)}] $h\in C^{1,1}(\mathbb{R}^n)$ is $L_h$-smooth, hence $\sigma_h$-hypoconvex for some $\sigma_h\in[-L_h,~L_h].$
        \item[\textit{(iv)}] $\argmin \varphi \ne\emptyset.$
    \end{enumerate}
\end{assum}
\begin{rmk}
    Under Assumption \ref{Assumption 1}, $f$ and $g$ are prox-bounded if $\gamma<\frac{1}{L_g+L_h}$.
\end{rmk}
\subsection{Properties}
In this section, we discuss some properties of DYE, which are used in the convergence proof.

\begin{prop}[Strict continuity]\label{Strict continuity}
    Let us consider Assumption \ref{Assumption 1} to be valid. For any $\gamma<\frac{1}{L_g+L_h}$, the DYE $\varphi^{\gamma}$ is a strictly continuous real valued function.
\end{prop}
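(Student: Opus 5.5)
The idea is to rewrite $\varphi^{\gamma}$ as a composition of a Moreau envelope of $f$ with smooth maps, and then use that strict continuity is preserved under such compositions. Set $y=\mathrm{prox}_{\gamma g}(x)$. Completing the square in the inner minimization of (\ref{DYE_1}) gives
\begin{align*}
\varphi^{\gamma}(x) &= g(y)+h(y) -\frac{\gamma}{2}\|\nabla g(y)+\nabla h(y)\|^2 \\
&\quad + \min_{u}\Big\{f(u)+\frac{1}{2\gamma}\big\|u-\big(y-\gamma\nabla g(y)-\gamma\nabla h(y)\big)\big\|^2\Big\}.
\end{align*}
The last term is therefore $f^{\gamma}\big(w(x)\big)$ with
$$w(x):=y-\gamma\nabla g(y)-\gamma\nabla h(y).$$
By Proposition \ref{Proximal properties of smooth functions}(i) we have $y+\gamma\nabla g(y)=x$, so $w(x)=2y-x-\gamma\nabla h(y)$. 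This matches the argument of $\mathrm{prox}_{\gamma f}$ in Algorithm \ref{DYS} and shows that (\ref{DYE}) agrees with (\ref{DYE_1}).

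The first step is to show that $x\mapsto y=\mathrm{prox}_{\gamma g}(x)$ is well-defined, single-valued and Lipschitz. Since $\gamma<1/(L_g+L_h)\le 1/L_g$, the function $u\mapsto g(u)+\frac{1}{2\gamma}\|u-x\|^2$ is strongly convex, because $\frac1\gamma-L_g>0$. The resolvent relation and the Lipschitz bound $\frac{1}{1+\gamma\sigma_g}$ then follow as in Proposition \ref{Proximal properties of smooth functions}(i)--(ii), using $1+\gamma\sigma_g\ge 1-\gamma L_g>0$.

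Next, the terms $g(y)$, $h(y)$ and $\frac{\gamma}{2}\|\nabla g(y)+\nabla h(y)\|^2$ are compositions of a Lipschitz map with $C^1$ functions, or with a squared norm of Lipschitz gradients. Each is therefore locally Lipschitz, i.e. strictly continuous at every point. The map $w$ is Lipschitz for the same reason.

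The main obstacle is the term $f^{\gamma}\circ w$. We must show that $f^{\gamma}$ is real valued and locally Lipschitz when $f$ is only proper, closed and $\beta_f$-weakly convex. I would use the $\beta_f$-weak convexity to ensure prox-boundedness with threshold at least $1/\beta_f$, together with the prox-boundedness remark for $\gamma<\frac{1}{L_g+L_h}$. Then $u\mapsto f(u)+\frac{1}{2\gamma}\|u-v\|^2$ is proper, lsc and level-bounded locally uniformly in $v$. The standard result on Moreau envelopes of prox-bounded functions (Rockafellar--Wets, Example 10.32) then gives that $f^{\gamma}$ is finite, lower-$\mathcal{C}^2$ and in particular strictly continuous. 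More concretely, $f^{\gamma}(v)+\frac{1}{2\gamma}\|v\|^2$ is a supremum of affine functions of $v$ whose slopes $u/\gamma$ range over a locally bounded set, since the proximal points are locally bounded. Such a supremum is locally Lipschitz.

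Finally, since sums and compositions of strictly continuous functions with locally Lipschitz maps are strictly continuous, $\varphi^{\gamma}$ is a real valued, strictly continuous function.
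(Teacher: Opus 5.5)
Your argument is correct, but it takes a more self-contained route than the paper.

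\textbf{What the paper does.} It sets $\tilde g=g+h$, notes that $\nabla\tilde g$ is $(L_g+L_h)$-Lipschitz, and invokes \cite[Proposition 3.2]{themelis2020douglas} on strict continuity of the Douglas--Rachford envelope of $(f,\tilde g)$ as a black box. Read literally, that result concerns the envelope built on $\mathrm{prox}_{\gamma\tilde g}$, whereas $\varphi^{\gamma}$ is built on $y=\mathrm{prox}_{\gamma g}(x)$. The citation only applies after an extra observation the paper omits. Since $y+\gamma\nabla\tilde g(y)=x+\gamma\nabla h(y)$, one has $y=\mathrm{prox}_{\gamma\tilde g}(s)$ with $s=x+\gamma\nabla h(\mathrm{prox}_{\gamma g}(x))$. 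Hence $\varphi^{\gamma}$ is the DR envelope of $(f,\tilde g)$ composed with this Lipschitz map.

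\textbf{What your route buys.} Your completion-of-squares identity $\varphi^{\gamma}(x)=(g+h)(y)-\frac{\gamma}{2}\|\nabla(g+h)(y)\|^2+f^{\gamma}(w(x))$ bypasses that reparametrization. It is essentially the argument underlying the cited DR result, written out directly. It also makes visible that the only place where $\gamma<\frac{1}{L_g+L_h}$ (rather than $\gamma<\frac{1}{L_g}$) is genuinely used is the prox-boundedness of $f$.

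\textbf{Three small repairs.}

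First, state that threshold explicitly. By Assumption~\ref{Assumption 1}(iv) and the descent lemma for $g+h$, one has $f(u)\ge\inf\varphi-(g+h)(0)-\langle\nabla(g+h)(0),u\rangle-\frac{L_g+L_h}{2}\|u\|^2$. Hence $\gamma_f\ge\frac{1}{L_g+L_h}>\gamma$. The weak-convexity threshold $\frac{1}{\beta_f}$ plays no role, since $\gamma<\frac{1}{\beta_f}$ is not a hypothesis of this proposition.

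Second, in \cite[Example 10.32]{rockafellar2009variational} it is $-f^{\gamma}$, not $f^{\gamma}$, that is lower-$\mathcal{C}^2$. Strict continuity follows either way.

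Third, your ``more concrete'' argument has a sign reversed. Expanding the square gives $\frac{1}{2\gamma}\|v\|^2-f^{\gamma}(v)=\sup_u\{\frac{1}{\gamma}\langle u,v\rangle-f(u)-\frac{1}{2\gamma}\|u\|^2\}$. So it is this function, not $f^{\gamma}+\frac{1}{2\gamma}\|\cdot\|^2$, that is a supremum of affine functions with slopes $u/\gamma$. Being finite and convex, it is locally Lipschitz.

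None of these affects your conclusion.
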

\begin{proof}
    Let $\tilde{g}=g+h$, then $\|\nabla \tilde{g}(x)-\nabla\tilde{g}(y)\|\le (L_g+L_h)\|x-y\|,$ $\forall x,~y.$ Then, using \cite[Proposition 3.2]{themelis2020douglas}, we reach our proof.
\end{proof}

Now, in the next proposition, we find the relation between the values of DYE and the main objective function.

\begin{prop}[Sandwiching property]\label{Sandwiching property}
    Let us consider Assumption \ref{Assumption 1} is satisfied. Let $\gamma<\frac{1}{L_g+L_h}$ be fixed, and consider $y,~z$ generated by DYS iteration starting from $x\in\mathbb{R}^n$. Then
    \begin{enumerate}
        \item[(i)] $\varphi^{\gamma}(x)\le\varphi(y).$
        \item[(ii)] $\varphi^{\gamma}(x)\ge\varphi(z)+\frac{1-\gamma(L_g+L_h)}{2\gamma}\|z-y\|^2.$
    \end{enumerate}
\end{prop}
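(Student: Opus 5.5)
Both parts come from the variational definition \eqref{DYE_1}, in which $z$ is a minimizer of the inner problem for the given $y=\mathrm{prox}_{\gamma g}(x)$. Write $\tilde g:=g+h$. By Assumption \ref{Assumption 1}(ii)--(iii), $\tilde g$ is $(L_g+L_h)$-smooth, so the descent lemma stated in the preliminaries applies to it. The strategy is to compare the minimal value $\varphi^\gamma(x)$ with a particular candidate $u$ for part (i), and to bound the linearization of $\tilde g$ at $y$ evaluated at $z$ for part (ii).

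For (i), I will take the candidate $u=y$ in \eqref{DYE_1}. The inner product term and the quadratic term then both vanish, so
\[
\varphi^\gamma(x)\le f(y)+g(y)+h(y)=\varphi(y).
\]
If $f(y)=+\infty$ the inequality holds trivially. No smoothness is needed here; we only use that $z$ realizes the minimum, which is exactly how \eqref{DYE} is defined.

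For (ii), I will start from \eqref{DYE} and apply the upper bound of the descent lemma to $\tilde g$ with base point $y$ and target point $z$:
\[
\tilde g(z)\le \tilde g(y)+\langle\nabla\tilde g(y),z-y\rangle+\frac{L_g+L_h}{2}\|z-y\|^2 .
\]
This gives
\[
g(y)+h(y)+\langle z-y,\nabla g(y)+\nabla h(y)\rangle\ \ge\ g(z)+h(z)-\frac{L_g+L_h}{2}\|z-y\|^2 .
\]
Substituting this into \eqref{DYE} yields
\[
\varphi^\gamma(x)\ge f(z)+g(z)+h(z)+\Bigl(\frac{1}{2\gamma}-\frac{L_g+L_h}{2}\Bigr)\|z-y\|^2 ,
\]
which is exactly $\varphi(z)+\frac{1-\gamma(L_g+L_h)}{2\gamma}\|z-y\|^2$.

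The main obstacle is well-posedness rather than either inequality. We need $\mathrm{prox}_{\gamma g}$ to be single-valued, so that $y$ is well defined, and we need the inner minimum in \eqref{DYE_1} to be attained, so that $z$ exists and $\varphi^\gamma(x)$ is finite. For $y$, since $\gamma<1/(L_g+L_h)\le 1/L_g$, the function $g+\frac{1}{2\gamma}\|\cdot-x\|^2$ is strongly convex, so its minimizer is unique; this is the content of Proposition \ref{Proximal properties of smooth functions}. For $z$, the inner objective is $f$ plus a quadratic with coefficient $\frac{1}{2\gamma}$. Its minimizer set is therefore $\mathrm{prox}_{\gamma f}(y-\gamma\nabla\tilde g(y))$, which is nonempty by prox-boundedness of $f$ (the preceding remark) and closedness of $f$. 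With $y$ and $z$ in place, the inequalities follow as above; the positivity of the coefficient $1-\gamma(L_g+L_h)$ uses the hypothesis $\gamma<\frac{1}{L_g+L_h}$.
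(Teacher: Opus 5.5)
Your proof is correct and follows the paper's argument. Part (i) comes from testing $u=y$ in the minimization that defines $\varphi^\gamma$. Part (ii) comes from the descent lemma: you apply it once to $g+h$ with constant $L_g+L_h$, while the paper applies it to $g$ and $h$ separately and adds the bounds, which is the same computation.
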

\begin{proof}
    From (\ref{DYE_1}), we see
    \begin{align*}
        \varphi^{\gamma}(x) &= \min_{u\in\mathbb{R}^n}\left\{ f(u)+g(y)+h(y) +\langle u-y, \nabla g(y)+\nabla h(y)\rangle + \frac{1}{2\gamma}\|u-y\|^2\right\}\\
        &\le f(y)+g(y)+h(y)=\varphi(y).
    \end{align*}
    
 Hence, the proof of \textit{(i)} is done.

    Now, from (\ref{DYE}), we see
    \begin{align*}
        \varphi^{\gamma}(x) &= f(z)+g(y)+h(y) +\langle z-y, \nabla g(y)+\nabla h(y)\rangle + \frac{1}{2\gamma}\|z-y\|^2\\
        &=f(z) + g(y) + \langle z-y,\nabla g(y)\rangle+h(y)+\langle z-y,\nabla h(y)\rangle+\frac{1}{2\gamma}\|z-y\|^2\\
        &\ge f(z)+g(z)-\frac{L_g}{2}\|z-y\|^2+h(z)-\frac{L_h}{2}\|z-y\|^2+\frac{1}{2\gamma}\|z-y\|^2\\
        &\qquad\qquad\qquad\qquad\qquad\qquad\qquad\qquad\qquad[\text{Using decent lemma for $g$ and $h$}]\\
        &=\varphi(z)+\frac{1-\gamma(L_g+L_h)}{2\gamma}\|z-y\|^2.
    \end{align*}
    
    Hence, \textit{(ii)} is proved.
\end{proof}

Now, the next proposition connects the minimum value and the set of minimizers of DYE and the main objective function. This proposition also establishes the equivalence of the level-boundedness property of the main objective function and DYE.

\begin{prop}[Minimization and level-boundedness equivalence]\label{Minimization and level-boundedness equivalence}
     Let us assume Assumption \ref{Assumption 1} is satisfied. For all $\gamma<\frac{1}{L_g+L_h}$ the following hold:
     \begin{itemize}
         \item[(i)] $\inf\varphi=\inf\varphi^{\gamma}.$
         \item[(ii)] $\argmin \varphi=\mathrm{prox}_{\gamma g}(\argmin\varphi^{\gamma}).$
         \item[(iii)] $\varphi$ is level bounded if and only if so is $\varphi^{\gamma}.$
     \end{itemize}
\end{prop}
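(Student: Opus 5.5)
The plan is to derive all three items from the sandwiching property (Proposition \ref{Sandwiching property}) together with the invertibility of $\mathrm{prox}_{\gamma g}$ from Proposition \ref{Proximal properties of smooth functions}(i). The latter says that $y=\mathrm{prox}_{\gamma g}(x)$ if and only if $x=y+\gamma\nabla g(y)$. Hence $\mathrm{prox}_{\gamma g}$ is a bijection of $\mathbb{R}^n$, and every $y$ is attained from $\bar x:=y+\gamma\nabla g(y)$. I will use this well-posedness as in the paper: $\gamma<1/(L_g+L_h)\le 1/L_g$, so $1+\gamma\sigma_g>0$.

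For (i), fix any $y$ and set $x=y+\gamma\nabla g(y)$. Then $\mathrm{prox}_{\gamma g}(x)=y$, and Proposition \ref{Sandwiching property}(i) gives $\varphi^\gamma(x)\le\varphi(y)$, so $\inf\varphi^\gamma\le\inf\varphi$. Conversely, for every $x$, with $z$ the DYS point, item (ii) of the same proposition gives
\[\varphi^\gamma(x)\ge\varphi(z)+\tfrac{1-\gamma(L_g+L_h)}{2\gamma}\|z-y\|^2\ge\inf\varphi,\]
since the coefficient is positive. Hence $\inf\varphi=\inf\varphi^\gamma$. By Assumption \ref{Assumption 1}(iv) this common value is finite and attained by $\varphi$.

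For (ii), first take $y^\star\in\argmin\varphi$ and $x^\star=y^\star+\gamma\nabla g(y^\star)$. Then $\varphi^\gamma(x^\star)\le\varphi(y^\star)=\inf\varphi=\inf\varphi^\gamma$, so $x^\star\in\argmin\varphi^\gamma$ and $y^\star=\mathrm{prox}_{\gamma g}(x^\star)$. Conversely, let $x^\star\in\argmin\varphi^\gamma$, with $y^\star,z^\star$ its DYS points. The sandwich gives
\[\inf\varphi=\varphi^\gamma(x^\star)\ge\varphi(z^\star)+c\|z^\star-y^\star\|^2\ge\inf\varphi+c\|z^\star-y^\star\|^2,\qquad c>0.\]
This forces $z^\star=y^\star$ and $\varphi(y^\star)=\inf\varphi$, so $y^\star\in\argmin\varphi$. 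This is the key step. It needs the minimum in \eqref{DYE_1} to be attained at some $z$, which holds because $f$ is weakly convex and prox-bounded for this $\gamma$. I expect this to be the only delicate point, and it is covered by the remark after Assumption \ref{Assumption 1}.

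For (iii), suppose first that $\varphi$ is level bounded and take $x\in\mathrm{lev}_{\le\alpha}\varphi^\gamma$. The sandwich gives $\varphi(z)\le\alpha$ and $\|z-y\|^2\le(\alpha-\inf\varphi)/c$. So $z$ lies in a bounded set, hence so does $y$, and therefore so does $x=y+\gamma\nabla g(y)$, because $\nabla g$ is Lipschitz and thus maps bounded sets to bounded sets. Conversely, suppose $\varphi^\gamma$ is level bounded and take $y\in\mathrm{lev}_{\le\alpha}\varphi$. Then $x=y+\gamma\nabla g(y)$ satisfies $\varphi^\gamma(x)\le\alpha$, so $x$ lies in a bounded set. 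Since $\mathrm{prox}_{\gamma g}$ is $\frac{1}{1+\gamma\sigma_g}$-Lipschitz, $y=\mathrm{prox}_{\gamma g}(x)$ is bounded too.
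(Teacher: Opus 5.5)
Your argument is correct. For (i) and (iii) it is the paper's argument: both rest only on the sandwich property and on $\mathrm{prox}_{\gamma g}$ being a bijection with Lipschitz inverse $I+\gamma\nabla g$. In (iii) the paper proves ``$\varphi$ level bounded $\Rightarrow$ $\varphi^\gamma$ level bounded'' by contradiction along an unbounded sequence, while you argue directly, but the estimates are the same.

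The real difference is in (ii). The paper proves $\argmin\varphi^\gamma=\argmin(\varphi\circ\mathrm{prox}_{\gamma g})$ and then applies $\mathrm{prox}_{\gamma g}$. For the inclusion $\subseteq$ it passes from $\varphi^\gamma(\tilde x)\le\varphi(\mathrm{prox}_{\gamma g}(x))$ for all $x$ to $\varphi(\mathrm{prox}_{\gamma g}(\tilde x))\le\varphi(\mathrm{prox}_{\gamma g}(x))$. That step requires $\varphi(\mathrm{prox}_{\gamma g}(\tilde x))\le\varphi^\gamma(\tilde x)$, which is the opposite of what Proposition \ref{Sandwiching property}(i) gives. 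Your step supplies exactly what is missing there: you use the lower bound of Proposition \ref{Sandwiching property}(ii), with its strictly positive coefficient, to force $z^\star=y^\star$ and hence $\varphi(y^\star)=\inf\varphi$. So your version of (ii) is the more complete one.

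One correction on your ``delicate point''. Weak convexity of $f$ by itself only yields attainment in \eqref{DYE_1} for $\gamma<1/\beta_f$, which is not assumed here. The right justification is that $\inf\varphi>-\infty$ and the descent lemma for $g+h$ give $f(u)\ge a-\langle b,u\rangle-\frac{L_g+L_h}{2}\|u\|^2$ for some $a\in\mathbb{R}$, $b\in\mathbb{R}^n$. Hence $u\mapsto f(u)+\frac{1}{2\gamma}\|u-v\|^2$ is lower semicontinuous and coercive when $\gamma<\frac{1}{L_g+L_h}$, so the minimum is attained. In any case, Proposition \ref{Sandwiching property}(ii), which you invoke, already presupposes that such a $z$ exists.
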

\begin{proof}
    In Proposition \ref{Sandwiching property}, we see that for any sequence $(x,~y,~z)$ from Algorithm \ref{DYS},
    {\allowdisplaybreaks
    \begin{align}\label{Prop 4.3 (i)}
        \notag
        &\varphi(z)+\frac{1-\gamma(L_g+L_h)}{2\gamma}\|z-y\|^2\le\varphi^{\gamma}(x)\le\varphi(y)\\\notag
        \Rightarrow\;&\varphi(z)+\frac{1-\gamma(L_g+L_h)}{2\gamma}\|z-\mathrm{prox}_{\gamma g}(x)\|^2\le\varphi^{\gamma}(x)\le\varphi(\mathrm{prox}_{\gamma g}(x))\\\notag
        \Rightarrow\;&\inf_{x\in\mathbb{R}^n}\left\{\varphi(z)+\frac{1-\gamma(L_g+L_h)}{2\gamma}\|z-\mathrm{prox}_{\gamma g}(x)\|^2\right\}\le\inf_{x\in\mathbb{R}^n}\varphi^{\gamma}(x)\\
        &\qquad\qquad\qquad\qquad\qquad\qquad\qquad\qquad\qquad\qquad\qquad\quad\le\inf_{x\in\mathbb{R}^n}\varphi(\mathrm{prox}_{\gamma g}(x)).
    \end{align}}

    Now, we have $\gamma<\frac{1}{L_g+L_h}$. Then
    \begin{align*}
        &\varphi(z)\le\inf_{x\in\mathbb{R}^n}\varphi^{\gamma}(x)\le\inf_{y\in\mathbb{R}^n}\varphi(y)\\
        \Rightarrow\;&\inf_{x\in\mathbb{R}^n}\varphi^{\gamma}(x)=\inf_{y\in\mathbb{R}^n}\varphi(y)\\
        \Rightarrow\;&\inf\varphi=\inf\varphi^{\gamma}.
    \end{align*}

    Hence, \textit{(i)} is proved.

    Let $\tilde{x}\in\argmin_{x\in\mathbb{R}^n}\varphi^{\gamma}(x)$, which implies that
    \begin{align}
        \notag
        \Rightarrow\;&\varphi^{\gamma}(\tilde{x})\le\varphi^{\gamma}(x),\;\;\;\;\;\forall x\in\mathbb{R}^n\\\notag
        \Rightarrow\;&\varphi^{\gamma}(\tilde{x})\le\varphi(\mathrm{prox}_{\gamma g}(x)),\;\;\;\;\;\forall x\in\mathbb{R}^n\;\;\;\;\;[\text{By using Proposition \ref{Sandwiching property}}(i)]\\\notag
        \Rightarrow\;&\varphi^{\gamma}(\tilde{x})\le(\varphi\circ\mathrm{prox}_{\gamma g})(x),\;\;\;\;\;\forall x\in\mathbb{R}^n\\\notag
        \Rightarrow\;&(\varphi\circ\mathrm{prox}_{\gamma g})(\tilde{x})\le(\varphi\circ\mathrm{prox}_{\gamma g})(x),\;\;\;\;\;\forall x\in\mathbb{R}^n\;\;\;\;\;[\text{By using Proposition }\ref{Minimization and level-boundedness equivalence}(i)]\\\notag
        \Rightarrow\;&\tilde{x}\in\argmin_{x\in\mathbb{R}^n}(\varphi\circ\mathrm{prox}_{\gamma g})(x)\\\notag
        \Rightarrow\;&\argmin_{x\in\mathbb{R}^n}\varphi^{\gamma}(x)\subseteq\argmin_{x\in\mathbb{R}^n}(\varphi\circ\mathrm{prox}_{\gamma g})(x)\\
        \Rightarrow\;&\argmin\varphi^{\gamma}\subseteq\argmin(\varphi\circ\mathrm{prox}_{\gamma g})\label{Prop 4.3 (ii) 1}.
    \end{align}

    Let us suppose $\tilde{x}\notin \argmin_{x\in\mathbb{R}^n}\varphi^{\gamma}(x)$, which implies that
    \begin{align}
        \notag
        \Rightarrow\;&\exists~ x\in\mathbb{R}^n~\text{s.t. }\varphi^{\gamma}(\tilde{x})>\varphi^{\gamma}(x)\\\notag
        \Rightarrow\;&(\varphi\circ\mathrm{prox}_{\gamma g})(\tilde{x})>\inf \varphi^{\gamma}\;\;\;\;\;[\text{By using (\ref{Sandwiching property})}]\\\notag
        \Rightarrow\;&(\varphi\circ\mathrm{prox}_{\gamma g})(\tilde{x})>\inf \varphi\;\;\;\;\;[\text{By using Proposition }\ref{Minimization and level-boundedness equivalence}(i)]\\\notag
        \Rightarrow\;&(\varphi\circ\mathrm{prox}_{\gamma g})(\tilde{x})>\inf (\varphi\circ\mathrm{prox}_{\gamma g})\\
        \Rightarrow\;&\tilde{x}\notin\argmin_{x\in\mathbb{R}^n}(\varphi\circ\mathrm{prox}_{\gamma g})(x).\label{Prop 4.3 (ii) 2}
    \end{align}
 Using the contrapositive law of logic in (\ref{Prop 4.3 (ii) 2}), we get
    \allowdisplaybreaks{
    \begin{align}\label{Prop 4.3 (ii) 3}
        \notag
        &\text{if }\tilde{x}\in\argmin_{x\in\mathbb{R}^n}(\varphi\circ\mathrm{prox}_{\gamma g})(x)\text{ then }\tilde{x}\in \argmin_{x\in\mathbb{R}^n}\varphi^{\gamma}(x)\\\notag
        \Rightarrow\;&\argmin_{x\in\mathbb{R}^n}(\varphi\circ\mathrm{prox}_{\gamma g})(x)\subseteq\argmin_{x\in\mathbb{R}^n}\varphi^{\gamma}(x)\\
        \Rightarrow\;&\argmin(\varphi\circ\mathrm{prox}_{\gamma g})\subseteq\argmin\varphi^{\gamma}.
    \end{align}}

    Now, from (\ref{Prop 4.3 (ii) 1}) and (\ref{Prop 4.3 (ii) 3}), we get
    \begin{align*}
        &\argmin(\varphi\circ\mathrm{prox}_{\gamma g})=\argmin\varphi^{\gamma}\\
        \Rightarrow\;&\mathrm{prox}_{\gamma g}(\argmin(\varphi\circ\mathrm{prox}_{\gamma g}))=\mathrm{prox}_{\gamma g}(\argmin\varphi^{\gamma})\\
        \Rightarrow\;&\argmin \varphi=\mathrm{prox}_{\gamma g}(\argmin\varphi^{\gamma}).
    \end{align*}

    Hence, \textit{(ii)} is proved.

    Let $\varphi^{\gamma}$ be level bounded and let $y\in\mathrm{lev}_{\le \alpha}\varphi$ for some $\alpha>\inf \varphi.$ Now, we have $y=\mathrm{prox}_{\gamma g}(x),$ i.e., $ x=y+\gamma\nabla g(y).$ Then,
    \begin{align*}
        &y\in\mathrm{lev}_{\le\alpha}\varphi\\
        \Rightarrow\;&\varphi(y)\le\alpha\\
        \Rightarrow\;&\varphi^{\gamma}(x)\le\alpha\;\;\;\;\;[\text{By using Proposition \ref{Sandwiching property}}(i)]\\
        \Rightarrow\;&x\in\mathrm{lev}_{\le\alpha}\varphi^{\gamma}\\
        \Rightarrow\;&(I+\gamma\nabla g)(y)\in\mathrm{lev}_{\le\alpha}\varphi^{\gamma}\\
        \Rightarrow\;&y\in(I+\gamma\nabla g)^{-1}\mathrm{lev}_{\le\alpha}\varphi^{\gamma}\\
        \Rightarrow\;&\mathrm{lev}_{\le\alpha}\varphi\subseteq(I+\gamma\nabla g)^{-1}\mathrm{lev}_{\le\alpha}\varphi^{\gamma}\\
        \Rightarrow\;&\mathrm{lev}_{\le\alpha}\varphi\subseteq\mathrm{prox}_{\gamma g}(\mathrm{lev}_{\le\alpha}\varphi^{\gamma}).
    \end{align*}
    
 Since $\mathrm{prox}_{\gamma g}$ is Lipschitz continuous and $\varphi^{\gamma}$ is level bounded. Hence, $\varphi$ is level bounded.

    Let us suppose that $\varphi^\gamma$ is not level bounded. Then $\exists\;\alpha > \inf \varphi$ and a sequence $\{x^k\}$ satisfying $x^k \in \mathrm{lev}_{\leq \alpha} \varphi^\gamma$ s.t. $\|x^k\| \geq k, \;\forall k \in \mathbb{N}$. Let $y^k = \mathrm{prox}_{\gamma g}(x^k) \ \Rightarrow \ x^k = y^k + \gamma \nabla g(y^k)$ and $z^k = \mathrm{prox}_{\gamma f}(2y^k - x^k - \nabla h(y^k)) = \mathrm{prox}_{\gamma f}(y^k - \nabla g(y^k) - \nabla h(y^k))$.
    
    If $\gamma < \frac{1}{L_g + L_h}$, then by Proposition \ref{Sandwiching property}, $\varphi(z^k) \leq \varphi^\gamma(x^k) \leq \alpha \; \Rightarrow \ z^k \in \mathrm{lev}_{\leq \alpha} \varphi.$

    Now, using Proposition \ref{Sandwiching property}, we say
    \begin{align}\label{Prop 4.3 (iii) 1}
        \notag
        &\alpha - \inf \varphi \geq \varphi^\gamma(x^k) - \inf \varphi \geq \varphi^\gamma(x^k) - \varphi(z^k) \geq \frac{1 - (L_g + L_h)\gamma}{2\gamma} \|z^k - y^k\|^2\\
        \Rightarrow\;&\|z^k - y^k\|^2 \leq \frac{2\gamma(\alpha - \inf \varphi)}{1 - (L_g + L_h)\gamma}.
    \end{align}

 Then, using the triangle inequality of the norm and (\ref{Prop 4.3 (iii) 1}), we get
    \allowdisplaybreaks{
    \begin{align*}
        &\|z^k\| = \|y^k - y^0 + y^0 - y^k + z^k\|\geq \|y^k - y^0\| - \|y^0\| - \|z^k - y^k\|\\
        &\qquad\qquad\qquad\qquad\qquad\qquad\;\;\;\geq \frac{1}{1 + \gamma L_g} \|x^k - x^0\| - \|y^0\| - \sqrt{\frac{2\gamma(\alpha - \inf \varphi)}{1 - (L_g + L_h)\gamma}}\\
        \Rightarrow\;&\|z^k\|\ge \frac{\|x^k\| - \|x^0\|}{1 + \gamma L_g} - \|y^0\| - \sqrt{\frac{2\gamma(\alpha - \inf \varphi)}{1 - (L_g + L_h)\gamma}}\\
        \Rightarrow \;& \|z^k\| \geq \frac{k - \|x^0\|}{1 + \gamma L_g} - \|y^0\| - \sqrt{\frac{2\gamma(\alpha - \inf \varphi)}{1 - (L_g + L_h)\gamma}}\to \infty \ \text{as } k \to \infty\\
        \Rightarrow \;& \varphi \text{ is not level bounded.}
    \end{align*}}

    Now, by the contrapositive law of logic, we say that if $\varphi$ is level bounded, then $\varphi^\gamma$ is level bounded.
    
      Thus, $\varphi$ is level bounded if and only if $\varphi^\gamma$ is level bounded. Hence, \textit{(iii)} is proved.
\end{proof}

Now, the next proposition helps to bound the DYE by the objective function.

\begin{prop}\label{quadratic upper bound}
    Consider that Assumption \ref{Assumption 1} holds. Let $\gamma<\frac{1}{L_g+L_h}$ be fixed, $x,\;\tilde{y}\in\mathbb{R}^n$, and $y=\mathrm{prox}_{\gamma g}x$. Then $\varphi^{\gamma}(x)\le\varphi(\tilde{y})+\frac{1+\gamma(L_g+L_h)}{2\gamma}\|y-\tilde{y}\|^2.$
\end{prop}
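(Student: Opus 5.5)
The plan is to use the variational definition (\ref{DYE_1}) of the DYE and bound it by inserting a well-chosen test point $u$. The natural choice is $u=\tilde{y}$, mimicking the proof of Proposition \ref{Sandwiching property}(i). Writing $\tilde g:=g+h$, this gives
\begin{equation*}
\varphi^{\gamma}(x)\le f(\tilde{y})+\tilde g(y)+\langle \tilde{y}-y,\nabla\tilde g(y)\rangle+\frac{1}{2\gamma}\|\tilde{y}-y\|^2 .
\end{equation*}
All that remains is to trade the linearization of $\tilde g$ at $y$ for the true value $\tilde g(\tilde y)$, at the cost of a quadratic term.

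For this step I use the two-sided descent inequality (the lemma from \cite{bauschke2011convex}) applied separately to $g$ and to $h$, both of which are smooth on all of $\mathbb{R}^n$ by Assumption \ref{Assumption 1}(ii)--(iii). The lower half of that bound reads
\begin{equation*}
g(y)+\langle\tilde y-y,\nabla g(y)\rangle\le g(\tilde y)+\frac{L_g}{2}\|\tilde y-y\|^2,
\end{equation*}
and the analogous inequality holds for $h$ with $L_h$. Adding the two gives
\begin{equation*}
\tilde g(y)+\langle \tilde y-y,\nabla\tilde g(y)\rangle\le \tilde g(\tilde y)+\frac{L_g+L_h}{2}\|\tilde y-y\|^2 .
\end{equation*}

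Substituting this into the first display yields
\begin{equation*}
\varphi^{\gamma}(x)\le f(\tilde y)+g(\tilde y)+h(\tilde y)+\Big(\frac{1}{2\gamma}+\frac{L_g+L_h}{2}\Big)\|y-\tilde y\|^2 ,
\end{equation*}
and $\frac{1}{2\gamma}+\frac{L_g+L_h}{2}=\frac{1+\gamma(L_g+L_h)}{2\gamma}$, which is exactly the claimed bound.

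There is no real obstacle here. The only point needing care is that the minimum in (\ref{DYE_1}) is attained and equals (\ref{DYE}), so that plugging in an arbitrary $u$ gives an upper bound. This follows from the prox-boundedness of $f$ for $\gamma<\frac{1}{L_g+L_h}$, noted in the remark after Assumption \ref{Assumption 1}. The restriction on $\gamma$ plays no other role in the estimate itself. If $\tilde y\notin\mathrm{dom} f$, the inequality holds trivially.
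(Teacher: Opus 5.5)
Your proposal is correct and matches the paper's proof essentially step for step: plug $u=\tilde y$ into \eqref{DYE_1}, apply the descent lemma to $g$ and $h$ separately, and collect $\frac{1}{2\gamma}+\frac{L_g+L_h}{2}=\frac{1+\gamma(L_g+L_h)}{2\gamma}$. One minor point: the minimum in \eqref{DYE_1} need not be attained for this argument, because any test point $u$ already bounds the infimum from above.
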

\begin{proof}
    Since $y=\mathrm{prox}_{\gamma g}x$, from \eqref{DYE_1}, we have
    \begin{equation}\label{DYE_prop}
        \varphi^{\gamma}(x) = \min_{u\in\mathbb{R}^n}\left\{ f(u)+g(y)+h(y) +\langle u-y, \nabla g(y)+\nabla h(y)\rangle + \frac{1}{2\gamma}\|u-y\|^2\right\}.
    \end{equation}

    Now, we plug $u=\tilde{y}$ into \eqref{DYE_prop} and get
    \begin{align*}
        \varphi^{\gamma}(x)&\le f(\tilde{y})+g(y)+h(y)+\langle \tilde{y}-y,\nabla g(y)+\nabla h(y)\rangle+\frac{1}{2\gamma}\|\tilde{y}-y\|^2\\
        &=f(\tilde{y})+[g(y)+\langle\nabla g(y), \tilde{y}-y\rangle]+[h(y)+\langle\nabla h(y),\tilde{y}-y\rangle]+\frac{1}{2\gamma}\|\tilde{y}-y\|^2\\
        &\le f(\tilde{y})+g(\tilde{y})+\frac{L_g}{2}\|\tilde{y}-y\|^2+h(\tilde{y})+\frac{L_h}{2}\|\tilde{y}-y\|^2+\frac{1}{2\gamma}\|\tilde{y}-y\|^2\\
        &\qquad\qquad\qquad\qquad\qquad\qquad\qquad\qquad[\text{By using descent lemma for $g$ and $h$}]\\
        &=\varphi(\tilde{y})+\frac{1+\gamma(L_g+L_h)}{2\gamma}\|\tilde{y}-y\|^2.
    \end{align*}
\end{proof}

\section{FISTA-type DYS}\label{FISTA-type DYS section}
FISTA-type DYS algorithm is used to solve a non-smooth and non-convex optimization problem. This algorithm is based on the DYS framework. In this algorithm, we try to keep the simple structure of the DYS algorithm while adding the acceleration term to improve the convergence speed. Since the FISTA-type DYS algorithm does not require convexity of the objective function, this algorithm is very useful for many practical problems. Even though the DYS algorithm has good convergence properties, it can be very slow to converge in some cases, like ill-posed or poorly scaled problems. To tackle these difficulties, we use the FISTA-type acceleration term. Now, we will discuss the FISTA-type DYS algorithm for the same problem (\ref{Prob 1}) and analyze its convergence properties.

\begin{algorithm}[h]
\caption{FISTA-type DYS}
\begin{algorithmic}[1]

\State \textbf{Input:} $x^0 = w^{-1} \in \mathbb{R}^n$, step size $\gamma > 0$, relaxation parameter $\lambda>0$.
\State $t^0 = 1$

\For{$k = 0,1,2,\dots$}
    \State $y^k = \mathrm{prox}_{\gamma g}(x^k)$
    \State $w^k \in \mathrm{prox}_{\gamma f}\big(2y^k - x^k - \gamma \nabla h(y^k)\big)$
    \State $t^{k+1} = \frac{1 + \sqrt{1 + 4(t^k)^2}}{2}$
    \State $\beta^k = \frac{t^k - 1}{t^{k+1}}$
    \State $z^k = w^k + \beta^k (w^k - w^{k-1})$
    \State $x^{k+1} = x^k + \lambda(z^k - y^k)$
\EndFor

\end{algorithmic}\label{FISTA-DYS}
\end{algorithm}


\subsection{Convergence Analysis for FISTA-type DYS}
To analyze the convergence of our algorithms, we use an approach similar to that of Themelis and Patrinos \cite{themelis2020douglas}. Before analyzing the sufficient decrease property of the FISTA-type DYS algorithm, we discuss the sufficient decrease property for DYS in the next lemma, which is very helpful to prove the sufficient decrease property for FISTA-type DYS.
\begin{lem}[Sufficient decrease for DYS]\label{DYS-SD} Consider that Assumption \ref{Assumption 1} is satisfied and let $(y^k,~z^k,~x^{k+1})\in \text{DYS}(x^k)$. Then for some $C<0$
    $$\varphi^{\gamma}(x^{k+1})\le\varphi^{\gamma}(x^k)+\frac{C}{(1+\gamma L_g)^2}\|x^{k+1}-x^k\|^2,$$
    if the following hold:
    \begin{itemize}
        \item[(i)] $0<\gamma<\frac{1}{L_1},~\frac{4L_2}{L_1+L_2}<\lambda<2\left(1+\frac{\sigma_1}{L_1}\right),~L_1>\frac{L_2-\sigma_1+\sqrt{(L_2-\sigma_1)^2-4L_2\sigma_1}}{2}.$
        \item[(ii)] $0<\gamma<\frac{1}{L_1},~2\gamma L_1\left(1+\frac{\sigma_1}{L_1}\right)<\lambda\le\frac{\sigma_1+2L_2}{\frac{\sigma_1L_1}{2(L_1+\sigma_1)}+\frac{\sigma_2L_2}{2(L_2+\sigma_2)}},~\sigma_1\le-2L_2,~L_1>\max\left\{|\sigma_1|,\frac{\sigma_1\sigma_2}{2L_2+\sigma_2}\right\}.$
        \item[(iii)] $0<\gamma<\frac{1}{L_1+\sigma_1},~2\left(1+\frac{\sigma_1}{L_1}\right)\le\lambda\le\frac{2}{1-\frac{\sigma_1L_1}{(L_1+\sigma_1)^2}-\frac{\sigma_2L_2}{(L_1+\sigma_1)(L_2+\sigma_2)}+\frac{2L_1L_2}{(L_1+\sigma_1)^2}},~0>\sigma_1\ge\frac{\sigma_2L_2}{L_2+\sigma_2},~0<L_1\le\frac{1}{L_2(2L_2+\sigma_2)}(L_2\sigma_1\sigma_2-L_2\sigma_1^2-\sigma_1^2\sigma_2).$
    \end{itemize}

    Here, $\sigma_1=\min\{\sigma_g,~0\},~L_1\ge L_g$ s.t. $L_1+\sigma_1>0$ and $\sigma_2=\min\{\sigma_h,~0\},~L_2\ge L_h$ s.t. $L_2+\sigma_2>0.$
\end{lem}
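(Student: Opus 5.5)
This follows the Themelis--Patrinos route for the Douglas--Rachford envelope in \cite{themelis2020douglas}, now with the extra smooth term $h$. Throughout, write $y=y^k$, $z=z^k$, $x^+=x^{k+1}$, $y^+=\mathrm{prox}_{\gamma g}(x^+)$, and $z^+$ for the next $\mathrm{prox}_{\gamma f}$ point.

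The first step is to bound $\varphi^{\gamma}(x^+)$ from above. Plug $u=z$ into the variational definition \eqref{DYE_1} of $\varphi^{\gamma}(x^+)$:
\[
\varphi^{\gamma}(x^+)\le f(z)+g(y^+)+h(y^+)+\langle z-y^+,\nabla g(y^+)+\nabla h(y^+)\rangle+\tfrac{1}{2\gamma}\|z-y^+\|^2 .
\]
The exact value \eqref{DYE} gives
\[
\varphi^{\gamma}(x)=f(z)+g(y)+h(y)+\langle z-y,\nabla g(y)+\nabla h(y)\rangle+\tfrac{1}{2\gamma}\|z-y\|^2 .
\]
Subtracting, $f(z)$ cancels, so the difference involves only smooth terms. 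This is the point where weak convexity of $f$ is not needed.

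The second step handles the $g$- and $h$-terms. Use Proposition~\ref{Proximal properties of smooth functions}(i), which gives $\nabla g(y)=(x-y)/\gamma$ and $\nabla g(y^+)=(x^+-y^+)/\gamma$. Then use the two-sided hypoconvexity bounds from the Remark: the lower bound with $\sigma_g$ (replaced by $\sigma_1\le 0$) and the upper bound with $L_1$. Apply them to $g(y^+)-g(y)$ and to $h(y^+)-h(y)$, with $\sigma_2$ and $L_2$ for $h$. For the cross term $\langle z-y^+,\nabla h(y^+)\rangle-\langle z-y,\nabla h(y)\rangle$, I would split it as
\[
\langle y-y^+,\nabla h(y)\rangle+\langle z-y^+,\nabla h(y^+)-\nabla h(y)\rangle ,
\]
and bound the second piece using the cocoercivity-type inequality for $L_2$-smooth, $\sigma_2$-hypoconvex functions. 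Written in the $\frac{\sigma L}{L+\sigma}$ form, this inequality is exactly what generates the ratios $\frac{\sigma_1L_1}{L_1+\sigma_1}$ and $\frac{\sigma_2L_2}{L_2+\sigma_2}$ appearing in (ii)--(iii).

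Next, substitute $x^+-x=\lambda(z-y)$. After this substitution every term becomes a quadratic form in the two vectors $y^+-y$ and $x^+-x$ (equivalently $z-y$). Set $\Delta:=y^+-y$. By Proposition~\ref{Proximal properties of smooth functions}(ii),
\[
\langle \Delta,x^+-x\rangle\ge\frac{1}{1+\gamma L_1}\|x^+-x\|^2,\qquad \langle\Delta,x^+-x\rangle\ge(1+\gamma\sigma_1)\|\Delta\|^2 .
\]
The plan is to write $\varphi^{\gamma}(x^+)-\varphi^{\gamma}(x)\le \frac{1}{\gamma}Q(\Delta,x^+-x)$ with $Q$ a quadratic form whose coefficients depend on $\gamma,\lambda,L_1,\sigma_1,L_2,\sigma_2$. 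The goal is then $Q\le -c\|\Delta\|^2$ for some $c>0$, because $\|\Delta\|\ge \|x^+-x\|/(1+\gamma L_g)$ converts this into the stated bound with $C=-c\gamma^{-1}<0$.

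The main obstacle is certifying negativity of $Q$, and this is where the three regimes (i)--(iii) come from. I would follow the case analysis of \cite[Thm.~4.1]{themelis2020douglas}, which is organised by the sign of the coefficient multiplying the inner product $\langle\Delta,x^+-x\rangle$. In case (i), where $\lambda$ is below $2(1+\sigma_1/L_1)$, that coefficient has a favourable sign, and I would use the strong-monotonicity bound to absorb $\|x^+-x\|^2$. In cases (ii)--(iii) the coefficient has the opposite sign; there I would use cocoercivity to replace the inner product by $\|\Delta\|^2$ or $\|x^+-x\|^2$, and check that the resulting scalar coefficient is negative under the stated upper bound on $\lambda$ and the stated constraints on $L_1,\sigma_1,\sigma_2$. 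I expect verifying these scalar inequalities, particularly the roles of the $L_2$ conditions (for example the quadratic condition on $L_1$ in (i)), to be the tedious step. I would treat it by reducing each case to a single-variable polynomial in $\gamma L_1$ and checking its sign on the allowed interval.
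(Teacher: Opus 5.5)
Your first step (take $u=z^k$ in the variational form of $\varphi^\gamma(x^{k+1})$ and subtract the exact value \eqref{DYE}, so that $f(z^k)$ cancels) is the paper's. So is your last step (pass from $\|y^{k+1}-y^k\|^2$ to $\|x^{k+1}-x^k\|^2/(1+\gamma L_g)^2$). The middle, which is the whole content of the lemma, is neither carried out nor correctly set up.

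The paper sets $\Delta=y^{k+1}-y^k$ and $G=\nabla g(y^{k+1})-\nabla g(y^k)$, and then uses three ingredients. The first is the residual identity \eqref{residual}. The second is the lower bound $\rho_g(y^k,y^{k+1})\ge\frac{\sigma_1L_1}{2(L_1+\sigma_1)}\|\Delta\|^2+\frac{1}{2(L_1+\sigma_1)}\|G\|^2$ from \cite[Theorem 2.2]{themelis2020douglas}, together with its analogue for $h$. The third is Cauchy--Schwarz with $\|\nabla h(y^{k+1})-\nabla h(y^k)\|\le L_2\|\Delta\|$ on the leftover term $\langle z^k-y^k,\nabla h(y^{k+1})-\nabla h(y^k)\rangle$. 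This gives the two-term bound \eqref{sufficient decrease 8}. Regimes (i), (ii), (iii) are its Cases 1a, 1b, 2a, split by $\lambda$ versus $2(1+\sigma_1/L_1)$ and by the sign of the $\|G\|^2$-coefficient $\frac{\gamma}{\lambda}-\frac{1}{2(L_1+\sigma_1)}$.

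Your version departs from this in two places:
\begin{itemize}
\item The $h$ cross term contains the third vector $\nabla h(y^+)-\nabla h(y)$, so it is not a quadratic form in $(\Delta,x^+-x)$. Cocoercivity of $\nabla h$ controls only $\langle\Delta,\cdot\rangle$, not $\langle z-y^+,\cdot\rangle$. You still need the Cauchy--Schwarz step, and that step is exactly where the $L_2$-dependent constraints come from.
\item With the plain two-sided bounds you propose for $g$, the coefficient of $\langle\Delta,x^+-x\rangle$ is negative for every $\gamma,\lambda$; it is $-\frac{2}{\lambda\gamma}$ if you expand $g$ at $y^+$. So the sign-based case split you describe never occurs. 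It appears only once the $\rho_g$ bound is used for $g$, which turns that coefficient into $-\frac{2}{\gamma^2}\bigl(\frac{\gamma}{\lambda}-\frac{1}{2(L_1+\sigma_1)}\bigr)$. That same bound is also where the ratio $\frac{\sigma_1L_1}{L_1+\sigma_1}$ comes from, not the inequality for $h$.
\end{itemize}

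More seriously, the scalar verification you postpone cannot succeed in regime (i), because the statement is false there. Take $n=1$, $f\equiv0$, $g=\frac32x^2$, $h=\frac12x^2$ (so $\sigma_1=\sigma_2=0$), $L_1=3$, $L_2=1$, $\gamma=0.3$, $\lambda=1.5$. Then $\gamma<\frac{1}{L_1}$, $\frac{4L_2}{L_1+L_2}=1<\lambda<2$ and $L_1>1$, so (i) holds. But $\varphi^\gamma(x)=-0.4\,y^2$ with $y=x/1.9$, and one DYS step gives $z=-0.2y$, $x^+=0.1y$, $y^+=y/19$. Hence $\varphi^\gamma(x^+)>\varphi^\gamma(x)$ for every $x\neq0$.

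The paper's Case 1a does not certify (i) either. It obtains $\lambda>\frac{4L_2}{L_1+L_2}$ and the lower bound on $L_1$ as consequences of the inequality it needs, not as sufficient conditions for it. In this example, the coefficient of $\|\Delta\|^2$ in \eqref{sufficient decrease 8}, after bounding $\|G\|\le L_1\|\Delta\|$, is about $+1.01$. So a correct proof has to add hypotheses (note $\gamma(L_g+L_h)=1.2>1$ here). It should then read the admissible parameter region off an explicit bound like \eqref{sufficient decrease 8}, rather than try to confirm (i)--(iii) as stated.
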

\begin{proof}
    \justifying
    From (\ref{DYE_1}), for $y^k=\mathrm{prox}_{\gamma g}(x^k)$, we write
    \allowdisplaybreaks{
    \begin{align}\label{sufficient decrease 1}
        \notag
        \varphi^{\gamma}(x^{k+1}) &=  \min_{u\in\mathbb{R}^n}\bigg\{ f(u)+g(y^{k+1})+h(y^{k+1}) +\langle u-y^k, \nabla g(y^{k+1})+\nabla h(y^{k+1})\rangle \\\notag
        &\qquad\qquad+ \frac{1}{2\gamma}\|u-y^{k+1}\|^2\bigg\}\\\notag
        & \leq f(z^k)+g(y^{k+1})+h(y^{k+1})+\langle z^k-y^{k+1},\nabla g(y^{k+1}) + \nabla h(y^{k+1})\rangle\\\notag
        &\qquad+ \frac{1}{2\gamma}\|z^k-y^{k+1}\|^2\\\notag
        & = f(z^k)+g(y^{k+1})+\langle\nabla g(y^{k+1}), z^k-y^{k+1}\rangle+h(y^{k+1})+\langle\nabla h(y^{k+1}), \\\notag
        &\qquad z^k-y^{k+1}\rangle+ \frac{1}{2\gamma}\|z^k-y^{k+1}\|^2\\\notag
        \Rightarrow\; \varphi^{\gamma}(x^{k+1}) & = f(z^k)+g(y^{k+1})+\langle\nabla g(y^{k+1}), z^k-y^{k+1}\rangle+h(y^{k+1})+\langle\nabla h(y^{k+1}), \\
        &\qquad z^k-y^{k+1}\rangle+ \frac{1}{2\gamma}\|z^k-y^{k+1}\|^2.
    \end{align}}

    Now, by using the \cite[Theorem 2.2]{themelis2020douglas}, (\ref{sufficient decrease 1}) becomes
    \allowdisplaybreaks{
    \begin{align}\label{sufficient decrease 2}
        \notag
        \varphi^{\gamma}(x^{k+1}) &\leq f(z^k)+g(y^k)-\rho_g(y^k,y^{k+1})+h(y^k)-\rho_h(y^k,y^{k+1})+\langle z^k-y^k,\\\notag
        &\qquad \nabla g(y^{k+1}) + \nabla h(y^{k+1})\rangle+ \frac{1}{2\gamma}\|z^k-y^{k+1}\|^2\\\notag
        \Rightarrow\; \varphi^{\gamma}(x^{k+1})& \le f(z^k)+g(y^k)+h(y^k)+\langle z^k-y^k,\nabla g(y^k) + \nabla h(y^k)\rangle+ \frac{1}{2\gamma}\|z^k-y^k\|^2\\\notag
        &\qquad-\rho_g(y^k,y^{k+1})-\rho_h(y^k,y^{k+1})-\langle z^k-y^k,\nabla g(y^k) + \nabla h(y^k)\rangle\\
        &\qquad+ \frac{1}{2\gamma}\|z^k-y^{k+1}\|^2- \frac{1}{2\gamma}\|z^k-y^k\|^2.
    \end{align}}

    Substituting (\ref{DYE}) into (\ref{sufficient decrease 2}), we get
    \begin{align*}
        \varphi^{\gamma}(x^{k+1})&\leq \varphi^{\gamma}(x^k)-\rho_g(y^k,y^{k+1})+\langle z^k-y^k,\nabla g(y^{k+1})-\nabla g(y^k)\rangle-\rho_h(y^k,y^{k+1})\\
        &\qquad+\langle z^k-y^k,\nabla h(y^{k+1})-\nabla h(y^k)\rangle + \frac{1}{2\gamma}\|z^k-y^k\|^2 + \frac{1}{2\gamma}\|y^k-y^{k+1}\|^2\\
        &\qquad+\frac{1}{\gamma}\langle z^k-y^k, y^k-y^{k+1}\rangle - \frac{1}{2\gamma}\|z^k-y^k\|^2,
    \end{align*}
    which implies that
    \begin{align}\label{sufficient decrease 3}
        \notag
        \varphi^{\gamma}(x^{k+1})&= \varphi^{\gamma}(x^k)-\rho_g(y^k,y^{k+1})+\langle z^k-y^k,\nabla g(y^{k+1})-\nabla g(y^k)\rangle\\\notag
        &\qquad-\rho_h(y^k,y^{k+1})+\langle z^k-y^k,\nabla h(y^{k+1})-\nabla h(y^k)\rangle+\frac{1}{\gamma}\langle z^k-y^k,\\
        &\qquad y^k-y^{k+1}\rangle+ \frac{1}{2\gamma}\|y^k-y^{k+1}\|^2.
    \end{align}

    Now, from Algorithm \ref{DYS}, we show that
    \begin{equation}\label{residual}
        z^k-y^k=\frac{1}{\lambda}(y^{k+1}-y^k)+\frac{\gamma}{\lambda}(\nabla g(y^{k+1})-\nabla g(y^k)).
    \end{equation}

    Again, substituting (\ref{residual}) in (\ref{sufficient decrease 3}), we get
    \begin{align}\label{sufficient decrease 4}
        \notag
        \varphi^{\gamma}(x^{k+1})&\leq \varphi^{\gamma}(x^k)-\rho_g(y^k,y^{k+1})+\bigg\langle \frac{1}{\lambda}(y^{k+1}-y^k)+\frac{\gamma}{\lambda}(\nabla g(y^{k+1})-\nabla g(y^k)),\\\notag
        &\qquad\nabla g(y^{k+1})-\nabla g(y^k)\bigg\rangle-\rho_h(y^k,y^{k+1})+\langle z^k-y^k,\\\notag
        &\qquad\nabla h(y^{k+1})-\nabla h(y^k)\rangle+ \frac{1}{2\gamma}\|y^k-y^{k+1}\|^2+\frac{1}{\gamma}\bigg\langle \frac{1}{\lambda}(y^{k+1}-y^k)\\\notag
        &\qquad +\frac{\gamma}{\lambda}(\nabla g(y^{k+1})-\nabla g(y^k)),y^k-y^{k+1}\bigg\rangle\\\notag
        \Rightarrow\;\varphi^{\gamma}(x^{k+1})&= \varphi^\gamma(x^k) - \rho_g(y^k, y^{k+1}) + \frac{\gamma}{\lambda} \|\nabla g(y^{k+1}) - \nabla g(y^k)\|^2 - \rho_h(y^k, y^{k+1}) \\
        &\qquad+ \frac{\lambda - 2}{2\lambda\gamma} \|y^k - y^{k+1}\|^2+ \langle z^k - y^k, \nabla h(y^{k+1}) - \nabla h(y^k) \rangle.
    \end{align}
    
    Hence, from (\ref{sufficient decrease 4}), we get
    \begin{align}\label{sufficient decrease 5}
        \notag
        \varphi^{\gamma}(x^{k+1})- \varphi^\gamma(x^k) &\leq  - \rho_g(y^k, y^{k+1}) + \frac{\gamma}{\lambda} \|\nabla g(y^{k+1}) - \nabla g(y^k)\|^2 - \rho_h(y^k, y^{k+1}) \\
        &\qquad+ \frac{\lambda - 2}{2\lambda\gamma} \|y^k - y^{k+1}\|^2+ \langle z^k - y^k, \nabla h(y^{k+1}) - \nabla h(y^k) \rangle.
    \end{align}

    Let us suppose $\sigma_1 = \min\{0,\sigma_g\},~L_1\geq L_g$ such that $L_1+\sigma_1>0.$

    Then, by \cite[Theorem 2.2]{themelis2020douglas}, we get
    \begin{equation}\label{hypoconvex bound g}
        \rho_g(y^k,y^{k+1})=\frac{\sigma_1L_1}{2(L_1+\sigma_1)}\|y^k-y^{k+1}\|^2+\frac{1}{2(L_1+\sigma_1)}\|\nabla g(y^k)-\nabla g(y^{k+1})\|^2.
    \end{equation}

    Again, we substitute (\ref{hypoconvex bound g}) into (\ref{sufficient decrease 5}) and use the Cauchy-Schwarz inequality and get
    {\allowdisplaybreaks
    \begin{align}\label{sufficient decrease 6}
        \notag
        &\varphi^{\gamma}(x^{k+1})- \varphi^\gamma(x^k)\\\notag &\leq \frac{\gamma}{\lambda} \|\nabla g(y^{k+1}) - \nabla g(y^k)\|^2-\frac{\sigma_1L_1}{2(L_1+\sigma_1)}\|y^k-y^{k+1}\|^2 - \rho_h(y^k, y^{k+1})\\\notag
        &\qquad-\frac{1}{2(L_1+\sigma_1)}\|\nabla g(y^k)-\nabla g(y^{k+1})\|^2+ \langle z^k - y^k, \nabla h(y^{k+1}) - \nabla h(y^k) \rangle\\\notag
        &\qquad+ \frac{\lambda - 2}{2\lambda\gamma} \|y^k - y^{k+1}\|^2\\\notag
        &\le\left[\frac{\lambda-2}{2\lambda\gamma}-\frac{\sigma_1L_1}{2(L_1+\sigma_1)}\right]\|y^k-y^{k+1}\|^2 +\left[\frac{\gamma}{\lambda}-\frac{1}{2(L_1+\sigma_1)}\right]\|\nabla g(y^k)-\nabla g(y^{k+1})\|^2\\
        &\qquad- \rho_h(y^k, y^{k+1})+\|z^k-y^k\|\|\nabla h(y^{k+1})-\nabla h(y^k)\|.
    \end{align}}
    
    Now, using the $L_g$-smoothness of $g$, $L_h$-smoothness of $h$, $L_1\ge L_g$ and (\ref{residual}), we get
    {\allowdisplaybreaks
    \begin{align}\label{sufficient decrease 7}
        \notag
        &\varphi^{\gamma}(x^{k+1})- \varphi^\gamma(x^k)\\\notag
        &\le\left[\frac{\lambda-2}{2\lambda\gamma}-\frac{\sigma_1L_1}{2(L_1+\sigma_1)}\right]\|y^k-y^{k+1}\|^2+\left[\frac{\gamma}{\lambda}-\frac{1}{2(L_1+\sigma_1)}\right]\|\nabla g(y^k)-\nabla g(y^{k+1})\|^2\\\notag
        &\qquad- \rho_h(y^k, y^{k+1})+L_h\|y^{k+1}-y^k\|\bigg(\frac{1}{\lambda}\|y^{k+1}-y^k\| +\frac{\gamma}{\lambda}\|\nabla g(y^k)-\nabla g(y^{k+1})\|\bigg)\\\notag
        &\le\left[\frac{\lambda-2}{2\lambda\gamma}-\frac{\sigma_1L_1}{2(L_1+\sigma_1)}\right]\|y^k-y^{k+1}\|^2+\left[\frac{\gamma}{\lambda}-\frac{1}{2(L_1+\sigma_1)}\right]\|\nabla g(y^k)-\nabla g(y^{k+1})\|^2\\\notag
        &\qquad- \rho_h(y^k, y^{k+1})+\frac{L_h}{\lambda}\|y^k-y^{k+1}\|^2+\frac{L_hL_1\gamma}{\lambda}\|y^k-y^{k+1}\|^2\\\notag
        &=\left[\frac{\lambda-2}{2\lambda\gamma}-\frac{\sigma_1L_1}{2(L_1+\sigma_1)}\right]\|y^k-y^{k+1}\|^2 +\left[\frac{\gamma}{\lambda}-\frac{1}{2(L_1+\sigma_1)}\right]\|\nabla g(y^k)-\nabla g(y^{k+1})\|^2\\
        &\qquad- \rho_h(y^k, y^{k+1})+\left[\frac{L_h}{\lambda}+\frac{L_hL_1\gamma}{\lambda}\right]\|y^k-y^{k+1}\|^2.
    \end{align}}

    Again, suppose $\sigma_2=\min\{\sigma_h,0\},~L_2\ge L_h$ to be such that $L_2+\sigma_2>0$.

    Then, again, by \cite[Theorem 2.2]{themelis2020douglas}, we obtain
    \begin{equation}\label{hypoconvex bound h}
        \rho_h(y^k,y^{k+1})=\frac{\sigma_2L_2}{2(L_2+\sigma_2)}\|y^k-y^{k+1}\|^2+\frac{1}{2(L_2+\sigma_2)}\|\nabla g(y^k)-\nabla g(y^{k+1})\|^2.
    \end{equation}
    
    Again, we substitute (\ref{hypoconvex bound h}) into (\ref{sufficient decrease 7}) and get
    {\allowdisplaybreaks
    \begin{align}\label{sufficient decrease 8}
        \notag
        &\varphi^{\gamma}(x^{k+1})- \varphi^\gamma(x^k)\\\notag
        &\le\left[\frac{\lambda-2}{2\lambda\gamma}-\frac{\sigma_1L_1}{2(L_1+\sigma_1)}\right]\|y^k-y^{k+1}\|^2+\left[\frac{\gamma}{\lambda}-\frac{1}{2(L_1+\sigma_1)}\right]\|\nabla g(y^k)-\nabla g(y^{k+1})\|^2\\\notag
        &\qquad-\frac{1}{2(L_2+\sigma_2)}\|\nabla h(y^k)-\nabla h(y^{k+1})\|^2-\frac{\sigma_2L_2}{2(L_2+\sigma_2)}\|y^k-y^{k+1}\|^2\\\notag
        &\qquad+\left[\frac{L_2}{\lambda}+\frac{L_1L_2\gamma}{\lambda}\right]\|y^k-y^{k+1}\|^2\\\notag
        &\Rightarrow\;\varphi^{\gamma}(x^{k+1})- \varphi^\gamma(x^k)\\\notag
        &\qquad\le\bigg[\frac{\lambda-2}{2\lambda\gamma}-\frac{\sigma_1L_1}{2(L_1+\sigma_1)}-\frac{\sigma_2L_2}{2(L_2+\sigma_2)}+\frac{L_2}{\lambda}+\frac{L_1L_2\gamma}{\lambda}\bigg]\|y^k-y^{k+1}\|^2\\
        &\qquad\qquad+\left[\frac{\gamma}{\lambda}-\frac{1}{2(L_1+\sigma_1)}\right]\|\nabla g(y^k)-\nabla g(y^{k+1})\|^2.
    \end{align}}
    \begin{itemize}
        \item \textbf{Case 1:} $0<\lambda<2\left(1+\frac{\sigma_1}{L_1}\right).$
        \begin{itemize}
            \item \textbf{Case 1a:} If $\frac{\gamma}{\lambda}-\frac{1}{2(L_1+\sigma_1)}\ge0\Rightarrow \lambda\le 2\gamma(L_1+\sigma_1)=2\gamma L_1\left(1+\frac{\sigma_1}{L_1}\right)$ and $\lambda<2\left(1+\frac{\sigma_1}{L_1}\right)$, then $2\gamma L_1\left(1+\frac{\sigma_1}{L_1}\right)<2\left(1+\frac{\sigma_1}{L_1}\right)\Rightarrow\gamma L_1<1\Rightarrow \gamma<\frac{1}{L_1}$.

            Now, from (\ref{sufficient decrease 8}), we get
            \begin{align}\label{Case 1a 1}
                \notag
                &\frac{\lambda - 2}{2\lambda\gamma} - \frac{\sigma_1 L_1}{2(L_1 + \sigma_1)}- \frac{\sigma_2 L_2}{2(L_2 + \sigma_2)}+ \frac{L_2}{\lambda}+ \frac{L_1 L_2 \gamma}{\lambda}\\\notag
                &\qquad+ L_1^2 \left[ \frac{\gamma}{\lambda} - \frac{1}{2(L_1 + \sigma_1)} \right]\\\notag
                &< \frac{\frac{\sigma_1}{L_1}}{\lambda\gamma}- \frac{\sigma_1 L_1}{2(L_1 + \sigma_1)}- \frac{\sigma_2 L_2}{2(L_2 + \sigma_2)}+ \frac{2L_2}{\lambda}+ \frac{L_1}{\lambda}- \frac{L_1^2}{2(L_1 + \sigma_1)} \\\notag
                &= \frac{\sigma_1}{\lambda \gamma L_1}- \frac{\sigma_1 L_1}{2(L_1 + \sigma_1)}- \frac{\sigma_2 L_2}{2(L_2 + \sigma_2)}+ \frac{2L_2}{\lambda}+ \frac{L_1}{\lambda}- \frac{L_1^2}{2(L_1 + \sigma_1)} \\
                &< \frac{\sigma_1}{\lambda}- \frac{L_1}{2}- \frac{\sigma_2 L_2}{2(L_2 + \sigma_2)}+ \frac{2L_2}{\lambda}+ \frac{L_1}{\lambda}.
            \end{align}

            Now, to get (\ref{Case 1a 1}) is non-positive, we use $\frac{\sigma_1}{\lambda}- \frac{L_1}{2}- \frac{\sigma_2 L_2}{2(L_2 + \sigma_2)}+ \frac{2L_2}{\lambda}+ \frac{L_1}{\lambda}\le 0,$ which implies that
            \begin{align}\label{Case 1a 2}
                \notag
                & \frac{1}{\lambda}(\sigma_1+L_1+2L_2)\le \frac{1}{2}\left(L_1+\frac{\sigma_2L_2}{L_2+\sigma_2}\right)\\\notag
                \Rightarrow\;& \frac{2L_2}{\lambda} < \frac{1}{2}\left(L_1+\frac{\sigma_2L_2}{L_2+\sigma_2}\right)\;\;\;\;\;[\text{since }L_1+\sigma_1>0]\\\notag
                \Rightarrow\;&\frac{2L_2}{\lambda}< \frac{1}{2}(L_1+\sigma_2)\;\;\;\;\;[\text{since }L_2+\sigma_2\le L_2\text{ and } \sigma_2\le 0]\\\notag
                \Rightarrow\;&\frac{2L_2}{\lambda}< \frac{1}{2}(L_1+L_2)\;\;\;\;\;[\text{since }L_2> \sigma_2]\\
                \Rightarrow\;&\lambda>\frac{4L_2}{L_1+L_2}.
            \end{align}

             Now, we have $\frac{4L_2}{L_1+L_2}<\lambda<2\left(1+\frac{\sigma_1}{L_1}\right),$ which implies that,
             {\allowdisplaybreaks
            \begin{align}\label{Case 1a 3}
                \notag
                \Rightarrow\;&\frac{4L_2}{L_1+L_2}<2\left(1+\frac{\sigma_1}{L_1}\right)\\\notag
                \Rightarrow\;&\frac{2L_2}{L_1+L_2}<\frac{L_1+\sigma_1}{L_1}\\\notag
                \Rightarrow\;&2L_1L_2<L_1^2+L_1L_2+L_1\sigma_1+L_2\sigma_1\\\notag
                \Rightarrow\;&L_1^2-L_1(L_2-\sigma_1)+L_2\sigma_1>0\\\notag
                \Rightarrow\;&\left(L_1-\frac{L_2-\sigma_1+\sqrt{(L_2-\sigma_1)^2-4L_2\sigma_1}}{2}\right)\\
                &\qquad\left(L_1-\frac{L_2-\sigma_1-\sqrt{(L_2-\sigma_1)^2-4L_2\sigma_1}}{2}\right)>0.
            \end{align}}

            Now, $\frac{L_2-\sigma_1-\sqrt{(L_2-\sigma_1)^2-4L_2\sigma_1}}{2}<0$ and $L_1\ge0$.

            So, from (\ref{Case 1a 3}), we say that,
            $L_1-\frac{L_2-\sigma_1+\sqrt{(L_2-\sigma_1)^2-4L_2\sigma_1}}{2}>0$.
            
            Hence, we get $L_1>\frac{L_2-\sigma_1+\sqrt{(L_2-\sigma_1)^2-4L_2\sigma_1}}{2}.$

            \item \textbf{Case 1b:} If $\frac{\gamma}{\lambda}-\frac{1}{2(L_1+\sigma_1)}<0\Rightarrow \lambda> 2\gamma(L_1+\sigma_1)=2\gamma L_1\left(1+\frac{\sigma_1}{L_1}\right)$ and $\lambda<2\left(1+\frac{\sigma_1}{L_1}\right)$, then $2\gamma L_1\left(1+\frac{\sigma_1}{L_1}\right)<2\left(1+\frac{\sigma_1}{L_1}\right)\Rightarrow\gamma L_1<1\Rightarrow \gamma<\frac{1}{L_1}$.

            Now, from (\ref{sufficient decrease 8}), we get
            \begin{align}\label{Case 1b 1}
                \notag
                &\frac{\lambda - 2}{2\lambda\gamma} - \frac{\sigma_1 L_1}{2(L_1 + \sigma_1)}- \frac{\sigma_2 L_2}{2(L_2 + \sigma_2)}+ \frac{L_2}{\lambda}+ \frac{L_1 L_2 \gamma}{\lambda}\\\notag
                &< \frac{\frac{\sigma_1}{L_1}}{\lambda\gamma}- \frac{\sigma_1 L_1}{2(L_1 + \sigma_1)}- \frac{\sigma_2 L_2}{2(L_2 + \sigma_2)}+ \frac{2L_2}{\lambda} \\\notag
                &= \frac{\sigma_1}{\lambda \gamma L_1}- \frac{\sigma_1 L_1}{2(L_1 + \sigma_1)}- \frac{\sigma_2 L_2}{2(L_2 + \sigma_2)}+ \frac{2L_2}{\lambda} \\
                &< \frac{\sigma_1}{\lambda}- \frac{\sigma_1 L_1}{2(L_1 + \sigma_1)}- \frac{\sigma_2 L_2}{2(L_2 + \sigma_2)}+ \frac{2L_2}{\lambda}.
            \end{align}

            Now, to get (\ref{Case 1b 1}) is non-positive, we use $\frac{\sigma_1}{\lambda}- \frac{\sigma_1 L_1}{2(L_1 + \sigma_1)}- \frac{\sigma_2 L_2}{2(L_2 + \sigma_2)}+ \frac{2L_2}{\lambda}\le0$, which implies that,
            \begin{align}\label{Case 1b 2}
            &\frac{1}{\lambda}(\sigma_1+2L_2)\le\frac{\sigma_1 L_1}{2(L_1 + \sigma_1)}+ \frac{\sigma_2 L_2}{2(L_2 + \sigma_2)}.
            \end{align}

            Since, we have $\sigma_1,~\sigma_2\le0,~L_1,~L_2>0$, then the RHS of (\ref{Case 1b 2}) is negative. Because if the RHS is equal to $0$, then both of $\sigma_1$ and $\sigma_2$ should be $0$. Then \eqref{Case 1b 2} implies that $L_2=0$, which is not possible. So, from (\ref{Case 1b 2}), we say that $\sigma_1+2L_2<0\Rightarrow\sigma_1<-2L_2.$

            Then, from (\ref{Case 1b 2}), we get $\frac{1}{\lambda}\ge\frac{1}{\sigma_1+2L_2}\left(\frac{\sigma_1 L_1}{2(L_1 + \sigma_1)}+ \frac{\sigma_2 L_2}{2(L_2 + \sigma_2)}\right)$. This implies that
            \begin{align}\label{Case 1b 3}
                &\lambda\le\frac{\sigma_1+2L_2}{\frac{\sigma_1 L_1}{2(L_1 + \sigma_1)}+ \frac{\sigma_2 L_2}{2(L_2 + \sigma_2)}}.
            \end{align}

            Now,
            {\allowdisplaybreaks
            \begin{align}\label{Case 1b 4}
                \notag
                &2\left(1+\frac{\sigma_1}{L_1}\right)-\frac{\sigma_1+2L_2}{\frac{\sigma_1 L_1}{2(L_1 + \sigma_1)}+ \frac{\sigma_2 L_2}{2(L_2 + \sigma_2)}}>0\\\notag
                \Rightarrow\;&2\left(1+\frac{\sigma_1}{L_1}\right)\left(\frac{\sigma_1 L_1}{2(L_1 + \sigma_1)}+ \frac{\sigma_2 L_2}{2(L_2 + \sigma_2)}\right)-\sigma_1-2L_2<0\\\notag
                &\qquad\qquad\qquad\qquad\left[\text{since }\frac{\sigma_1 L_1}{2(L_1 + \sigma_1)}+ \frac{\sigma_2 L_2}{2(L_2 + \sigma_2)}<0\right]\\\notag
                \Rightarrow\;&\frac{L_1+\sigma_1}{L_1}\left(\frac{\sigma_1 L_1}{L_1 + \sigma_1}+ \frac{\sigma_2 L_2}{L_2 + \sigma_2}\right)-\sigma_1-2L_2<0\\\notag
                \Rightarrow\;&\sigma_1+\frac{\sigma_2L_2(L_1+\sigma_1)}{L_1(L_2+\sigma_2)}-\sigma_1-2L_2<0\\\notag
                \Rightarrow\;&\sigma_2L_2(L_1+\sigma_1)-2L_1L_2(L_2+\sigma_2)<0\\\notag
                \Rightarrow\;&L_1L_2\sigma_2+L_2\sigma_1\sigma_2-2L_1L_2^2-2L_1L_2\sigma_2<0\\\notag
                \Rightarrow\;&L_2\sigma_1\sigma_2-2L_1L_2^2-L_1L_2\sigma_2<0\\\notag
                \Rightarrow\;&\sigma_1\sigma_2-2L_1L_2-L_1\sigma_2<0\\\notag
                \Rightarrow\;&L_1(2L_2+\sigma_2)>\sigma_1\sigma_2\\
                \Rightarrow\;&L_1>\frac{\sigma_1\sigma_2}{2L_2+\sigma_2}.
            \end{align}}
        \end{itemize}
    \end{itemize}
    \begin{itemize}
        \item \textbf{Case 2:} $2\left(1+\frac{\sigma_1}{L_1}\right)\le\lambda<2.$
        \begin{itemize}
            \item \textbf{Case 2a:} If $\frac{\gamma}{\lambda}-\frac{1}{2(L_1+\sigma_1)}\le0\Rightarrow \gamma\le \frac{\lambda}{2(L_1+\sigma_1)}$ and $\lambda<2$, then $\gamma<\frac{1}{L_1+\sigma_1}.$
            
            Now, from (\ref{sufficient decrease 8}), we say that,
            \allowdisplaybreaks{
            \begin{align}\label{Case 2a 1}
                \notag
                &\frac{\lambda - 2}{2\lambda\gamma} - \frac{\sigma_1 L_1}{2(L_1 + \sigma_1)}- \frac{\sigma_2 L_2}{2(L_2 + \sigma_2)}+ \frac{L_2}{\lambda}+ \frac{L_1 L_2 \gamma}{\lambda}\\\notag
                &\le \frac{(\lambda-2)(L_1+\sigma_1)}{2\lambda}- \frac{\sigma_1 L_1}{2(L_1 + \sigma_1)}- \frac{\sigma_2 L_2}{2(L_2 + \sigma_2)} + \frac{L_2}{2\left(1+\frac{\sigma_1}{L_1}\right)} \\\notag
                &\qquad+ \frac{L_1L_2}{2(L_1+\sigma_1)}\\
                &=\frac{(\lambda-2)(L_1+\sigma_1)}{2\lambda}- \frac{\sigma_1 L_1}{2(L_1 + \sigma_1)}- \frac{\sigma_2 L_2}{2(L_2 + \sigma_2)} + \frac{L_1L_2}{L_1+\sigma_1}.
            \end{align}}

            Now, to get (\ref{Case 2a 1}) is non-positive, we use $\frac{(\lambda-2)(L_1+\sigma_1)}{2\lambda}- \frac{\sigma_1 L_1}{2(L_1 + \sigma_1)}- \frac{\sigma_2 L_2}{2(L_2 + \sigma_2)} + \frac{L_1L_2}{L_1+\sigma_1}\le0$, which implies that,
            \begin{align}\label{Case 2a 2}
                \notag
                &\frac{\lambda-2}{\lambda}\le\frac{\sigma_1L_1}{(L_1+\sigma_1)^2}+\frac{\sigma_2L_2}{(L_1+\sigma_1)(L_2+\sigma_2)}-\frac{2L_1L_2}{(L_1+\sigma_1)^2}=M\;(\text{say})\\\notag
                &\qquad\qquad\qquad\qquad\qquad\qquad\qquad\qquad\qquad\qquad\qquad\qquad[\text{Here }M<0]\\\notag
                \Rightarrow\;&1-\frac{2}{\lambda}\le M\\
                \Rightarrow\;&\lambda\le\frac{2}{1-M}.
            \end{align}

            We know, $\frac{2}{1-M}<2.$

            Now, $\frac{2}{1-M}-2\left(1+\frac{\sigma_1}{L_1}\right)\ge0$ gives that,
            \allowdisplaybreaks{
            \begin{align}\label{Case 2a 3}
                \notag
                &\frac{2}{1-\frac{\sigma_1L_1}{(L_1+\sigma_1)^2}-\frac{\sigma_2L_2}{(L_1+\sigma_1)(L_2+\sigma_2)}+\frac{2L_1L_2}{(L_1+\sigma_1)^2}}-2\left(1+\frac{\sigma_1}{L_1}\right)\ge0\\\notag
                \Rightarrow\;&L_1(L_1+\sigma_1)(L_2+\sigma_2)-(L_1+\sigma_1)^2(L_2+\sigma_2)+\sigma_1L_1(L_2+\sigma_2)\\\notag
                &\qquad+\sigma_2L_2(L_1+\sigma_1)-2L_1L_2(L_2+\sigma_2)\ge0\\\notag
                \Rightarrow\;&L_1^2L_2+L_1^2\sigma_2+2L_1L_2\sigma_1+2L_1\sigma_1\sigma_2-L_1^2L_2-L_1^2\sigma_2-2L_1L_2\sigma_1\\\notag
                &\qquad-2L_1\sigma_1\sigma_2-L_2\sigma_1^2-\sigma_1^2\sigma_2-L_1L_2\sigma_2+L_2\sigma_1\sigma_2-2L_1L_2^2\ge0\\
                \Rightarrow\;&2L_1L_2^2+L_1L_2\sigma_2\le L_2\sigma_1\sigma_2-\sigma_1^2(L_2+\sigma_2).
            \end{align}}

            Using $L_1,~L_2>0$ and (\ref{Case 2a 3}), we say
            \begin{equation}\label{Case 2a 4}
                L_2\sigma_1\sigma_2-\sigma_1^2(L_2+\sigma_2)\ge0.
            \end{equation}

            If $\sigma_1=0$, then from (\ref{Case 2a 3}) either $L_1=0$ or $L_2=0$, which is not possible since $L_1+\sigma_1>0,~L_2+\sigma_2>0.$ So, $\sigma_1<0.$

            Then, from (\ref{Case 2a 4}), we get
            \begin{align}\label{Case 2a 5}
                \notag
                &L_2\sigma_2-\sigma_1(L_2+\sigma_2)\le0\\\notag
                \Rightarrow\;&\sigma_1(L_2+\sigma_2)\ge L_2\sigma_2\\
                \Rightarrow\;&\sigma_1\ge\frac{L_2\sigma_2}{L_2+\sigma_2}.
            \end{align}
Now, from (\ref{Case 2a 3}), we get
            \begin{align}\label{Case 2a 6}
                \notag
                &2L_1L_2^2+L_1L_2\sigma_2\le L_2\sigma_1\sigma_2-\sigma_1^2(L_2+\sigma_2)\\
                \Rightarrow\;&L_1\le \frac{1}{L_2(2L_2+\sigma_2)}(L_2\sigma_1\sigma_2-L_2\sigma_1^2-\sigma_1^2\sigma_2).
            \end{align}
        \end{itemize}
    \end{itemize}

    Hence, we show 
    \begin{align}\label{SD}
        \notag
        \varphi^{\gamma}(x^{k+1})&\le\varphi^{\gamma}(x^k)+C\|y^{k+1}-y^k\|^2\;\;\;\;\;\text{where }C<0\\
        &=\varphi^{\gamma}(x^k)+C\|\mathrm{prox}_{\gamma g}(x^{k+1})-\mathrm{prox}_{\gamma g}(x^k)\|^2\;\;\;\;\;[\text{since }y^k=\mathrm{prox}_{\gamma g}(x^k)].
    \end{align}
    
    Now, using $\frac{1}{1+\gamma L_g}$-strong monotonicity of $\mathrm{prox}_{\gamma g}$ in (\ref{SD}), we get
    \begin{align}\label{Sufficient Decrease}
        \varphi^{\gamma}(x^{k+1})&\le\varphi^{\gamma}(x^k)+\frac{C}{(1+\gamma L_g)^2}\|x^{k+1}-x^k\|^2\;\;\;\;\;\text{where }C<0.
    \end{align}
    
    Hence, we have proved.
\end{proof}

Now, the next lemma establishes an upper bounded property of the FISTA-type acceleration term, which depends on the sequence $\{t^k\}$, which is used in the proof of Theorem \ref{FISTA_SD_TH}.

\begin{lem}\label{t^k}
    In Algorithm \ref{FISTA-DYS}, we have $t^{k+1}=\frac{1+\sqrt{1+4(t^k)^2}}{2},\;\forall k\ge1$ and $t^0=1$. Then the sequence $t^k$ is strictly increasing and $\frac{t^k-1}{t^{k+1}}<1.$
\end{lem}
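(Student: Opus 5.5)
Both claims follow from elementary estimates on the recursion $t^{k+1}=\frac{1+\sqrt{1+4(t^k)^2}}{2}$, so I will argue directly from it. First, a simple induction shows $t^k\ge 1$ for all $k$. The base case is $t^0=1$. If $t^k\ge 1$, then
$$t^{k+1}=\frac{1+\sqrt{1+4(t^k)^2}}{2}\ge\frac{1+\sqrt5}{2}>1.$$
In particular every $t^k$ is positive, so $\beta^k=\frac{t^k-1}{t^{k+1}}$ is well defined and nonnegative.

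For strict monotonicity, I compare $t^{k+1}$ with $t^k$ directly. Since $\sqrt{1+4(t^k)^2}>\sqrt{4(t^k)^2}=2t^k$ (using $t^k>0$), we get
$$t^{k+1}>\frac{1+2t^k}{2}=t^k+\frac12>t^k.$$
This holds for every $k\ge 0$, so $\{t^k\}$ is strictly increasing. As a by-product, $t^{k+1}-t^k>\frac12$ gives the growth bound $t^k\ge 1+\frac{k}{2}$. That bound is not needed here, but it may be useful later when Lemma \ref{min order lemma} is applied.

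For the bound on the extrapolation parameter, I use positivity of $t^{k+1}$ together with monotonicity:
$$t^k-1<t^k<t^{k+1},$$
and dividing by $t^{k+1}>0$ yields $\frac{t^k-1}{t^{k+1}}<1$. It is also worth recording $0\le\beta^k<1$, since that two-sided form is presumably what the sufficient-decrease argument for Algorithm \ref{FISTA-DYS} will use.

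There is no genuine obstacle in this argument. The only point to handle carefully is establishing positivity of $t^k$ before taking square roots or dividing by $t^{k+1}$, and the induction in the first step covers that. The lemma's phrase ``$\forall k\ge1$'' should be read as covering the update from $t^0$ onward, consistent with Algorithm \ref{FISTA-DYS}.
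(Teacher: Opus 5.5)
Your proof is correct and follows the same route as the paper: the bound $\sqrt{1+4(t^k)^2}>2t^k$ gives $t^{k+1}-t^k>\frac12$, and then $t^k-1<t^k<t^{k+1}$ yields $\frac{t^k-1}{t^{k+1}}<1$. Your induction showing $t^k\ge 1$ (hence $t^{k+1}>0$ and $\beta^k\ge 0$) spells out a positivity step that the paper leaves implicit.
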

\begin{proof}
    We have $t^{k+1}=\frac{1+\sqrt{1+4(t^k)^2}}{2},\;\forall k\ge0$ and $t^0=1$. If $k=0$, then $t^1=\frac{1+\sqrt{5}}{2}>1=t^0.$ Now, $\forall k>0$
    \begin{align*}
        t^{k+1}-t^k&=\frac{1+\sqrt{1+4(t^k)^2}}{2}-t^k\\
        &=\frac{1+\sqrt{1+4(t^k)^2}-2t^k}{2}\\
        &>\frac{1+2t^k-2t^k}{2}>\frac{1}{2}>0.
    \end{align*}
    
    So, $t^k$ is strictly increasing and $t^{k+1}>t^k>t^k-1\;\Rightarrow\;\frac{t^k-1}{t^{k+1}}<1.$ 
\end{proof}

Now, using Lemma \ref{DYS-SD} and Lemma \ref{t^k}, we derive the corresponding sufficient decrease property for Algorithm \ref{FISTA-DYS} under some conditions, which ensures that Algorithm \ref{FISTA-DYS} decreases in every step asymptotically.

\begin{thm}[Sufficient Decrease for FISTA-type DYS]\label{FISTA_SD_TH}
    Consider that Assumption \ref{Assumption 1} is satisfied, $\gamma<\frac{1}{\beta_f}$ and consider $(y^k,~z^k,~x^{k+1})\in \text{FISTA-DYS}(x^k)$. Then for $C<0$
    $$\varphi^{\gamma}(x^{k+1})\le\varphi^{\gamma}(x^k)+\frac{C}{(1+\gamma L_g)^2}\|x^{k+1}-x^k\|^2,$$
 if the following hold:
    \begin{itemize}
        \item[(i)] $0<\gamma<\frac{1}{L_1},~\frac{4L_2}{L_1+L_2}<\lambda<2\left(1+\frac{\sigma_1}{L_1}\right),~L_1>\frac{L_2-\sigma_1+\sqrt{(L_2-\sigma_1)^2-4L_2\sigma_1}}{2}.$
        \item[(ii)] $0<\gamma<\frac{1}{L_1},~2\gamma L_1\left(1+\frac{\sigma_1}{L_1}\right)<\lambda\le\frac{\sigma_1+2L_2}{\frac{\sigma_1L_1}{2(L_1+\sigma_1)}+\frac{\sigma_2L_2}{2(L_2+\sigma_2)}},~\sigma_1\le-2L_2,~L_1>\max\left\{|\sigma_1|,\frac{\sigma_1\sigma_2}{2L_2+\sigma_2}\right\}.$
        \item[(iii)] $0<\gamma<\frac{1}{L_1+\sigma_1},~2\left(1+\frac{\sigma_1}{L_1}\right)\le\lambda\le\frac{2}{1-\frac{\sigma_1L_1}{(L_1+\sigma_1)^2}-\frac{\sigma_2L_2}{(L_1+\sigma_1)(L_2+\sigma_2)}+\frac{2L_1L_2}{(L_1+\sigma_1)^2}},~0>\sigma_1\ge\frac{\sigma_2L_2}{L_2+\sigma_2},~0<L_1\le\frac{1}{L_2(2L_2+\sigma_2)}(L_2\sigma_1\sigma_2-L_2\sigma_1^2-\sigma_1^2\sigma_2).$
    \end{itemize}

    Here, $\sigma_1=\min\{\sigma_g,~0\},~L_1\ge L_g$ s.t. $L_1+\sigma_1>0$ and $\sigma_2=\min\{\sigma_h,~0\},~L_2\ge L_h$ s.t. $L_2+\sigma_2>0.$
\end{thm}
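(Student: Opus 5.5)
We will prove the inequality by repeating the argument of Lemma \ref{DYS-SD}, with $w^k$ playing the role of the DYS point and the extrapolation treated as a perturbation. The FISTA step is $z^k = w^k + \beta^k(w^k-w^{k-1})$, where $w^k\in\mathrm{prox}_{\gamma f}(2y^k-x^k-\gamma\nabla h(y^k))$ is an exact minimizer in \eqref{DYE_1}. Hence
$$\varphi^\gamma(x^k)=f(w^k)+g(y^k)+h(y^k)+\langle w^k-y^k,\nabla g(y^k)+\nabla h(y^k)\rangle+\tfrac{1}{2\gamma}\|w^k-y^k\|^2.$$
Whereas $z^k$ is a minimizer in the DYS case, here it is not. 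The assumption $\gamma<1/\beta_f$ makes the inner objective in \eqref{DYE_1}, namely $u\mapsto f(u)+\langle u-y^k,\nabla(g+h)(y^k)\rangle+\frac{1}{2\gamma}\|u-y^k\|^2$, $(\frac1\gamma-\beta_f)$-strongly convex. This gives the quadratic growth estimate
$$\Phi_k(z^k)\le \varphi^\gamma(x^k)+\Big(\tfrac{1}{2\gamma}+\tfrac{\beta_f}{2}\Big)\|z^k-w^k\|^2 \quad\text{(upper side via weak convexity)},$$
together with a matching lower-side bound. This is the key new ingredient compared with Lemma \ref{DYS-SD}.

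First, we evaluate $\varphi^\gamma(x^{k+1})$ by plugging $u=z^k$ into \eqref{DYE_1} at $x^{k+1}$, exactly as in \eqref{sufficient decrease 1}. We then use \cite[Theorem 2.2]{themelis2020douglas} to move $g,h$ from $y^{k+1}$ to $y^k$, as in \eqref{sufficient decrease 2}–\eqref{sufficient decrease 3}. This bounds $\varphi^\gamma(x^{k+1})$ by $\Phi_k(z^k)$ plus the cross terms of \eqref{sufficient decrease 3}, with $z^k-y^k$ in place of the DYS residual. Since $x^{k+1}=x^k+\lambda(z^k-y^k)$ still holds, identity \eqref{residual} is valid verbatim for the FISTA iterates. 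Repeating \eqref{sufficient decrease 4}–\eqref{sufficient decrease 8} and the case analysis (i)–(iii) therefore yields
$$\varphi^\gamma(x^{k+1})\le\Phi_k(z^k)+C'\|y^{k+1}-y^k\|^2,$$
with some $C'<0$ under the same parameter regimes. Combining this with the growth estimate, we get
$$\varphi^\gamma(x^{k+1})\le \varphi^\gamma(x^k)+C'\|y^{k+1}-y^k\|^2+c_\gamma(\beta^k)^2\|w^k-w^{k-1}\|^2,$$
where $c_\gamma=\frac{1+\gamma\beta_f}{2\gamma}$.

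The main obstacle is absorbing the extrapolation term $c_\gamma(\beta^k)^2\|w^k-w^{k-1}\|^2$ into the negative term. By Lemma \ref{t^k}, $\beta^k<1$, so this term is bounded by $c_\gamma\|w^k-w^{k-1}\|^2$. We will try to relate $\|w^k-w^{k-1}\|$ to $\|x^{k+1}-x^k\|$ through two facts. The map $x\mapsto w$ is Lipschitz, since $\mathrm{prox}_{\gamma f}$ is $\frac{1}{1-\gamma\beta_f}$-Lipschitz for $\gamma<1/\beta_f$ and $\mathrm{prox}_{\gamma g}$ is Lipschitz by Proposition \ref{Proximal properties of smooth functions}. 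Also, $x^{k+1}-x^k=\lambda(z^k-y^k)$. If this direct route does not close, the fallback is to strengthen $C$ by shrinking the admissible $\gamma,\lambda$ slightly inside regimes (i)–(iii), so that the margin $|C'|$ dominates $c_\gamma$ times the Lipschitz constants. If even that fails, we will prove the inequality asymptotically, with the extrapolation contribution summable and absorbed in a Lyapunov-type combination.

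Finally, as in \eqref{SD}–\eqref{Sufficient Decrease}, we pass from $\|y^{k+1}-y^k\|^2$ to $\|x^{k+1}-x^k\|^2$ using the $\frac{1}{1+\gamma L_g}$-strong monotonicity of $\mathrm{prox}_{\gamma g}$. This produces the stated factor $\frac{C}{(1+\gamma L_g)^2}$ with $C<0$.
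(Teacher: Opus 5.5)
\textbf{The step that fails is your key new ingredient.} The inner function $\Phi_k(u)=f(u)+g(y^k)+h(y^k)+\langle u-y^k,\nabla g(y^k)+\nabla h(y^k)\rangle+\frac{1}{2\gamma}\|u-y^k\|^2$ is $\bigl(\frac{1}{\gamma}-\beta_f\bigr)$-strongly convex with minimizer $w^k$. That gives only the \emph{lower} bound $\Phi_k(z^k)\ge\varphi^\gamma(x^k)+\frac{1-\gamma\beta_f}{2\gamma}\|z^k-w^k\|^2$.

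An upper bound of order $\|z^k-w^k\|^2$ is a descent-lemma inequality for $f$, so it needs $\nabla f$ to be Lipschitz. Assumption~\ref{Assumption 1} gives only weak convexity. Two cases show the failure:
\begin{itemize}
\item For $f=\iota_1\|\cdot\|_1-\frac{\iota_2}{2}\|\cdot\|_F^2$ (the paper's own test problem), the excess $\Phi_k(z^k)-\Phi_k(w^k)$ can be linear in $\|z^k-w^k\|$.
\item For $f=\delta_S$ with $S$ closed and convex (admissible, $\beta_f=0$), the extrapolated point $z^k=(1+\beta^k)w^k-\beta^k w^{k-1}$ can leave $S$; take $S=[0,\infty)$ with $w^{k-1}>0=w^k$. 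Then $f(z^k)=+\infty$, and testing \eqref{DYE_1} at $x^{k+1}$ with $u=z^k$ yields nothing.
\end{itemize}
The paper instead tests with $u=w^k\in\mathrm{dom}\,f$ and moves the momentum into the residual identity \eqref{residual_fista}, $w^k-y^k=\frac{1}{\lambda}(x^{k+1}-x^k)-\beta^k(w^k-w^{k-1})$. The extrapolation then enters only through a cross term $-\beta^k\langle w^k-w^{k-1},\cdot\,\rangle$. That term is split by Young's inequality together with $\|w^k-w^{k-1}\|\le\frac{1+\gamma(L_g+L_h)}{1-\gamma\beta_f}\|y^k-y^{k-1}\|$.

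\textbf{The absorption step cannot close either, even granting that bound, and the obstruction is not about constants.} The leftover $c_\gamma(\beta^k)^2\|w^k-w^{k-1}\|^2$ is controlled only by the \emph{previous} displacement: Lipschitz continuity of $x\mapsto w$ gives $\|x^k-x^{k-1}\|$, not $\|x^{k+1}-x^k\|$. Meanwhile $z^k-y^k=\frac{1}{\lambda}(x^{k+1}-x^k)$ can be small when $w^k-w^{k-1}$ is not. So shrinking $\gamma$ and $\lambda$ inside (i)--(iii) cannot help.

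In fact the one-step inequality is false as stated. Take $n=1$, $f\equiv h\equiv 0$, $g=\frac12(\cdot)^2$, $\gamma=0.1$, $\lambda=1$. These satisfy Assumption~\ref{Assumption 1} with $\beta_f=0$, and condition (i) with $L_1=1$, $L_2=0.1$, $\sigma_1=\sigma_2=0$. Then:
\begin{itemize}
\item $w^k=0.9\,y^k$ and $\varphi^\gamma(x)=0.45\,\bigl(\mathrm{prox}_{\gamma g}(x)\bigr)^2$.
\item The iteration reduces to $1.1\,y^{k+1}=y^k+0.9\,\beta^k(y^k-y^{k-1})$ for $k\ge1$.
\item At any sign change $y^{k-1}>0>y^k$, this recursion forces $|y^{k+1}|>|y^k|$, because $\beta^k>\frac19$ for $k\ge1$.
\item Sign changes do occur: from $x^0=1.1$ one finds $y^{12}\approx 0.0104$, $y^{13}\approx-0.0115$, $y^{14}\approx-0.0252$.
\end{itemize}
Hence $\varphi^\gamma(x^{k+1})>\varphi^\gamma(x^k)$ with $x^{k+1}\neq x^k$ for $k=12,13$. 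This is the familiar non-monotonicity of FISTA.

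What can be proved is a summed or Lyapunov-type statement, such as $\sum_k\|y^{k+1}-y^k\|^2<\infty$, under an extra condition on $|C|$ relative to the Lipschitz constants. That is all the paper's summation argument actually delivers. Its final return to a per-step inequality, by calling the momentum terms ``summable errors'', is not justified either. Your last fallback aims at the right kind of statement, but it is not the theorem, and neither of your first two routes can reach the theorem.
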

\begin{proof}
 From Algorithm \ref{FISTA-DYS} and (\ref{sufficient decrease 3}), we get
    \begin{align}\label{F-SD 1}
        \notag
        \varphi^{\gamma}(x^{k+1})&\le\varphi^{\gamma}(x^k)-\rho_g(y^k,y^{k+1})+\langle w^k-y^k,\nabla g(y^{k+1})-\nabla g(y^k)\rangle-\rho_h(y^k,y^{k+1})\\\notag
        &\qquad+\langle w^k-y^k,\nabla h(y^{k+1})-\nabla h(y^k)\rangle+\frac{1}{2\gamma}\|y^k-y^{k+1}\|^2\\
        &\qquad+\frac{1}{\gamma}\langle w^k-y^k,y^k-y^{k+1}\rangle,
    \end{align}
 where
    \begin{align}\label{F-Envelope}
        \varphi^{\gamma}(x^k)&=f(w^k)+g(y^k)+h(y^k)+\langle w^k-y^k,\nabla g(y^k)+\nabla h(y^k)\rangle+\frac{1}{2\gamma}\|w^k-y^k\|^2.
    \end{align}

    Again, from the Algorithm \ref{FISTA-DYS}, we get
    \allowdisplaybreaks{
    \begin{align}\label{residual_fista}
        \notag
        &x^{k+1}=x^k+\lambda(z^k-y^k)\\\notag
        \Rightarrow\;&x^{k+1}=x^k+\lambda\left(w^k+\frac{t^k-1}{t^{k+1}}(w^k-w^{k-1})-y^k\right)\\\notag
        \Rightarrow\;&w^k-y^k=\frac{1}{\lambda}(x^{k+1}-x^k)-\frac{t^k-1}{t^{k+1}}(w^k-w^{k-1})\\
        \Rightarrow\;&w^k-y^k=\frac{1}{\lambda}(y^{k+1}-y^k)+\frac{\gamma}{\lambda}(\nabla g(y^{k+1})-\nabla g(y^k))-\frac{t^k-1}{t^{k+1}}(w^k-w^{k-1})\\\notag
        &\qquad\qquad\qquad\qquad\qquad[\text{since }y^k=\mathrm{prox}_{\gamma g}(x^k)\;\text{i.e., }x^k=y^k+\gamma\nabla g(y^k)].
    \end{align}}

    Now, substituting (\ref{residual_fista}) in (\ref{F-SD 1}), we get
    {\allowdisplaybreaks
    \begin{align}
        \notag
        \varphi^{\gamma}(x^{k+1})&\le\varphi^{\gamma}(x^k)-\rho_g(y^k,y^{k+1})-\rho_h(y^k,y^{k+1})+\frac{1}{2\gamma}\|y^k-y^{k+1}\|^2\\\notag
        &\qquad+\bigg\langle\frac{1}{\lambda}(y^{k+1}-y^k)+\frac{\gamma}{\lambda}(\nabla g(y^{k+1})-\nabla g(y^k))-\frac{t^k-1}{t^{k+1}}(w^k-w^{k-1}),\\\notag
        &\qquad\nabla g(y^{k+1})-\nabla g(y^k)\bigg\rangle+\bigg\langle\frac{1}{\lambda}(y^{k+1}-y^k)+\frac{\gamma}{\lambda}(\nabla g(y^{k+1})-\nabla g(y^k))\\\notag
        &\qquad-\frac{t^k-1}{t^{k+1}}(w^k-w^{k-1}),\nabla h(y^{k+1})-\nabla h(y^k)\bigg\rangle+\bigg\langle\frac{1}{\lambda}(y^{k+1}-y^k)\\\notag
        &\qquad+\frac{\gamma}{\lambda}(\nabla g(y^{k+1})-\nabla g(y^k))-\frac{t^k-1}{t^{k+1}}(w^k-w^{k-1}),y^k-y^{k+1}\bigg\rangle\\\notag
        \Rightarrow\varphi^{\gamma}(x^{k+1})&\le \varphi^{\gamma}(x^k)-\rho_g(y^k,y^{k+1})+\bigg\langle \frac{1}{\lambda}(y^{k+1}-y^k)+\frac{\gamma}{\lambda}(\nabla g(y^{k+1})-\nabla g(y^k)),\\\notag
        &\qquad\nabla g(y^{k+1})-\nabla g(y^k)\bigg\rangle-\rho_h(y^k,y^{k+1})+\bigg\langle \frac{1}{\lambda}(y^{k+1}-y^k)\\\notag
        &\qquad+\frac{\gamma}{\lambda}(\nabla g(y^{k+1})-\nabla g(y^k)),\nabla h(y^{k+1})-\nabla h(y^k)\bigg\rangle+ \frac{1}{2\gamma}\|y^k-y^{k+1}\|^2\\\notag
        &\qquad+\frac{1}{\gamma}\left\langle \frac{1}{\lambda}(y^{k+1}-y^k)+\frac{\gamma}{\lambda}(\nabla g(y^{k+1})-\nabla g(y^k)), y^k-y^{k+1}\right\rangle\\\notag
        &\qquad-\frac{t^k-1}{t^{k+1}}\langle w^k-w^{k-1},\nabla g(y^{k+1})-\nabla g(y^k)+\nabla h(y^{k+1})-\nabla h(y^k)\\
        &\qquad+y^k-y^{k+1}\rangle.\label{F-SD 2}
    \end{align}}

    In Lemma \ref{DYS-SD}, we show that (\ref{sufficient decrease 4}) implies (\ref{SD}) under some conditions, which are discussed. Using this concept, we say, under the same conditions as discussed in Lemma \ref{DYS-SD}, that
    \begin{align}
        \notag
        \varphi^{\gamma}(x^{k+1})-\varphi^{\gamma}(x^k)&\le C\|y^{k+1}-y^k\|^2-\frac{t^k-1}{t^{k+1}}\langle w^k-w^{k-1},\nabla g(y^{k+1})-\nabla g(y^k)\\\notag
        &\qquad+\nabla h(y^{k+1})-\nabla h(y^k)+y^k-y^{k+1}\rangle,\qquad\text{where }C<0
    \end{align}
    \begin{align}\label{F-SD 3}
        \notag
        \Rightarrow\; \varphi^{\gamma}(x^{k+1})-\varphi^{\gamma}(x^k)&\le C\|y^{k+1}-y^k\|^2+\frac{\eta(t^k-1)}{2t^{k+1}}\|w^k-w^{k-1}\|^2\\\notag
        &\qquad+\frac{t^k-1}{2\eta t^{k+1}}\|\nabla g(y^{k+1})-\nabla g(y^k)+\nabla h(y^{k+1})-\nabla h(y^k)\\
        &\qquad-y^{k+1}+y^k\|^2,\qquad\text{where }\eta>0.
    \end{align}

    Now, using the triangle inequality of the norm and then the $L_g,~L_h$-smoothness of $g$ and $h$ respectively in $\|\nabla g(y^{k+1})-\nabla g(y^k)+\nabla h(y^{k+1})-\nabla h(y^k)-y^{k+1}+y^k\|$, we obtain
    \begin{align}\label{F-SD 4}
        \notag
        &\quad\|\nabla g(y^{k+1})-\nabla g(y^k)+\nabla h(y^{k+1})-\nabla h(y^k)-y^{k+1}+y^k\|\\\notag
        &\le \|\nabla g(y^{k+1})-\nabla g(y^k)\|+\|\nabla h(y^{k+1})-\nabla h(y^k)\|+\|y^{k+1}-y^k\|\\
        &\le (L_g+L_h+1)\|y^{k+1}-y^k\|.
    \end{align}

    Now, we try to find an upper bound of $\|w^k-w^{k-1}\|$. Using \cite[Lemma 3.1]{liu2019envelope} for $\mathrm{prox}_{\gamma f}$ and then the triangle inequality of norm and $L_g,~L_h$-smoothness of $g$ and $h$ respectively, for $\gamma<\frac{1}{\beta_f}$, we get
    {\allowdisplaybreaks
    \begin{align}\label{F-SD 5}
        \notag
        \|w^k-w^{k-1}\|&\le\frac{1}{1-\gamma \beta_f}\|y^k-\gamma\nabla g(y^k)-\gamma\nabla h(y^k)-y^{k-1}+\gamma\nabla g(y^{k-1})\\\notag
        &\qquad+\gamma\nabla h(y^{k-1})\|\\\notag
        &\le\frac{1}{1-\gamma \beta_f}\left(\|y^k-y^{k-1}\|+\gamma L_g\|y^k-y^{k-1}\|+\gamma L_h\|y^k-y^{k-1}\|\right)\\
        \Rightarrow\;\|w^k-w^{k-1}\|&\le\frac{1+\gamma L_g+\gamma L_h}{1-\gamma\beta_f}\|y^k-y^{k-1}\|.
    \end{align}}

    After that, substituting (\ref{F-SD 4}) and (\ref{F-SD 5}) in (\ref{F-SD 3}), we deduce
    \begin{align}\label{F-SD 6}
        \notag
        &\varphi^{\gamma}(x^{k+1})-\varphi^{\gamma}(x^k)\\\notag
        &\le C\|y^{k+1}-y^k\|^2+\frac{\eta(t^k-1)}{2t^{k+1}}\frac{(1+\gamma L_g+\gamma L_h)^2}{(1-\gamma\beta_f)^2}\|y^k-y^{k-1}\|^2\\\notag
        &\qquad+\frac{t^k-1}{2\eta t^{k+1}}(L_g+L_h+1)^2\|y^{k+1}-y^k\|^2\\\notag
        &= C\|y^{k+1}-y^k\|^2+\frac{\eta(t^k-1)}{2t^{k+1}}\frac{(1-\gamma +\gamma L)^2}{(1-\gamma\beta_f)^2}\|y^k-y^{k-1}\|^2+\frac{L^2(t^k-1)}{2\eta t^{k+1}}\|y^{k+1}-y^k\|^2\\\notag
        &\qquad\qquad\qquad\qquad\qquad\qquad\qquad\qquad\qquad\qquad\qquad\qquad\qquad[\text{Let }L=1+L_g+L_h]\\
        &=\left(C+\frac{L^2(t^k-1)}{2\eta t^{k+1}}\right)\|y^{k+1}-y^k\|^2+\frac{\eta(t^k-1)}{2t^{k+1}}\frac{(1-\gamma +\gamma L)^2}{(1-\gamma\beta_f)^2}\|y^k-y^{k-1}\|^2.
    \end{align}

    Now, (\ref{F-SD 6}) implies that
    {\allowdisplaybreaks
    \begin{align}
        \notag
        &\left(|C|-\frac{L^2(t^k-1)}{2\eta t^{k+1}}\right)\|y^{k+1}-y^k\|^2-\frac{\eta(t^k-1)}{2t^{k+1}}\frac{(1-\gamma +\gamma L)^2}{(1-\gamma\beta_f)^2}\|y^k-y^{k-1}\|^2\\\notag
        &\qquad\qquad\qquad\qquad\qquad\qquad\qquad\qquad\qquad\qquad\qquad\qquad\quad\le\varphi^{\gamma}(x^k)-\varphi^{\gamma}(x^{k+1})\\\notag
        \Rightarrow\;&\left(|C|-\frac{L^2}{2\eta}\right)\|y^{k+1}-y^k\|^2-\frac{\eta}{2}\frac{(1-\gamma +\gamma L)^2}{(1-\gamma\beta_f)^2}\|y^k-y^{k-1}\|^2\le\varphi^{\gamma}(x^k)-\varphi^{\gamma}(x^{k+1})\\\notag
        &\qquad\qquad\qquad\qquad\qquad\qquad\qquad\qquad\qquad\qquad\qquad\qquad[\text{By using Lemma \ref{t^k}}]\\\notag
        \Rightarrow\;&\left(|C|-\frac{L^2}{2\eta }\right)\sum_{k\in\mathbb{N}}\|y^{k+1}-y^k\|^2-\frac{\eta}{2}\frac{(1-\gamma +\gamma L)^2}{(1-\gamma\beta_f)^2}\sum_{k\in\mathbb{N}}\|y^k-y^{k-1}\|^2\\\notag
        &\qquad\qquad\qquad\qquad\qquad\qquad\qquad\qquad\qquad\qquad\qquad\le\sum_{k\in\mathbb{N}}[\varphi^{\gamma}(x^k)-\varphi^{\gamma}(x^{k+1})]\\\notag
        &\qquad\qquad\qquad\qquad\qquad\qquad\qquad\qquad\qquad\qquad\qquad\le\varphi^{\gamma}(x^1)-\inf\varphi^{\gamma}\\\notag
        &\qquad\qquad\qquad\qquad\qquad\qquad\qquad\qquad\qquad\qquad\qquad=\varphi^{\gamma}(x^1)-\inf\varphi\\\notag
        &\qquad\qquad\qquad\qquad\qquad\qquad\qquad\qquad\qquad\qquad\qquad\qquad[\text{Since, }\inf\varphi^{\gamma}=\inf\varphi]\\\notag
        \Rightarrow\;&\left[|C|-\frac{L^2}{2\eta}-\frac{\eta}{2}\frac{(1-\gamma +\gamma L)^2}{(1-\gamma\beta_f)^2}\right] \sum_{k\in\mathbb{N}} \|y^{k+1}-y^k\|^2 \le\varphi^{\gamma}(x^1)-\inf\varphi\\
        &\qquad\qquad\qquad\qquad\qquad\qquad\qquad\qquad\qquad\qquad\qquad+\frac{\eta}{2}(1-\gamma +\gamma L)^2\|y^1-y^0\|^2.
    \end{align}}

    We know that $-\infty<\inf \varphi<+\infty$ by Assumption \ref{Assumption 1} and $\varphi^{\gamma}(x^1),~\frac{\eta}{2}(1-\gamma +\gamma L)^2\|y^1-y^0\|^2$ are real constants, then we say that
    \begin{equation}\label{F-SD 7}
        \left[|C|-\frac{L^2}{2\eta }-\frac{\eta}{2}\frac{(1-\gamma +\gamma L)^2}{(1-\gamma\beta_f)^2}\right]\sum_{k\in\mathbb{N}}\|y^{k+1}-y^k\|^2<+\infty.
    \end{equation}

    Now, if we choose an $\eta>0$ such that $|C|-\frac{L^2}{2\eta}-\frac{\eta}{2}\frac{(1-\gamma +\gamma L)^2}{(1-\gamma\beta_f)^2}>0$, then from (\ref{F-SD 7}), we say 
    \begin{align}\label{F-SD 8}
        \sum_{k\in\mathbb{N}}\|y^{k+1}-y^k\|^2<+\infty.
    \end{align}

    Using (\ref{F-SD 8}), we consider $\frac{L^2(t^k-1)}{2\eta t^{k+1}}\|y^{k+1}-y^k\|^2$ and $\frac{\eta(t^k-1)}{2t^{k+1}}\frac{(1-\gamma +\gamma L)^2}{(1-\gamma\beta_f)^2}\|y^k-y^{k-1}\|^2$ as summable error terms and from (\ref{F-SD 5}) we say essentially the following inequality is true: 
    \begin{align}\label{F_SD 9}
        \varphi^{\gamma}(x^{k+1})&\le\varphi^{\gamma}(x^k)+C\|y^{k+1}-y^k\|^2\;\;\;\;\;\text{where }C<0.
    \end{align}
    
    Now, using a similar approach, which is used to show that (\ref{SD}) implies (\ref{Sufficient Decrease}), here we obtain that (\ref{F_SD 9}) implies 

    \begin{align}\label{FISTA_Sufficient Decrease}
        \varphi^{\gamma}(x^{k+1})&\le\varphi^{\gamma}(x^k)+\frac{C}{(1+\gamma L_g)^2}\|x^{k+1}-x^k\|^2\;\;\;\;\;\text{where }C<0.
    \end{align}
    
    Hence, the proof is done.
\end{proof}

Now, in the next theorem, we use Theorem \ref{FISTA_SD_TH} and study the convergence results for Algorithm \ref{FISTA-DYS}. In Theorem \ref{Subsequential convergence}, we study the residual convergence, cluster points, and boundedness of the sequence $\{(x^k,y^k,w^k)\}$, generated by Algorithm \ref{FISTA-DYS}.

\begin{thm}[Subsequential convergence for FISTA-type DYS]\label{Subsequential convergence}
    Let us consider that Assumption \ref{Assumption 1} is satisfied, and consider a sequence $\{(x^k, y^k, w^k)\}$ generated by Algorithm \ref{FISTA-DYS} with stepsize $\gamma$ and relaxation $\lambda$, as in sufficient decrease section, starting from $x^0 \in \mathbb{R}^n$. The following holds:
    \begin{enumerate}
        \item[(i)] The residual $\{w^k - y^k\}$ vanishes with rate $\min_{i \leq k} \|w^i - y^i\| = o\left(\frac{1}{\sqrt{k}}\right).$
        \item[(ii)] Sequences $\{y^k\}$ and $\{w^k\}$ have the same cluster points, all of which are stationary for $\varphi$ and on which $\varphi$ has a finite value, this being the limit of $\{\varphi^\gamma(x^k)\}$. In fact, for each $k$ one has $\mathrm{dist}(0,~\hat{\partial} \varphi(w^k)) \leq \frac{2 + \gamma (L_g + L_h)}{\gamma} \|y^k - w^k\|.$ Moreover, if $\varphi$ satisfies the PL-inequality with respect to $\mu>0$ then $\varphi(w^k)-\varphi^*\in o\left(\frac{1}{k}\right),$ where $\varphi^*=\inf\varphi.$
        \item[(iii)] If $\varphi$ has bounded level sets, then the sequence $\{(x^k, y^k, w^k)\}$ is bounded.
    \end{enumerate}
\end{thm}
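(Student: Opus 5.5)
The plan is to follow the Themelis–Patrinos template for the Douglas–Rachford envelope. Theorem \ref{FISTA_SD_TH} supplies the descent inequality, Proposition \ref{Minimization and level-boundedness equivalence} bounds $\varphi^\gamma$ from below, and the remaining work is to pass from $\|x^{k+1}-x^k\|$ or $\|y^{k+1}-y^k\|$ to the residual $\|w^k-y^k\|$.

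First, telescope the inequality of Theorem \ref{FISTA_SD_TH}. Together with $\inf\varphi^\gamma=\inf\varphi>-\infty$ this gives
\[
\sum_{k}\|y^{k+1}-y^k\|^2<+\infty .
\]
It also shows that $\{\varphi^\gamma(x^k)\}$ is (essentially) nonincreasing, since the error terms are summable, and hence converges to a finite limit $\varphi_\star$.

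Next, use identity \eqref{residual_fista} and the bound \eqref{F-SD 5} on $\|w^k-w^{k-1}\|$, with $\beta^k<1$ from Lemma \ref{t^k}. This yields
\[
\|w^k-y^k\|\le \tfrac{1+\gamma L_g}{\lambda}\|y^{k+1}-y^k\|+\tfrac{1+\gamma(L_g+L_h)}{1-\gamma\beta_f}\|y^k-y^{k-1}\|,
\]
so $\sum_k\|w^k-y^k\|^2<\infty$. Applying Lemma \ref{min order lemma} with $x^n\equiv1$ and $y^n=\min_{i\le n}\|w^i-y^i\|^2$, which is decreasing, gives $\min_{i\le k}\|w^i-y^i\|^2=o(1/k)$. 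This proves (i).

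For (ii), write the optimality condition of the $w$-step and the identity $x^k=y^k+\gamma\nabla g(y^k)$:
\[
\tfrac1\gamma(y^k-w^k)-\nabla g(y^k)-\nabla h(y^k)\in\hat\partial f(w^k).
\]
Adding $\nabla g(w^k)+\nabla h(w^k)$ and using Lipschitz continuity, the resulting element of $\hat\partial\varphi(w^k)$ has norm at most $(\tfrac1\gamma+L_g+L_h)\|y^k-w^k\|$. This is already within the stated constant $\frac{2+\gamma(L_g+L_h)}{\gamma}$.

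For the cluster points, since $\|w^k-y^k\|\to0$ along the full sequence (the terms are square-summable), $\{y^k\}$ and $\{w^k\}$ share cluster points. Along a subsequence $w^{k_j}\to\bar w$, the sandwich of Proposition \ref{Sandwiching property} gives $\varphi(w^k)\le\varphi^\gamma(x^k)\to\varphi_\star$. For the reverse inequality, plug $u=\bar w$ into the minimization \eqref{DYE_1} at $w^{k_j}$ and use continuity of $g,h$ to get $\limsup f(w^{k_j})\le f(\bar w)$; with lower semicontinuity this gives $\varphi(w^{k_j})\to\varphi(\bar w)=\varphi_\star$. Closedness of the limiting subdifferential under $f$-attentive convergence then shows $\bar w$ is stationary.

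For the PL part, combine the subgradient bound with the PL inequality:
\[
\varphi(w^k)-\varphi^*\le\tfrac{1}{2\mu}\Big(\tfrac{2+\gamma(L_g+L_h)}{\gamma}\Big)^2\|y^k-w^k\|^2,
\]
which is summable. To get $o(1/k)$ I would apply Lemma \ref{min order lemma}, which requires monotonicity. I expect this to be the main obstacle, because $\varphi(w^k)$ need not be monotone. What I would try first is to bound it by the nonincreasing envelope values, using $\varphi(w^k)\le\varphi^\gamma(x^k)$ and showing $\varphi^\gamma(x^k)-\varphi^*$ is summable through the same residual bound. Failing that, I would settle for the $\min_{i\le k}$ version.

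Finally, (iii) follows from Proposition \ref{Minimization and level-boundedness equivalence}(iii). Since $\varphi^\gamma(x^k)\le\varphi^\gamma(x^0)+\sum(\text{summable errors})$, the iterates $x^k$ stay in a level set of $\varphi^\gamma$, which is bounded. Then $y^k=\mathrm{prox}_{\gamma g}(x^k)$ is bounded by Lipschitz continuity of $\mathrm{prox}_{\gamma g}$, and $w^k$ is bounded because $\|w^k-y^k\|\to0$.
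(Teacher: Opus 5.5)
Your proposal is correct. For (i), the bound on $\mathrm{dist}(0,\hat\partial\varphi(w^k))$, and (iii), it coincides with the paper's proof: \eqref{residual_fista} together with \eqref{F-SD 5} and $\beta^k<1$, the Fermat rule for the $w$-step, and level-boundedness of $\varphi^\gamma$ from Proposition~\ref{Minimization and level-boundedness equivalence}(iii). The differences are in (ii).

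\textbf{Cluster points and stationarity.} The paper passes to the limit in $w^k\in\mathrm{prox}_{\gamma f}(2y^k-x^k-\gamma\nabla h(y^k))$ using outer semicontinuity of $\mathrm{prox}_{\gamma f}$. This gives $\tilde y\in\mathrm{prox}_{\gamma f}(\tilde y-\gamma\nabla g(\tilde y)-\gamma\nabla h(\tilde y))$, hence $0\in\hat\partial\varphi(\tilde y)$ directly. For the value, the paper uses strict continuity of $\varphi^\gamma$ (Proposition~\ref{Strict continuity}) at $\tilde x=\tilde y+\gamma\nabla g(\tilde y)$ together with both halves of Proposition~\ref{Sandwiching property}.

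You instead test the $w$-subproblem against $u=\bar w$, which is in effect Proposition~\ref{quadratic upper bound} with $\tilde y=\bar w$. This gives $\varphi_\star\le\varphi(\bar w)$ and $f$-attentive convergence in one stroke, with no continuity of the envelope needed. You then close the limiting subdifferential along $v^{k_j}\to0$. That yields $0\in\partial\varphi(\bar w)$ for the limiting subdifferential, whereas the paper's stationarity is $0\in\hat\partial\varphi$. The two coincide here because $f$ is $\beta_f$-weakly convex, so $\hat\partial f=\partial f$ and both subdifferentials of $\varphi$ are $\partial f+\nabla(g+h)$. You should say this explicitly, or let $u$ be arbitrary in your comparison, which recovers the paper's fixed-point inclusion. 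You also correctly identify the common value as $\lim_k\varphi^\gamma(x^k)$, which is exactly what the statement asks. The paper's proof additionally equates it with $\inf\varphi$, which is not needed.

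\textbf{The PL claim.} Your first attempt works, so do not fall back to the running minimum.
\begin{itemize}
\item Proposition~\ref{quadratic upper bound} with $\tilde y=w^k$ gives $\varphi^\gamma(x^k)\le\varphi(w^k)+\frac{1+\gamma(L_g+L_h)}{2\gamma}\|y^k-w^k\|^2$.
\item Combining this with PL and your subgradient bound gives $0\le\varphi^\gamma(x^k)-\varphi^*\le K\|y^k-w^k\|^2$, which is summable by (i).
\item Since $\{\varphi^\gamma(x^k)\}$ is nonincreasing by Theorem~\ref{FISTA_SD_TH}, Lemma~\ref{min order lemma} with $x^n\equiv1$ gives $\varphi^\gamma(x^k)-\varphi^*=o(1/k)$.
\item Proposition~\ref{Sandwiching property}(ii) then transfers this to $\varphi(w^k)-\varphi^*$.
\end{itemize}
If you want to be robust to the summable error terms inside the proof of Theorem~\ref{FISTA_SD_TH}, use $\varphi^\gamma(x^k)+a\|y^k-y^{k-1}\|^2$ with $a=\frac{\eta}{2}\frac{(1+\gamma L_g+\gamma L_h)^2}{(1-\gamma\beta_f)^2}$. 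This quantity is genuinely nonincreasing by \eqref{F-SD 6} and the choice of $\eta$, and the same argument applies. Your route is more careful than the paper's, which inserts $\|y^k-w^k\|=o(1/\sqrt k)$ term by term even though (i) only controls $\min_{i\le k}\|w^i-y^i\|$.
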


\begin{proof}
    Using the $\frac{1}{1+\gamma L_g}$-strong monotonicity of $\mathrm{prox}_{\gamma g}$, we get $\|x^{k+1}-x^k\|\le(1+\gamma L_g)\|y^{k+1}-y^k\|$, which implies that

    \allowdisplaybreaks{
    \begin{align}\label{subpart}
        \notag
        & \lambda \|z^{k} - y^{k}\|\le (1+\gamma L_g)\|y^{k+1} - y^{k}\|\;\;\;\;\;[\text{By using Algorithm \ref{FISTA-DYS}}]\\\notag
        \Rightarrow\;& \|z^{k} - y^{k}\|\le \frac{1+\gamma L_g}{\lambda} \|y^{k+1} - y^{k}\| \\
        \Rightarrow\;& \left\| w^{k} + \frac{t^{k}-1}{t^{k+1}}(w^{k} - w^{k-1}) - y^{k} \right\|\le \frac{1+\gamma L_g}{\lambda} \|y^{k+1} - y^{k}\|\qquad[\text{By Algorithm \ref{FISTA-DYS}}].
    \end{align}}

    Now, using the triangle inequality of the norm in \eqref{subpart}, we get
    $$\|w^{k} - y^{k}\| -\frac{t^{k}-1}{t^{k+1}} \|w^{k} - w^{k-1}\|\le\frac{1+\gamma L_g}{\lambda} \|y^{k+1} - y^{k}\|,$$
    which implies that
    
    \begin{align}\label{Th 7.3 (i)}
        \notag
        & \|w^{k} - y^{k}\| \le \frac{1+\gamma L_g}{\lambda} \|y^{k+1} - y^{k}\|+ \frac{t^{k}-1}{t^{k+1}} \|w^{k} - w^{k-1}\|\\\notag
        \Rightarrow\;& \|w^{k} - y^{k}\|^2 \le \frac{(1+\gamma L_g)^2}{\lambda^2} \|y^{k+1} - y^{k}\|^2 + \left(\frac{t^{k}-1}{t^{k+1}}\right)^2 \|w^{k} - w^{k-1}\|^2 \\\notag
        &\qquad\qquad\qquad + 2 \frac{1+\gamma L_g}{\lambda} \frac{t^{k}-1}{t^{k+1}} \|y^{k+1} - y^{k}\| \|w^{k} - w^{k-1}\| \\\notag
        \Rightarrow\;& \|w^{k} - y^{k}\|^2 \le \frac{(1+\gamma L_g)^2}{\lambda^2} \|y^{k+1} - y^{k}\|^2 + \|w^{k} - w^{k-1}\|^2 \\\notag
        &\qquad\qquad\qquad+ 2 \frac{1+\gamma L_g}{\lambda} \|y^{k+1} - y^{k}\| \|w^{k} - w^{k-1}\|\qquad [\text{By using Lemma \ref{t^k}}]\\\notag
        \Rightarrow\;& \|w^{k} - y^{k}\|^2 \le \frac{2(1+\gamma L_g)^2}{\lambda^2} \|y^{k+1} - y^{k}\|^2 + 2\|w^{k} - w^{k-1}\|^2\\
        \Rightarrow\;& \sum_{k\in\mathbb{N}} \|w^{k} - y^{k}\|^2 \le \frac{2(1+\gamma L_g)^2}{\lambda^2} \sum_{k\in\mathbb{N}} \|y^{k+1} - y^{k}\|^2 + 2\sum_{k\in\mathbb{N}}\|w^{k} - w^{k-1}\|^2.
    \end{align}

    Now, from (\ref{F-SD 5}), (\ref{F-SD 8}) and (\ref{Th 7.3 (i)}) we write $\sum_{k\in\mathbb{N}} \|w^{k} - y^{k}\|^2 < +\infty$ and hence, $\lim_{k\to+\infty}\|w^{k}-y^{k}\|^2=0,\;\text{i.e., }\lim_{k\to+\infty}\|w^{k}-y^{k}\|=0,\;\text{i.e., }\lim_{k\to+\infty}(w^{k}-y^{k})=0$ and, we get $\min_{i\le k}\|w^{k} - y^{k}\| = o\left(\frac{1}{\sqrt{k}}\right).$

    From Theorem \ref{Subsequential convergence} \textit{(i)}, $\lim_{k\to+\infty}(w^{k}-y^{k})=0\;\Rightarrow\;\lim_{k\to+\infty}w^k=\lim_{k\to+\infty}y^k.$

    Let $\lim_{K\ni k\to+\infty}w^k=\lim_{K\ni k\to+\infty}y^k=\tilde{y}$ and $\lim_{K\ni k\to+\infty}x^k=\tilde{x}$ for some $K\subseteq\mathbb{N}.$

    Now,
    \begin{align*}
        &y^k=\mathrm{prox}_{\gamma g}(x^k)\\
        \Rightarrow\;&x^k=y^k+\gamma\nabla g(y^k)\;\to\;\tilde{y}+\gamma\nabla g(\tilde{y})\text{ as }K\ni k\;\to\;+\infty\\
        &\qquad\qquad\qquad\qquad\qquad\qquad[\text{since $\nabla g$ is continuous as $g$ is $L_g$-smooth}].
    \end{align*}

    So, $\tilde{x}=\tilde{y}+\gamma\nabla g(\tilde{y})\;\Rightarrow\;\tilde{y}=\mathrm{prox}_{\gamma g}(\tilde{x}).$
    From Assumption \ref{Assumption 1}, $f$ is proper and closed, so by using \cite[Theorem 2.6]{beck2017first} $f$ is proper and lower semicontinuous and hence, by using \cite[Example 5.23]{rockafellar2009variational} $\mathrm{prox}_{\gamma g}$ is outer semicontinuous. Then, $\tilde{y}=\lim_{K\ni k\to+\infty}w^k\in\limsup_{K\ni k\to+\infty}\mathrm{prox}_{\gamma f}(2y^k-x^k-\gamma\nabla h(y^k))\subseteq\mathrm{prox}_{\gamma f}(2\tilde{y}-\tilde{x}-\gamma\nabla h(\tilde{y}))$, which implies that
    \begin{align*}
        &\tilde{y}\in \mathrm{prox}_{\gamma f}(\tilde{y}-\gamma\nabla g(\tilde{y})-\gamma\nabla h(\tilde{y}))\;\;\;\;\;[\text{Using }\tilde{x}=\tilde{y}+\gamma\nabla h(\tilde{y})]\\
        \Rightarrow\;&\frac{1}{\gamma}[\tilde{y}-\gamma\nabla g(\tilde{y})-\gamma \nabla h(\tilde{y})-\tilde{y}]\in\hat{\partial}f(\tilde{y})\\
        \Rightarrow\;&-\nabla g(\tilde{y})-\nabla h(\tilde{y})\in\hat{\partial}f(\tilde{y})\\
        \Rightarrow\;&0\in\hat{\partial}f(\tilde{y})+\nabla g(\tilde{y})+\nabla h(\tilde{y})\\
        \Rightarrow\;&0\in\hat{\partial}\varphi(\tilde{y}).
    \end{align*}
    
    So, $\tilde{y}$ is a stationary point of $\varphi.$

    By Assumption \ref{Assumption 1}, we say $\varphi$ is lower semicontinuous and we also have that, $\lim_{K\ni k\to+\infty}w^k=\tilde{y}$. Then, we get
    \begin{align*}
        \varphi(\tilde{y})&\le\liminf_{K\ni k\to+\infty}\varphi(w^k)\;\;\;\;\;[\text{By lower semicontinuity of }\varphi]\\
        &\le\limsup_{K\ni k\to+\infty} \varphi(w^k)\\
        &\le\limsup_{K\ni k\to+\infty} \left[\varphi^{\gamma}(x^k)-\frac{1-\gamma(L_g+L_h)}{2\gamma}\|w^k-y^k\|^2\right]\;\;\;\;\;[\text{By Proposition \ref{Sandwiching property}}]\\
        &=\limsup_{K\ni k\to+\infty}\varphi^{\gamma}(x^k)\;\;\;\;\;[\text{By Theorem \ref{Subsequential convergence} \textit{(i)}}]\\
        &=\varphi^{\gamma}(\tilde{x})\;\;\;\;\;[\text{Using Proposition \ref{Strict continuity}}]\\
        &\le\varphi(\tilde{y})\;\;\;\;\;[\text{By Proposition \ref{Sandwiching property}}].
    \end{align*}

    Thus, we write $\lim_{K\ni k\to+\infty}\varphi(w^k)=\varphi(\tilde{y})=\varphi^{\gamma}(\tilde{x}).$

    Now, since $\varphi^{\gamma}(x^k)\to\inf\varphi=\varphi^*\;\text{as }K\ni k\to+\infty$, then $\varphi(\tilde{y})=\varphi^{\gamma}(\tilde{x})=\varphi^*<+\infty$ [By Assumption \ref{Assumption 1}].

    Now, from Algorithm \ref{FISTA-DYS} we have
    \begin{align*}
        &w^k\in \mathrm{prox}_{\gamma f}(2y^k-x^k-\gamma\nabla h(y^k))\\
        \Rightarrow\;&\frac{1}{\gamma}[2y^k-x^k-\gamma\nabla h(y^k)-w^k]\in\hat{\partial}f(w^k)\\
        \Rightarrow\;&\frac{1}{\gamma}(y^k-w^k)-\nabla g(y^k)-\nabla h(y^k)\in\hat{\partial}f(w^k)\;\;\;\;\;[\text{Since }x^k=y^k+\gamma\nabla g(y^k)]\\
        \Rightarrow\;&\frac{1}{\gamma}(y^k-w^k)+\nabla g(w^k)+\nabla h(w^k)-\nabla g(y^k)-\nabla h(y^k)\in \hat{\partial}\varphi(w^k).
    \end{align*}

    So, 
    \begin{align}\label{Th 7.4 (iii)}
        \mathrm{dist}(0,\hat{\partial}\varphi(w^k))&\le\left\|\frac{1}{\gamma}(y^k-w^k)+(\nabla g(w^k)-\nabla g(y^k))+(\nabla h(w^k)-\nabla h(y^k))\right\|.
    \end{align}

    By using triangle inequality of norm and $L_g,~L_h$-smoothness of $g$ and $h$ respectively in (\ref{Th 7.4 (iii)}), we get
    \begin{align*}
        \mathrm{dist}(0,\hat{\partial}\varphi(w^k))&\le\frac{1}{\gamma}\|y^k-w^k\|+L_g\|y^k-w^k\|+L_h\|y^k-w^k\|\\
        &\le\frac{1+\gamma(L_g+L_h)}{\gamma}\|y^k-w^k\|= o\left(\frac{1}{\sqrt{k}}\right).\;\;\;\;\;[\text{By using Theorem \ref{Subsequential convergence}}]
    \end{align*}

    If $\varphi$ satisfies PL-inequality with respect to $\mu>0$ then we write $$\varphi(w^k)-\varphi^*\le \frac{1}{2\mu}\mathrm{dist}^2(0,\hat{\partial}\varphi(w^k))= o\left(\frac{1}{k}\right).$$

    Let $\varphi$ have bounded level sets. Then, by Proposition \ref{Minimization and level-boundedness equivalence} \textit{(iii)} $\varphi^{\gamma}$ has bounded level sets.

    Now, by Theorem \ref{FISTA_SD_TH}, we have
    \allowdisplaybreaks{\begin{align*}
        &\varphi^{\gamma}(x^k)\le\varphi^{\gamma}(x^0),\;\;\forall\;k\in\mathbb{N}\cup\{0\}\\
        \Rightarrow\;&x^k\in\mathrm{lev}_{\le\varphi^{\gamma}(x^0)}\varphi^{\gamma},\;\;\forall\;k\in\mathbb{N}\cup\{0\}\\
        \Rightarrow\;& \{x^k\}\text{ is bounded.}
    \end{align*}}

    Then, by Lipschitz continuity of $\mathrm{prox}_{\gamma g}$, we say $\{y^k\}$ is bounded and hence by Theorem \ref{Subsequential convergence} \textit{(i)}, $\{w^k\}$ is bounded.
\end{proof}

The KL property is a very useful tool for analyzing the global convergence of a descent method. In Proposition \ref{Relation between semialgebraic and KL}, we see how a proper, lower semicontinuous, and semialgebraic function relates to the KL property. In \cite[Theorem 2]{li2016douglas}, we see how global convergence of DRS by a merit function is proved using a semialgebraic condition. After that, using this theorem in \cite[Theorem 4.4]{themelis2020douglas}, global convergence of DRS by DRE is shown using a semialgebraic condition for functions. Now, similar to \cite[Theorem 3.10]{wu2024extrapolated}, the global convergence of DYS by a merit function is proved using the KL property of the total objective function. Here, we consider $f,~g,$ and $h$ to be semialgebraic, and then we say $\varphi=f+g+h$ is also a semialgebraic function; and hence, by Proposition \ref{Relation between semialgebraic and KL}, $\varphi$ is a KL function. Now, in Theorem \ref{FISTA_SD_TH} we show the decent property of $\varphi^{\gamma}$ under some conditions and in Theorem \ref{Subsequential convergence} we show the bound $\mathrm{dist}(0,~\hat{\partial} \varphi(w^k)) \leq \frac{2 + \gamma (L_g + L_h)}{\gamma} \|y^k - w^k\|$ for all $k$. In these theorems, we use tighter conditions related to \cite[Theorem 3.10]{wu2024extrapolated}, and we see that Theorem \ref{Global convergence for FISTA-type DYS} is also an extended result of \cite[Theorem 3.10]{wu2024extrapolated}.

\begin{thm}[Global convergence for FISTA-type DYS]\label{Global convergence for FISTA-type DYS}
    Let us consider that, Assumption \ref{Assumption 1} is satisfied, $\varphi$ is level bounded, and $f,~g,$ and $h$ are semialgebraic functions. Also if we consider a sequence $\{(x^k,~y^k,~w^k)$ generated by Algorithm \ref{FISTA-DYS} with all convergence conditions from Theorem \ref{FISTA_SD_TH} then the sequences $\{y^k\}$ and $\{w^k\}$ are convergent and they converge to stationary point of $\varphi.$
\end{thm}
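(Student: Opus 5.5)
The plan is to run the standard KL-based finite-length argument of Attouch--Bolte--Svaiter / Bolte--Sabach--Teboulle, as adapted in \cite[Theorem 4.4]{themelis2020douglas} and \cite[Theorem 3.10]{wu2024extrapolated}. The merit function is the Davis--Yin envelope $\varphi^{\gamma}$ evaluated along $\{x^k\}$. The KL inequality, however, is taken for $\varphi$ at the points $w^k$.

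\textbf{Step 1 (preparation).} Since $f,g,h$ are semialgebraic, so is $\varphi$, and Proposition \ref{Relation between semialgebraic and KL} shows that $\varphi$ is a KL function with desingularizer $\psi(s)=\alpha s^{1-\eta}$. By level boundedness and Theorem \ref{Subsequential convergence}(iii), the sequence $\{(x^k,y^k,w^k)\}$ is bounded. Let $\Omega$ be the set of cluster points of $\{w^k\}$, which equals that of $\{y^k\}$. Then $\Omega$ is nonempty and compact, and by Theorem \ref{Subsequential convergence}(ii) it consists of stationary points on which $\varphi\equiv\varphi_\infty:=\lim_k\varphi^{\gamma}(x^k)$. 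Along the way I also record two facts. First, $\varphi(w^k)\to\varphi_\infty$, by the sandwich $\varphi(w^k)\le\varphi^\gamma(x^k)-\tfrac{1-\gamma(L_g+L_h)}{2\gamma}\|w^k-y^k\|^2$ combined with lower semicontinuity. Second, $\mathrm{dist}(w^k,\Omega)\to0$. If $\varphi^\gamma(x^k)=\varphi_\infty$ for some $k$, the sufficient decrease of Theorem \ref{FISTA_SD_TH} forces the sequence to be eventually stationary, and we are done. Otherwise, the uniformized KL lemma \cite{bolte2014proximal} on the compact set $\Omega$ gives, for all large $k$,
$$\psi'\bigl(\varphi(w^k)-\varphi_\infty\bigr)\,\mathrm{dist}(0,\hat\partial\varphi(w^k))\ge1.$$

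\textbf{Step 2 (linking envelope decrease to KL).} The main obstacle is that the decrease is measured in $\varphi^\gamma(x^k)$, whereas KL is available for $\varphi(w^k)$. I would handle this with the inequality $\varphi(w^k)\le\varphi^\gamma(x^k)$ from Proposition \ref{Sandwiching property}(ii). Because $\psi'$ is decreasing, it gives $\psi'(\varphi^\gamma(x^k)-\varphi_\infty)\le\psi'(\varphi(w^k)-\varphi_\infty)$ whenever $\varphi(w^k)>\varphi_\infty$. The cleaner route, and the one I would try first, is to note that $\varphi^\gamma$ itself is semialgebraic (it is built from semialgebraic data by a parametric minimization) and hence KL. One can then bound $\mathrm{dist}(0,\hat\partial\varphi^\gamma(x^k))$ by a multiple of $\|w^k-y^k\|$, in the spirit of \cite{themelis2020douglas}. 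Either way, the goal is to combine the bound $\mathrm{dist}(0,\hat\partial\varphi(w^k))\le c\|w^k-y^k\|$ from Theorem \ref{Subsequential convergence}(ii) with the estimate
$$\|w^k-y^k\|\le c_1\|y^{k+1}-y^k\|+c_2\|y^k-y^{k-1}\|,$$
which follows from \eqref{subpart}, Lemma \ref{t^k} and \eqref{F-SD 5}. This produces a relative-error inequality of the form $\mathrm{dist}(0,\partial\Phi_k)\le b(\|y^{k+1}-y^k\|+\|y^k-y^{k-1}\|)$. Since the FISTA momentum introduces the lag, I would pass to the Lyapunov-type function $\Phi_k=\varphi^\gamma(x^k)+\kappa\|y^k-y^{k-1}\|^2$. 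The inequality \eqref{F-SD 6} with a suitable $\eta$ gives $\Phi_{k+1}\le\Phi_k-a\|y^{k+1}-y^k\|^2$ for some $a>0$.

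\textbf{Step 3 (finite length and conclusion).} With $\Delta_k=\psi(\Phi_k-\varphi_\infty)-\psi(\Phi_{k+1}-\varphi_\infty)$, concavity of $\psi$ gives
$$a\|y^{k+1}-y^k\|^2\le b\,\Delta_k\bigl(\|y^k-y^{k-1}\|+\|y^{k-1}-y^{k-2}\|\bigr).$$
Applying $2\sqrt{uv}\le u+v$ and summing, the standard telescoping yields $\sum_k\|y^{k+1}-y^k\|<\infty$. Hence $\{y^k\}$ is Cauchy and converges to some $y^\star$. Then $\{x^k\}=\{y^k+\gamma\nabla g(y^k)\}$ converges as well. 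By Theorem \ref{Subsequential convergence}(i), $w^k-y^k\to0$, so $w^k\to y^\star$, and $y^\star\in\Omega$ is stationary by Theorem \ref{Subsequential convergence}(ii).
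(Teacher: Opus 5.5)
Your architecture is the right one: a finite-length KL argument on the Lyapunov sequence $\Phi_k=\varphi^\gamma(x^k)+\kappa\|y^k-y^{k-1}\|^2$, whose descent follows from \eqref{F-SD 6}. It is more explicit than the paper, which gives no proof and only points to \cite[Theorem 3.10]{wu2024extrapolated} with the ingredients of your Step 1. Using $\Phi_k$ is also the correct replacement for the monotonicity of $\varphi^\gamma(x^k)$ asserted in Theorem~\ref{FISTA_SD_TH}.

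However, the step that carries the proof is missing. Step 3 applies concavity of $\psi$ to $\Phi_k-\varphi_\infty$. That requires a lower bound $\psi'(\Phi_k-\varphi_\infty)\ge 1/\bigl(b(\|y^{k+1}-y^k\|+\|y^k-y^{k-1}\|)\bigr)$, i.e.\ a KL-type inequality for the quantity that actually decreases. Step 1 gives KL only for $\varphi$ at $w^k$, and your bridge (a) runs the wrong way. From $\varphi(w^k)\le\varphi^\gamma(x^k)\le\Phi_k$ and $\psi'$ decreasing you get $\psi'(\Phi_k-\varphi_\infty)\le\psi'(\varphi(w^k)-\varphi_\infty)$, which is an upper bound. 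Likewise, $\mathrm{dist}(0,\hat\partial\varphi(w^k))\le c\|w^k-y^k\|$ concerns a different function at a different point. So the sentence ``this produces a relative-error inequality for $\mathrm{dist}(0,\partial\Phi_k)$'' is a non sequitur, and bridge (b) is only asserted.

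You can close the gap in either of two ways.

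\emph{Keeping KL of $\varphi$.} Use the opposite sandwich: Proposition~\ref{quadratic upper bound} with $\tilde y=w^k$ gives $\varphi^\gamma(x^k)\le\varphi(w^k)+\frac{1+\gamma(L_g+L_h)}{2\gamma}\|y^k-w^k\|^2$. Hence $\Phi_k-\varphi_\infty\le[\varphi(w^k)-\varphi_\infty]^++c_0\|y^k-w^k\|^2+\kappa\|y^k-y^{k-1}\|^2$. Since $\psi(s)=\alpha s^{1-\eta}$, you may fix one exponent $\eta\in[\tfrac12,1)$ on the compact set $\Omega$. Combining subadditivity of $s\mapsto s^\eta$, KL at $w^k$ for the first term, and $\|r\|^{2\eta}\le\|r\|$ for small $\|r\|$, you get for large $k$
$(\Phi_k-\varphi_\infty)^\eta\le c'\bigl(\|y^{k+1}-y^k\|+\|y^k-y^{k-1}\|\bigr)$. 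This is exactly what Step 3 needs, with desingularizer $s^{1-\eta}$.

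\emph{Carrying out route (b).} With $G=\nabla g+\nabla h$ one has $\varphi^\gamma(x)=\Psi(y):=g(y)+h(y)-\frac\gamma2\|G(y)\|^2+f^\gamma(y-\gamma G(y))$, and $f^\gamma\in C^1$ for $\gamma<1/\beta_f$. The Clarke chain rule then shows every element of $\partial\Psi(y)$ equals $\frac1\gamma(I-\gamma M)^{\top}(y-w)$ for some $M$ in the Clarke Jacobian of $G$. Apply KL to the semialgebraic $\Phi(y,y')=\Psi(y)+\kappa\|y-y'\|^2$ with the limiting subdifferential. With $g,h$ only $C^{1,1}$, $\varphi^\gamma$ is merely locally Lipschitz, so $\hat\partial\varphi^\gamma(x^k)$ may be empty.

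Two smaller points. With the relative-error bound you actually derive, the Step 3 inequality reads $a\|y^{k+1}-y^k\|^2\le b\Delta_k(\|y^{k+1}-y^k\|+\|y^k-y^{k-1}\|)$; your display has shifted indices, but this version still telescopes. Also, the ``eventually stationary'' case in Step 1 should be argued with the monotone $\Phi_k$, not with $\varphi^\gamma(x^k)$.
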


\section{FISTA-type DYS with Line-search}\label{FISTA-type DYS with Line-search section}
In Section \ref{FISTA-type DYS section}, we see how we use a FISTA-type acceleration term in DYS to increase the convergence speed. Now, here we try to use a line-search algorithm in our FISTA-type DYS algorithm to make our previous algorithm faster. To improve the local convergence behavior Newton-type scheme is additionally added with the FISTA-type acceleration term. The Newton-type step is used to update a better direction using higher-order information to make the convergence speed faster near a solution point. So, the combination of FISTA and Newton-type steps make the DYS algorithm much faster. Line-search is used to control the update step with the Newton-type directions and reach a stable optimal result. In classical line-search, we need the differentiability property to hold for DYE. But, from our Assumption \ref{Assumption 1} and \eqref{DYE}, we say that, DYE may be non-differentiable. So, to tackle this difficulty, we use a line-search algorithm based on the CLyD method instead of the classical line-search as discussed in \cite{themelis2018,themelis2022douglas}, because the requirements of our line-search algorithm are only the continuity and sufficient decrease property of DYE, which are shown in Proposition \ref{Strict continuity} and Theorem \ref{FISTA_SD_TH}, respectively. An another major advantage of the CLyD-based line-search algorithm is that we can use any update direction $d$ (need not be a descent direction). If the direction $d$ is very good then we take the unit stepsize and use the full direction $d$; otherwise, by backtracking, we change the stepsize and try to get a better direction using the sufficient decrease of the DYE property. In \cite{themelis2022douglas}, a continuous-based line-search method is discussed for DRS and ADMM. We use this approach in our FISTA-type DYS algorithm.

In the Algorithm \ref{FISTA-DYS-LS}, the FISTA-type DYS step is used to calculate the DYE ($\varphi^{\gamma}$) and check the sufficient decrease property. So, the computational cost of the Algorithm \ref{FISTA-DYS-LS} depends on the number of backtracks. If the sufficient decrease property is not satisfied too many times then the number of backtracking will be very large, and then the computational cost will be very high though the total number of iteration of Algorithm \ref{FISTA-DYS-LS} is small. So, to tackle this issue, we fix maximum number of backtracking ($i_{\text{max}}$). If, for some direction, the sufficient decrease property is not satisfied and the maximum backtracking limit is reached then we discard the direction and choose the standard FISTA-type DYS as an update. In some cases, where $\mathrm{prox}_{\gamma f}$ is easy to evaluate (like projection onto simple sets, thresholding, etc.), then it takes minimal backtracking, which reduces the computational cost significantly and converges very quickly.

Although any update direction can be used in our method, but computational cost and convergence speed heavily depend on the update direction. In this work, we use the quasi-Newton method with the modified Broyden update rule, which is used in \cite{themelis2022douglas} for DRS and ADMM to obtain a better update direction. For Newton-type convergence, we have to calculate the exact Jacobian or higher-order information, which is very difficult. So, to avoid this problem, here we use the quasi-Newton method, where we use an approximated Jacobian or approximated higher-order information and use the modified Broyden update rule to evaluate the approximated Jacobian or approximated higher-order information.

\begin{itemize}
    \item[$\bullet$] \textbf{Quasi-Newton FISTA-type DYS :} The update rule is
    \begin{equation}\label{Quasi-Newton FISTA-DYS direction}
        d^k=-H^kr^k,\;p^k=d^k\;\text{and } q^k=r_0^{k+1}-r^k,
    \end{equation}
    where $r_0^{k+1}=y_0^{k+1}-w_0^{k+1},\;(y_0^{k+1},w_0^{k+1})\in\text{FISTA-type DYS}_{\gamma}(x^k+d^k)$ and $r^k=y^k-w^k,\;(y^k,w^k)\in\text{FISTA-type DYS}_{\gamma}(x^k).$
    
    \item[$\bullet$] \textbf{Modified Broyden \cite{themelis2022douglas} :} Fix $\theta\in(0,1)$ and $H^0$ as invertible matrix. Then the update rule is 
    \begin{equation}\label{Jacobian Modified Broyden}
        H^{k+1}=H^k+\frac{p^k-H^kq^k}{\left\langle p^k,\left(\frac{1}{\theta^k}-1\right)p^k+H^kq^k\right\rangle}(p^k)^{\top}H^k,
    \end{equation}
    where, 
    \begin{align*}
        \theta^k=\begin{cases}
        1, & \text{if } |\delta^k|\ge\theta\\
        \frac{1-\mathrm{sgn}(\delta^k)\theta}{1-\delta^k}, & \text{if } |\delta^k|<\theta
        \end{cases}
        \quad \text{and} \quad
        \delta^k=\frac{\langle H^kq^k,p^k\rangle}{\|p^k\|^2}.
    \end{align*}

    If we take $\theta^k\equiv 1$, then this modified Broyden update rule turns into the Broyden update rule \cite{broyden1965class}.
\end{itemize}

Now, we will study the FISTA-type DYS with line-search (Algorithm \ref{FISTA-DYS-LS}) and discuss the detailed convergence analysis of our algorithm.

\begin{algorithm}[h]
\caption{FISTA-type DYS with Line-search}
\begin{algorithmic}[1]

\State \textbf{Input:} $x^0=w^{-1} \in \mathbb{R}^p$, $\epsilon > 0$, relaxation parameter $\lambda >0$, max backtracks $i_{\max} \le +\infty$, step size $\gamma>0$, $c>\frac{C}{\lambda^2(1+\gamma L_g)^2},$ where $C$ is from Lemma \ref{DYS-SD}.
\State $t^0 = 1$, $k=0$, $(y^0,w^0)\in \text{FISTA-type DYS}_{\gamma}(x^0)$, and calculate $\varphi^{\gamma}(x^0).$

\While{true}

\State $r^k = y^k - w^k$
\If{$\|r^k\| \le \epsilon$}
    \State \Return $(x^k, y^k, w^k)$
\EndIf
\State $t^{k+1} = \frac{1 + \sqrt{1 + 4(t^k)^2}}{2}$
\State $\beta^k = \frac{t^k - 1}{t^{k+1}}$
\State $z^k=w^k + \beta^k (w^k - w^{k-1})$
\State $\bar{x}^{k+1} = x^k + \lambda (z^k - y^k)$ 

\State Select direction $d^k \in \mathbb{R}^p$, set $\tau^k = 1$, $i^k = 0$

\While{true}

\State $x^{k+1} = (1 - \tau^k)\bar{x}^{k+1} + \tau^k (x^k + d^k)$

\State Compute $(y^{k+1}, w^{k+1}) \in \text{FISTA-type DYS}_\gamma(x^{k+1})$
\State Evaluate $\varphi^\gamma(x^{k+1})$

\If{$\varphi^\gamma(x^{k+1}) < \varphi^\gamma(x^k) + c\|z^k-y^k\|^2$}
    \State $k \gets k+1$
    \State \textbf{break}
\ElsIf{$i^k = i_{\max}$}
    \State $x^{k+1} = \bar{x}^{k+1}$
    \State Compute $(y^{k+1}, w^{k+1}) \in \text{FISTA-type DYS}_\gamma(x^{k+1})$
    \State $k \gets k+1$
    \State \textbf{break}
\Else
    \State $\tau^k \gets \tau^k / 2$, \quad $i^k \gets i^k + 1$
\EndIf

\EndWhile

\EndWhile

\end{algorithmic}\label{FISTA-DYS-LS}
\end{algorithm}
\subsection{Convergence Analysis for FISTA-type DYS with Line-search}\label{Convergence Analysis for FISTA-type DYS with Line-search subsection}

Before analyzing the convergence properties of Algorithm \ref{FISTA-DYS-LS}, we first check the well-defined property of Algorithm \ref{FISTA-DYS-LS}. Theorem \ref{Well definedness} shows that Algorithm \ref{FISTA-DYS-LS} takes a finite number of backtracking steps, stops after a finite number of steps, and the stationary point exists.

\begin{thm}[Well-definedness]\label{Well definedness}
    Under Assumption \ref{Assumption 1}, consider the iterates generated by Algorithm \ref{FISTA-DYS-LS}. Then, the following holds:
    \begin{enumerate}
        \item[(i)] In each iteration, the total number of backtracking steps is finite, where $i_{max}$ can be finite or infinite.
        \item[(ii)] The algorithm stops after at most $\lceil \frac{2}{c\epsilon^2}\left(\inf\varphi-\varphi^\gamma(x^0)+cM\right) \rceil$ iterations, where $\sum_{k=0}^{N-1}\|w^k-w^{k-1}\|^2=M$.
        \item[(iii)] If the last iteration is $N$, then $\text{dist}(0,\hat{\partial}\varphi(w^N))\le\frac{1+\gamma(L_g+L_h)}{\gamma}\epsilon.$
    \end{enumerate}
\end{thm}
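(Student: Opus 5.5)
The plan is to treat the three items separately. Each one rests on the sufficient decrease of the Davis–Yin envelope (Theorem \ref{FISTA_SD_TH}), its continuity (Proposition \ref{Strict continuity}), and the identity $\inf\varphi^\gamma=\inf\varphi$ (Proposition \ref{Minimization and level-boundedness equivalence}).

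\textbf{Item (i).} If $i_{\max}<+\infty$ there is nothing to prove, since the inner loop exits after at most $i_{\max}$ halvings. If $i_{\max}=+\infty$, I argue by contradiction. Suppose the test fails for every $i$. Then $\tau^k=2^{-i}\to0$, so $x^{k+1}=(1-\tau^k)\bar x^{k+1}+\tau^k(x^k+d^k)\to\bar x^{k+1}$. By Proposition \ref{Strict continuity}, $\varphi^\gamma$ is continuous, hence $\varphi^\gamma(x^{k+1})\to\varphi^\gamma(\bar x^{k+1})$.

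Next, $\bar x^{k+1}-x^k=\lambda(z^k-y^k)$ is exactly the nominal FISTA-type DYS step. Theorem \ref{FISTA_SD_TH} therefore gives
\[
\varphi^\gamma(\bar x^{k+1})\le\varphi^\gamma(x^k)+\frac{C\lambda^2}{(1+\gamma L_g)^2}\|z^k-y^k\|^2 .
\]
The input choice of $c$ is meant to make the right-hand side strictly smaller than $\varphi^\gamma(x^k)+c\|z^k-y^k\|^2$ whenever $z^k\neq y^k$. I will read the constraint on $c$ so that this strict inequality holds (the sign convention needs care, since $C<0$). Because the inequality is strict at the limit point, it also holds for all small $\tau^k$, which contradicts the assumption that the test always fails.

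The degenerate case $z^k=y^k$ needs a separate word. There the strict inequality degenerates to $\varphi^\gamma(\bar x^{k+1})\le\varphi^\gamma(x^k)$, and I would handle it through the stopping test: before the stop, $r^k\ne0$. Alternatively, one can note that then $\bar x^{k+1}=x^k$ and the fallback step is accepted.

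\textbf{Item (ii).} Every accepted step, whether from the line search or from the fallback via Theorem \ref{FISTA_SD_TH}, satisfies $\varphi^\gamma(x^{k+1})\le\varphi^\gamma(x^k)-c'\|z^k-y^k\|^2$ with $c'>0$ (here $c'$ plays the role of $|c|$ in the count). To pass from $z^k$ to the residual $r^k=y^k-w^k$, I use $z^k-y^k=(w^k-y^k)+\beta^k(w^k-w^{k-1})$ and $\beta^k<1$ (Lemma \ref{t^k}). This gives
\[
\|w^k-y^k\|^2\le2\|z^k-y^k\|^2+2\|w^k-w^{k-1}\|^2 .
\]
Summing for $k=0,\dots,N-1$ and telescoping against $\inf\varphi^\gamma=\inf\varphi$ yields
\[
\tfrac{c'}{2}\sum_{k<N}\|r^k\|^2\le\varphi^\gamma(x^0)-\inf\varphi+c'M .
\]
While the algorithm has not stopped, $\|r^k\|>\epsilon$, so $N\epsilon^2c'/2$ is bounded by the right-hand side. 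This is the stated bound, once signs are arranged so that $c$ enters as in the statement.

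\textbf{Item (iii).} This is the argument from Theorem \ref{Subsequential convergence}(ii). The inclusion $w^N\in\mathrm{prox}_{\gamma f}(2y^N-x^N-\gamma\nabla h(y^N))$ gives an element of $\hat\partial\varphi(w^N)$ of the form $\frac1\gamma(y^N-w^N)+(\nabla g(w^N)-\nabla g(y^N))+(\nabla h(w^N)-\nabla h(y^N))$. Its norm is at most $\frac{1+\gamma(L_g+L_h)}{\gamma}\|r^N\|\le\frac{1+\gamma(L_g+L_h)}{\gamma}\epsilon$, by the stopping rule.

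The main obstacle is item (i), specifically getting a \emph{strict} inequality at $\tau=0$ from the constant in the input condition on $c$, together with handling the sign of $C$ consistently. In item (ii), the corresponding difficulty is keeping track of the accumulated extrapolation term $M$.
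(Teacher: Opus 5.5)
Your proposal follows the paper's proof item by item. For (i) you use continuity of $\varphi^\gamma$ plus a strict inequality at $\bar x^{k+1}$. For (ii) you use the bound $\|r^k\|\le\|z^k-y^k\|+\|w^k-w^{k-1}\|$ (from $\beta^k<1$) and telescope against $\inf\varphi^\gamma=\inf\varphi$; your $c'$ is the paper's $-c$, and the resulting bound is identical. For (iii) you use the subdifferential estimate from Theorem~\ref{Subsequential convergence}. Your constant in (i) is in fact the correct one. Since $\|\bar x^{k+1}-x^k\|=\lambda\|z^k-y^k\|$, Theorem~\ref{FISTA_SD_TH} gives the factor $C\lambda^2/(1+\gamma L_g)^2$. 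The paper instead writes $C/(\lambda^2(1+\gamma L_g)^2)$, which follows from the correct factor only when $\lambda\ge1$. So reading the input condition as $C\lambda^2/(1+\gamma L_g)^2<c<0$ is the right repair; the sign $c<0$ is also used silently in the paper's part (ii).

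The step that does not go through is your handling of the degenerate case $z^k=y^k$ when $i_{\max}=+\infty$. Neither patch works. First, $r^k\ne0$ does not force $z^k\ne y^k$: since $z^k-y^k=-r^k+\beta^k(w^k-w^{k-1})$, this difference can vanish for $k\ge1$, where $\beta^k>0$. Second, the fallback $x^{k+1}=\bar x^{k+1}$ is never reached when $i_{\max}=+\infty$. In this case $\bar x^{k+1}=x^k$, and the acceptance test becomes $\varphi^\gamma(x^k+\tau^k d^k)<\varphi^\gamma(x^k)$. With, for example, $d^k=0$, this test fails for every $\tau^k$, so the inner loop genuinely never terminates. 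Hence (i) cannot be established without an extra hypothesis (such as $z^k\ne y^k$) or a change to the algorithm (for instance, setting $x^{k+1}=\bar x^{k+1}$ whenever $z^k=y^k$). The paper's proof has exactly the same hole: it records only $\|r^k\|>0$ and then uses the strict inequality \eqref{Well defindness (i) 2}, which requires $\|z^k-y^k\|>0$. So your argument is no weaker than the paper's, but you should not claim that this case is handled.
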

\begin{proof}
    If $\|r^k\|=0$, then from the Step 5 of Algorithm \ref{FISTA-DYS-LS}, our algorithm will stop. Now, suppose that $\|r^k\|>0.$ From Theorem \ref{FISTA_SD_TH} and the Step 11 of Algorithm \ref{FISTA-DYS-LS}, we have 
    \begin{align}
        \notag
        \varphi^{\gamma}(\bar{x}^{k+1}) & \le \varphi^{\gamma}(x^k) +\frac{C}{(1+\gamma L_g)^2}\|\bar{x}^{k+1}-x^k\|^2\\
        & \le \varphi^{\gamma}(x^k)+\frac{C}{\lambda^2(1+\gamma L_g)^2}\|z^k-y^k\|^2,\label{Well defindness (i) 1}
    \end{align}
    for some $C<0.$ Now, in Algorithm \ref{FISTA-DYS-LS}, since $c>\frac{C}{\lambda^2(1+\gamma L_g)^2}$, so we get, $\varphi^{\gamma}(x^k)+c\|z^k-y^k\|^2>\varphi^{\gamma}(x^k)+\frac{C}{\lambda^2(1+\gamma L_g)^2}\|z^k-y^k\|^2.$ Replacing this strict inequality in \eqref{Well defindness (i) 1} we get,
    \begin{equation}\label{Well defindness (i) 2}
        \varphi^{\gamma}(\bar{x}^{k+1})<\varphi^{\gamma}(x^k)+c\|z^k-y^k\|^2.
    \end{equation}
    
    Using strict continuity of $\varphi^\gamma$ at $\bar{x}^{k+1}$ by Proposition \ref{Strict continuity} in \eqref{Well defindness (i) 2}, we say that, for all $x$, $\varepsilon$-close($\varepsilon>0$) to $\bar{x}^{k+1}$, $\varphi^{\gamma}(x)\le\varphi^{\gamma}(x^k)+c\|z^k-y^k\|^2.$ Now, from Algorithm \ref{FISTA-DYS-LS} we have $x^{k+1}=(1-\tau^k)\bar{x}^{k+1}+\tau^k(x^k+d^k).$ So, we have, $x^{k+1}\to\bar{x}^{k+1}$ as $\tau^k\to0.$ By repeatedly halving as discussed in Algorithm \ref{FISTA-DYS-LS}, we reach the following statements:
    \begin{itemize}
        \item [$\bullet$] $x^{k+1}$ is eventually $\varepsilon$-close to $\bar{x}^{k+1}.$
        \item [$\bullet$] If $i_{max}$ is finite, then the maximum number of backtracking is reached, and then we choose $x^{k+1}=\bar{x}^{k+1}.$
    \end{itemize}
    
    From these two statements, we reach what we wanted to prove.

    Now, we say that, after some finite backtracking, the following inequality is satisfied:
    \begin{equation}\label{Well defindness (ii) 1}
        \varphi^{\gamma}(x^{k+1})<\varphi^{\gamma}(x^k)+c\|z^k-y^k\|^2.
    \end{equation}

    Let us consider that, at $N^{\text{th}}$ iteration, we reached step 5 of Algorithm \ref{FISTA-DYS-LS}. So, we say, $\|r^k\|> \epsilon$, for all $k<N.$ Now, we have,
    \begin{align}
        \notag
        \|r^k\|&=\|y^k-w^k\|\\\notag
        &=\|y^k-z^k+z^k-w^k\|\\\notag
        &=\|y^k-z^k+\beta^k(w^k-w^{k-1})\|\\
        \Rightarrow\;\|r^k\|&\le\|y^k-z^k\|+\|w^k-w^{k-1}\|.\\
        &\qquad\qquad\quad[\text{By using the triangle inequality of norm and Lemma \ref{t^k}}]
        \label{Well definedness (ii) 2}
    \end{align}

    Now, squaring both sides of \eqref{Well definedness (ii) 2}, we get for all $k<N,$
    \allowdisplaybreaks{
    \begin{align}
        \notag
        & \|r^k\|^2\le\|y^k-z^k\|^2+\|w^k-w^{k-1}\|^2+2\|y^k-z^k\|\|w^k-w^{k-1}\|\\\notag
        \Rightarrow\; & c\|r^k\|^2\ge c\|y^k-z^k\|^2+c\|w^k-w^{k-1}\|^2+2c\|y^k-z^k\|\|w^k-w^{k-1}\|\\\notag
        \Rightarrow\; & c\epsilon^2\ge c\|y^k-z^k\|^2+c\|w^k-w^{k-1}\|^2+2c\|y^k-z^k\|\|w^k-w^{k-1}\|\\\notag
        &\qquad\qquad\qquad\qquad\qquad\qquad\qquad\qquad\qquad[\text{Since }\|r^k\|> \epsilon\text{ and }c<0]\\\notag
        \Rightarrow\; & c\|y^k-z^k\|^2\le c\epsilon^2-c\|w^k-w^{k-1}\|^2-2c\|y^k-z^k\|\|w^k-w^{k-1}\|\\\notag
        \Rightarrow\; & c\|y^k-z^k\|^2\le c\epsilon^2-c\|w^k-w^{k-1}\|^2-c\|y^k-z^k\|^2-c\|w^k-w^{k-1}\|^2\\\notag
        \Rightarrow\; & 2c\|y^k-z^k\|^2\le c\epsilon^2-2c\|w^k-w^{k-1}\|^2\\
        \Rightarrow\; & c\|y^k-z^k\|^2\le \frac{c\epsilon^2}{2}-c\|w^k-w^{k-1}\|^2.\label{Well definedness (ii) 3}
    \end{align}}

    Then, using summation from $k=0$ to $k=N-1$ in both sides of \eqref{Well definedness (ii) 3}, we get
    \begin{equation}
        c\sum_{k=0}^{N-1}\|y^k-z^k\|^2\le\frac{cN\epsilon^2}{2}-c\sum_{k=0}^{N-1}\|w^k-w^{k-1}\|^2.\label{Well definedness (ii) 4}
    \end{equation}

    Since, $\sum_{k=0}^{+\infty}\|w^k-w^{k-1}\|^2<+\infty$ then for any $N\in\mathbb{R}_+$, $\sum_{k=0}^{N-1}\|w^k-w^{k-1}\|^2<+\infty$ and let $\sum_{k=0}^{N-1}\|w^k-w^{k-1}\|^2=M$, where $0\le M<+\infty.$ So, from \eqref{Well definedness (ii) 4}, we get 
    \begin{equation}
        c\sum_{k=0}^{N-1}\|y^k-z^k\|^2\le\frac{cN\epsilon^2}{2}-cM.\label{Well definedness (ii) 5}
    \end{equation}

    Now, using summation from $k=0$ to $k=N$ in both sides of \eqref{Well defindness (ii) 1} and then using \eqref{Well definedness (ii) 5}, we have
    \begin{align*}
        &\varphi^{\gamma}(x^{N+1})<\varphi^{\gamma}(x^0)+c\sum_{k=0}^N\|z^k-y^k\|^2\\
        \Rightarrow\; & \inf \varphi<\varphi^{\gamma}(x^0)+c\sum_{k=0}^{N-1}\|z^k-y^k\|^2\\
        \Rightarrow\; & \inf\varphi<\varphi^{\gamma}(x^0)+\frac{cN\epsilon^2}{2}-cM\;\;\;\;\;[\text{By using \eqref{Well definedness (ii) 5}}]\\
        \Rightarrow\; & N<\frac{2}{c\epsilon^2}\left(\inf\varphi-\varphi^\gamma(x^0)+cM\right).
    \end{align*}
    
    Hence, (ii) is proved, and the proof of (iii) is followed by Theorem \ref{Subsequential convergence}.
    
\end{proof}

After discussing the well-definedness of Algorithm \ref{FISTA-DYS-LS}, we now analyze the asymptotic behavior of the sequence $\{(x^k,y^k,w^k)\}$, generated by Algorithm \ref{FISTA-DYS-LS}. In Theorem \ref{Subsequential convergence for FISTA-DYS-LS}, we discuss the subsequential convergence property for the generated sequence. In this theorem, we focus on the characterization of the cluster points of the sequences $\{y^k\}$ and $\{w^k\}$ and the boundedness of the generated sequence.

\begin{thm}[Subsequential Convergence]\label{Subsequential convergence for FISTA-DYS-LS}
    Under Assumption \ref{Assumption 1}, consider the iterates generated by Algorithm \ref{FISTA-DYS-LS} with $\epsilon = 0$. It follows that $\sum_{k\in\mathbb{N}}\|r^k\|^2$ is finite. It also follows that
    \begin{enumerate}
        \item[(i)] Sequences $\{y^k\}$ and $\{w^k\}$ have the same cluster points, all of which are stationary for $\varphi$ and on which $\varphi$ has a finite value, this being the limit of $\{\varphi^\gamma(x^k)\}$.
        \item[(ii)] If $\varphi$ has bounded level sets, then the sequence $\{(x^k, y^k, w^k)\}$ is bounded.
    \end{enumerate}
\end{thm}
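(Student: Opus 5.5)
The plan is to follow the structure of the proof of Theorem \ref{Subsequential convergence}, replacing the FISTA-type DYS descent inequality of Theorem \ref{FISTA_SD_TH} with the line-search acceptance test. The first step is to note that, for every $k$, the accepted iterate satisfies one of two inequalities. If the line-search test succeeds, we have
\begin{equation*}
\varphi^\gamma(x^{k+1})<\varphi^\gamma(x^k)+c\|z^k-y^k\|^2 .
\end{equation*}
If instead the fallback $x^{k+1}=\bar{x}^{k+1}$ is used, then \eqref{Well defindness (i) 1} gives the same bound with the constant $\frac{C}{\lambda^2(1+\gamma L_g)^2}$. In both cases $\varphi^\gamma(x^{k+1})\le\varphi^\gamma(x^k)-\kappa\|z^k-y^k\|^2$ with $\kappa:=\min\{|c|,\,|C|/(\lambda^2(1+\gamma L_g)^2)\}>0$. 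Here we use $c<0$, as the proof of Theorem \ref{Well definedness} implicitly does. Telescoping and invoking $\inf\varphi^\gamma=\inf\varphi>-\infty$ (Proposition \ref{Minimization and level-boundedness equivalence}(i)) gives $\sum_k\|z^k-y^k\|^2<\infty$. It also shows that $\{\varphi^\gamma(x^k)\}$ is nonincreasing and convergent.

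Next we pass from $z^k-y^k$ to $r^k$. As in \eqref{Well definedness (ii) 2}, we have $\|r^k\|\le\|z^k-y^k\|+\beta^k\|w^k-w^{k-1}\|$ with $\beta^k<1$ by Lemma \ref{t^k}, hence $\|r^k\|^2\le 2\|z^k-y^k\|^2+2\|w^k-w^{k-1}\|^2$. We need $\sum_k\|w^k-w^{k-1}\|^2<\infty$. To get it, bound $\|w^k-w^{k-1}\|\le\frac{1+\gamma(L_g+L_h)}{1-\gamma\beta_f}\|y^k-y^{k-1}\|$ via \eqref{F-SD 5}, and $\|y^k-y^{k-1}\|\le\frac{1}{1+\gamma\sigma_g}\|x^k-x^{k-1}\|$ by Proposition \ref{Proximal properties of smooth functions}(ii).

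\textbf{The main obstacle} is controlling $\|x^{k+1}-x^k\|$ when a quasi-Newton step is accepted. There $x^{k+1}-x^k=(1-\tau^k)\lambda(z^k-y^k)+\tau^kd^k$, and $d^k$ is not a priori tied to the residual. I would first handle this as in \cite{themelis2022douglas}: impose (or observe from $d^k=-H^kr^k$ with bounded $H^k$) a bound $\|d^k\|\le D\|r^k\|$. Combined with the previous inequalities, this gives a recursion of the form $\|r^k\|^2\le a\|z^k-y^k\|^2+b\|r^{k-1}\|^2+\dots$. If that is too weak, the fallback is to treat $\|w^k-w^{k-1}\|^2$ as a summable error term, exactly as was done in Theorem \ref{FISTA_SD_TH} and Theorem \ref{Well definedness}(ii), where $M<\infty$ is used. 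Either route yields $\sum_k\|r^k\|^2<\infty$, so $r^k=y^k-w^k\to0$.

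With $r^k\to0$, item (i) repeats the argument of Theorem \ref{Subsequential convergence}(ii) verbatim. $\{y^k\}$ and $\{w^k\}$ share cluster points. Along a subsequence, $x^k=y^k+\gamma\nabla g(y^k)\to\tilde x$, so $\tilde y=\mathrm{prox}_{\gamma g}(\tilde x)$. Outer semicontinuity of $\mathrm{prox}_{\gamma f}$ gives $-\nabla g(\tilde y)-\nabla h(\tilde y)\in\hat\partial f(\tilde y)$, i.e.\ stationarity. The sandwich of Proposition \ref{Sandwiching property}, lower semicontinuity of $\varphi$ and strict continuity of $\varphi^\gamma$ (Proposition \ref{Strict continuity}) give $\varphi(\tilde y)=\lim\varphi^\gamma(x^k)$, which is finite. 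For item (ii), monotonicity of $\varphi^\gamma(x^k)$ places every $x^k$ in $\mathrm{lev}_{\le\varphi^\gamma(x^0)}\varphi^\gamma$. This set is bounded by Proposition \ref{Minimization and level-boundedness equivalence}(iii). Boundedness of $\{y^k\}$ then follows by Lipschitz continuity of $\mathrm{prox}_{\gamma g}$, and boundedness of $\{w^k\}$ from $r^k\to0$.
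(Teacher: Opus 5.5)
Your outline is the same as the paper's, whose entire proof is a deferral to Theorem~\ref{Subsequential convergence}. However, the step you flagged is a genuine gap, and neither of your routes closes it: nothing establishes $\sum_k\|w^k-w^{k-1}\|^2<\infty$ for the iterates of Algorithm~\ref{FISTA-DYS-LS}. The acceptance test controls only $\|z^k-y^k\|$. The accepted step is $x^{k+1}-x^k=(1-\tau^k)\lambda(z^k-y^k)+\tau^kd^k$, and the test does not see the $d^k$-part.

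Your first route (bounding $\|d^k\|\le D\|r^k\|$) adds a hypothesis the theorem lacks. The directions in the theorem are arbitrary, and boundedness of $\{H^k\}$ is assumed only in Theorem~\ref{Superlinear convergence}. Even granting the bound, chaining your own estimates gives only
\[
\|r^k\|\le\|z^k-y^k\|+K\lambda\|z^{k-1}-y^{k-1}\|+KD\|r^{k-1}\|,\qquad K=\tfrac{1+\gamma(L_g+L_h)}{(1-\gamma\beta_f)(1+\gamma\sigma_g)}.
\]
This yields square-summability of $\{r^k\}$ only if $KD<1$. Since $K\ge1$ whenever $\beta_f\ge0$, that is a smallness condition on quasi-Newton directions which nothing provides.

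Your fallback route (treating $\|w^k-w^{k-1}\|^2$ as a summable error) is circular. The summability \eqref{F-SD 8} was obtained from \eqref{F-SD 7}, which was derived for the plain recursion $x^{k+1}-x^k=\lambda(z^k-y^k)$ of Algorithm~\ref{FISTA-DYS}. In Theorem~\ref{Well definedness}(ii) the finiteness of $M$ is simply asserted.

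Without $r^k\to0$, all of item (i) is unsupported: common cluster points, stationarity, and $\varphi(\tilde y)=\lim\varphi^\gamma(x^k)$ through the sandwich. The paper's one-line proof has the same hole, because Theorem~\ref{Subsequential convergence} rests on \eqref{F-SD 8}. The same uncontrolled quantity also sits under your first step. The fallback bound \eqref{Well defindness (i) 1} is the per-step inequality of Theorem~\ref{FISTA_SD_TH} started from a possibly quasi-Newton iterate, and its derivation \eqref{F-SD 6} carries a positive term proportional to $\|y^k-y^{k-1}\|^2$.

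Item (ii) survives without $r^k\to0$. Monotonicity of $\varphi^\gamma(x^k)$ together with Proposition~\ref{Sandwiching property}(ii) gives $\frac{1-\gamma(L_g+L_h)}{2\gamma}\|r^k\|^2\le\varphi^\gamma(x^0)-\inf\varphi$, so $\{w^k\}$ is bounded.

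For item (i) you need a safeguard linking the step to the quantity the test actually measures. One option is to admit only directions with $\|d^k\|\le D\|z^k-y^k\|$. Then $\|x^{k+1}-x^k\|\le\max\{\lambda,D\}\|z^k-y^k\|$, so $\|w^{k+1}-w^k\|\le K\max\{\lambda,D\}\|z^k-y^k\|$ is square-summable, and your second step finishes the argument.

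In \cite{themelis2022douglas} this issue never arises. There the nominal DRS step is a multiple of the residual, so a test on the nominal step is a test on $\|r^k\|^2$. The inertial term $\beta^k(w^k-w^{k-1})$ is exactly what breaks that identification here.
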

\begin{proof}
    This proof is followed by Theorem \ref{Subsequential convergence}.
\end{proof}

After the discussion of subsequential convergence, we now focus on the rate of convergence of Algorithm \ref{FISTA-DYS-LS}. To establish the superlinear convergence rate for Algorithm \ref{FISTA-DYS-LS}, we need to study the behavior of the objective function near a strong local minimizer and the search direction generated by the modified Broyden update rule. In the following lemmas, we focus on these results.

\begin{lem}\label{Strong convexity type lemma}
    Under Assumption \ref{Assumption 1}, consider the iterates generated by Algorithm \ref{FISTA-DYS-LS} with $\epsilon = 0$, and let $\varphi^{\star}$ be the limit point of $\left\{\varphi^{\gamma}(x^k)\right\}$. If $\{y^k\}$ converges to the strong local minimizer $y^{\star}$ of $\varphi$, then for some $\mu>0$, we have $\varphi^{\gamma}(x^k)-\varphi^{\star}\ge\frac{\mu}{2}\|x^k-x^{\star}\|^2$ for sufficiently large $k$, with $x^{\star} = y^{\star} + \gamma \nabla g(y^{\star}).$
\end{lem}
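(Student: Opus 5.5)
We lower-bound $\varphi^{\gamma}(x^k)$ by the Sandwiching property, use the quadratic growth of $\varphi$ at the strong local minimizer, and then pass from $y^k$ back to $x^k$ using the Lipschitz continuity of $I+\gamma\nabla g$. Throughout, ``strong local minimizer'' is taken to mean that there are $\rho>0$ and $\mu_0>0$ with $\varphi(y)\ge\varphi(y^{\star})+\frac{\mu_0}{2}\|y-y^{\star}\|^2$ for all $y\in B(y^{\star};\rho)$.

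\emph{Step 1: identify $\varphi^\star$ and the limits.} Since $y^k\to y^{\star}$, the relation $x^k=y^k+\gamma\nabla g(y^k)$ together with continuity of $\nabla g$ gives $x^k\to x^{\star}=y^{\star}+\gamma\nabla g(y^{\star})$. Theorem \ref{Subsequential convergence for FISTA-DYS-LS} gives $\|r^k\|=\|y^k-w^k\|\to0$, hence $w^k\to y^{\star}$. By part (i) of the same theorem, $\varphi^{\star}=\lim\varphi^{\gamma}(x^k)=\varphi(y^{\star})$. Alternatively, Proposition \ref{Strict continuity} gives $\varphi^\gamma(x^k)\to\varphi^\gamma(x^\star)$, and the sandwich argument from the proof of Theorem \ref{Subsequential convergence} gives $\varphi^\gamma(x^\star)=\varphi(y^\star)$.

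\emph{Step 2: lower bound via the sandwich.} By Proposition \ref{Sandwiching property}(ii) applied at $x^k$ with $z=w^k$,
$$\varphi^{\gamma}(x^k)-\varphi^{\star}\ge \varphi(w^k)-\varphi(y^{\star})+\tfrac{1-\gamma(L_g+L_h)}{2\gamma}\|w^k-y^k\|^2 .$$
For $k$ large, $w^k\in B(y^{\star};\rho)$, so quadratic growth yields $\varphi(w^k)-\varphi(y^\star)\ge\frac{\mu_0}{2}\|w^k-y^{\star}\|^2$. Setting $\kappa=\frac{1-\gamma(L_g+L_h)}{\gamma}>0$ and using $\|a\|^2+\|b\|^2\ge\frac12\|a+b\|^2$, we get
$$\varphi^{\gamma}(x^k)-\varphi^\star\ge \tfrac{\min\{\mu_0,\kappa\}}{4}\|y^k-y^{\star}\|^2 .$$

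\emph{Step 3: from $y$ to $x$.} By Proposition \ref{Proximal properties of smooth functions}(ii), $I+\gamma\nabla g$ is $(1+\gamma L_g)$-Lipschitz, so $\|x^k-x^{\star}\|\le(1+\gamma L_g)\|y^k-y^{\star}\|$. This gives the claim with
$$\mu=\frac{\min\{\mu_0,\kappa\}}{2(1+\gamma L_g)^2}.$$

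\emph{Main obstacle.} The delicate step is Step 1, namely ensuring $\varphi^\star=\varphi(y^\star)$ and that $w^k$ eventually enters the growth neighborhood. Both hinge on the vanishing residual from Theorem \ref{Subsequential convergence for FISTA-DYS-LS}. If that vanishing is in doubt along the line-search iterates, one can still close the argument: the sequence $\varphi^\gamma(x^k)$ is (essentially) nonincreasing by the line-search acceptance test, so $\varphi^\gamma(x^k)\ge\varphi^\star$ for all $k$, and the sandwich bound controls $\|w^k-y^k\|$ directly.
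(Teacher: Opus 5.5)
Your proof is correct and follows the paper's argument essentially step for step: the sandwich bound at $w^k$, quadratic growth at $y^\star$ (using $w^k\to y^\star$ and $\varphi^\star=\varphi(y^\star)$ from Theorem~\ref{Subsequential convergence for FISTA-DYS-LS}), absorption of $\|w^k-y^k\|^2$, and the $(1+\gamma L_g)$-Lipschitz passage from $y^k$ to $x^k$; the only difference is that you absorb the residual via $\min\{\mu_0,\kappa\}$ and $\|a\|^2+\|b\|^2\ge\frac12\|a+b\|^2$, whereas the paper uses a weighted Young inequality with the tuned weight $\eta=\gamma\delta/(1-\gamma(L_g+L_h))$, which gives a slightly better (never worse) $\mu$. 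Your closing fallback remark does not work as stated, however: monotonicity of $\varphi^\gamma(x^k)$ and the sandwich bound only show that $\|w^k-y^k\|$ stays bounded, and putting $w^k$ into the growth ball that way would need $\liminf\varphi(w^k)\ge\varphi^\star$, which is circular for a merely local minimizer; an independent route that does work is that for $\gamma<1/\beta_f$ the map $\mathrm{prox}_{\gamma f}$ is single-valued and Lipschitz, so $w^k\to\mathrm{prox}_{\gamma f}\bigl(y^\star-\gamma\nabla g(y^\star)-\gamma\nabla h(y^\star)\bigr)=y^\star$ by stationarity of the local minimizer $y^\star$.
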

\begin{proof}
    We have $y^k = \mathrm{prox}_{\gamma g}(x^k)$. So, by $\frac{1}{1+\gamma L_g}$- strong monotonicity of $\mathrm{prox}_{\gamma g}$, we get
    \begin{equation}\label{SCTL 1}
        \|y^k-y^{\star}\|^2\ge\frac{1}{(1+\gamma L_g)^2}\|x^k-x^{\star}\|^2.
    \end{equation}
    
    Again, by the triangle inequality of the norm, we get
    \begin{align}
        \notag
        &\|y^k-y^{\star}\|\le\|y^k-w^k\|+\|w^k-y^{\star}\|\\\notag
        \Rightarrow\;&\|y^k-y^{\star}\|^2\le\|y^k-w^k\|^2+\|w^k-y^{\star}\|^2+2\|y^k-w^k\|\|w^k-y^{\star}\|\\\notag
        &\qquad\qquad\;\;\;\le\|y^k-w^k\|^2+\|w^k-y^{\star}\|^2+\frac{1}{\eta}\|y^k-w^k\|^2+\eta\|w^k-y^{\star}\|^2,\\\notag
        &\qquad\qquad\qquad\qquad\qquad\qquad\qquad\qquad\qquad\qquad\qquad\qquad\text{ where }\eta>0\\
        \Rightarrow\;&\|y^k-y^{\star}\|^2\le\frac{1+\eta}{\eta}\|y^k-w^k\|^2+(1+\eta)\|w^k-y^{\star}\|^2.\label{SCTL 2}
    \end{align}
    
    Now, from \eqref{SCTL 1} and \eqref{SCTL 2}, we say that
    \begin{equation}\label{SCTL 3}
        \|w^k-y^{\star}\|^2\ge\frac{1}{(1+\eta)(1+\gamma L_g)^2}\|x^k-x^{\star}\|^2-\frac{1}{\eta}\|y^k-w^k\|^2.
    \end{equation}
    
    Now, from Theorem \ref{Subsequential convergence for FISTA-DYS-LS}, we have $(y^k-w^k)\to0$ as $k\to+\infty$ and $\varphi(y^{\star})=\varphi^{\star}.$ Then $w^k\to y^{\star}$ and by strong local minimality $\exists\;\delta>0$ and $N\in\mathbb{N}$ s.t. $\varphi(w^k)-\varphi^{\star}\ge\frac{\delta}{2}\|w^k-y^{\star}\|^2,\;\forall k\ge N$. Now, from Proposition \ref{Sandwiching property}, we have $\varphi^{\gamma}(x^k)\ge\varphi(w^k)+\frac{1-\gamma(L_g+L_h)}{2\gamma}\|w^k-y^k\|^2$ for $\gamma<\frac{1}{L_g+L_h}$, which implies that
    \begin{equation}\label{SCTL 4}
        \varphi^{\gamma}(x^k)\ge\varphi^{\star}+\frac{\delta}{2}\|w^k-y^{\star}\|^2+\frac{1-\gamma(L_g+L_h)}{2\gamma}\|w^k-y^k\|^2.
    \end{equation}
    
    Now, substituting \eqref{SCTL 3} in \eqref{SCTL 4} we get, for all $\epsilon>0$ and $k\ge N$
    \begin{align*}
        \varphi^{\gamma}(x^k) & \ge \varphi^{\star}+\frac{\delta}{2}\left[\frac{1}{(1+\eta)(1+\gamma L_g)^2}\|x^k-x^{\star}\|^2-\frac{1}{\eta}\|y^k-w^k\|^2\right]\\
        &\qquad\qquad\qquad\qquad\qquad\qquad\qquad\qquad+\frac{1-\gamma(L_g+L_h)}{2\gamma}\|w^k-y^k\|^2,
    \end{align*}
    which implies that
    \begin{align}
        \varphi^{\gamma}(x^k) & \ge \varphi^{\star}+\frac{\delta}{2(1+\eta)(1+\gamma L_g)^2}\|x^k-x^{\star}\|^2+\left[\frac{1-\gamma(L_g+L_h)}{2\gamma}-\frac{\delta}{2\eta}\right]\|w^k-y^k\|^2.\label{SCTL 5}
    \end{align}
    
    If we choose $\eta=\frac{\gamma\delta}{1-\gamma(L_g+L_h)}>0,$ then from \eqref{SCTL 5}, we have $\varphi^{\gamma}(x^k)-\varphi^{\star}\ge\frac{\mu}{2}\|x^k-x^{\star}\|^2$, for all $k\ge N$, where $\mu=\frac{\delta}{(1+\eta)(1+\gamma L_g)^2}.$ Hence the proof is done.
\end{proof}
\begin{lem}\label{Acceptance of the unit stepsize}
    Under Assumption \ref{Assumption 1}, consider that the iterates generated by Algorithm \ref{FISTA-DYS-LS} and $\{y^k\}$ converge to a local strong minimum $y^{\star}$ of $\varphi$ and $\{d^k\}$ are superlinear directions (relative to a sequence $\{x^k\}$ converging to $x^{\star},$ where $x^{\star}=y^{\star}+\gamma\nabla g(y^\star)$), i.e., $\lim_{k\to+\infty}\frac{\|x^k+d^k-x^{\star}\|}{\|x^k-x^{\star}\|}=0$. Then after some finite iteration, the unit step size will be accepted, i.e., $\tau^k=1.$ Hence, we have $x^{k+1}=x^k+d^k,$ which guaranties superlinear convergence.
\end{lem}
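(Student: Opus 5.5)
The plan is to show that, for all large $k$, the trial point $x^{k+1}=x^k+d^k$ obtained with $\tau^k=1$ passes the test of Algorithm \ref{FISTA-DYS-LS}:
$$\varphi^\gamma(x^k+d^k)<\varphi^\gamma(x^k)+c\|z^k-y^k\|^2.$$
This is the Dennis--Moré type argument used for the DRE in \cite{themelis2022douglas}, with Lemma \ref{Strong convexity type lemma} supplying the quadratic lower bound and Proposition \ref{quadratic upper bound} supplying the quadratic upper bound. Throughout, $\varphi^\star=\varphi(y^\star)$ is the limit of $\{\varphi^\gamma(x^k)\}$ (Theorem \ref{Subsequential convergence for FISTA-DYS-LS}), and $x^\star=y^\star+\gamma\nabla g(y^\star)$.

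\textbf{Step 1 (quadratic upper bound at the trial point).} Let $u^k=\mathrm{prox}_{\gamma g}(x^k+d^k)$. Apply Proposition \ref{quadratic upper bound} with $\tilde y=y^\star$:
$$\varphi^\gamma(x^k+d^k)-\varphi^\star\le \frac{1+\gamma(L_g+L_h)}{2\gamma}\|u^k-y^\star\|^2.$$
Since $y^\star=\mathrm{prox}_{\gamma g}(x^\star)$, Proposition \ref{Proximal properties of smooth functions}(ii) gives $\|u^k-y^\star\|\le\frac{1}{1+\gamma\sigma_g}\|x^k+d^k-x^\star\|$. By the superlinearity hypothesis this is $o(\|x^k-x^\star\|)$. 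Hence $\varphi^\gamma(x^k+d^k)-\varphi^\star=o(\|x^k-x^\star\|^2)$.

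\textbf{Step 2 (quadratic lower bound at the current point).} By Lemma \ref{Strong convexity type lemma}, $\varphi^\gamma(x^k)-\varphi^\star\ge\frac{\mu}{2}\|x^k-x^\star\|^2$ for large $k$. Combining with Step 1, for large $k$,
$$\varphi^\gamma(x^k+d^k)\le \varphi^\gamma(x^k)-\Big(\tfrac{\mu}{2}-o(1)\Big)\|x^k-x^\star\|^2.$$

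\textbf{Step 3 (comparison with the test term).} It remains to show that $|c|\,\|z^k-y^k\|^2\le \kappa\|x^k-x^\star\|^2$ for some $\kappa<\mu/2$, or at least for some constant $\kappa$ that is absorbed as below. The test term is $c\|z^k-y^k\|^2$ with $c<0$.
\begin{itemize}
\item By Step 11, $\bar x^{k+1}-x^k=\lambda(z^k-y^k)$.
\item Nonexpansiveness-type bounds control $\|z^k-y^k\|$ by $\|w^k-y^k\|+\|w^k-w^{k-1}\|$.
\item Each of these is controlled by $\|x^k-x^\star\|+\|x^{k-1}-x^\star\|$. This uses Lipschitz continuity of $\mathrm{prox}_{\gamma g}$, the bound \eqref{F-SD 5}, and the fixed-point relations $y^\star=\mathrm{prox}_{\gamma g}(x^\star)$ and $y^\star\in\mathrm{prox}_{\gamma f}(2y^\star-x^\star-\gamma\nabla h(y^\star))$. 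For the $\mathrm{prox}_{\gamma f}$ step, the Lipschitz estimate of \cite[Lemma 3.1]{liu2019envelope} applies since $\gamma<1/\beta_f$.
\end{itemize}

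\textbf{Main obstacle: the lag term.} The term in $\|x^{k-1}-x^\star\|$, inherited from the FISTA momentum $w^k-w^{k-1}$, is the step I expect to be hardest. I would handle it inductively. Once unit steps have been accepted from some index $k_0$ on, superlinearity gives $\|x^k-x^\star\|=o(\|x^{k-1}-x^\star\|)$. This is the wrong direction for bounding $\|x^{k-1}-x^\star\|$ by $\|x^k-x^\star\|$, so instead I would compare the test at step $k$ against the decrease measured from $\varphi^\gamma(x^{k})$ using Lemma \ref{Strong convexity type lemma} at index $k$. The first fallback is to choose $|c|$ small relative to $\mu$ and the Lipschitz constants; the admissible range $c\in(C/(\lambda^2(1+\gamma L_g)^2),0)$ allows $c$ arbitrarily close to $0$. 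With $|c|\kappa<\mu/2$, Steps 2--3 give strict acceptance of $\tau^k=1$.

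\textbf{Conclusion.} Acceptance of $\tau^k=1$ gives $x^{k+1}=x^k+d^k$ for all large $k$. The superlinear-direction hypothesis then yields $\|x^{k+1}-x^\star\|/\|x^k-x^\star\|\to0$, and Lipschitz continuity of $\mathrm{prox}_{\gamma g}$ transfers this rate to $y^k$.
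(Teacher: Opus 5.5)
Your Steps 1--2 reproduce the paper's computation that $\varepsilon^k=\frac{\varphi^\gamma(x^k+d^k)-\varphi^\star}{\varphi^\gamma(x^k)-\varphi^\star}\to0$. Step 3, however, is a genuine gap that you do not close.

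Your own estimates give $\|z^k-y^k\|^2\lesssim\|x^k-x^\star\|^2+\|x^{k-1}-x^\star\|^2$. In exactly the regime the lemma is about, unit steps are accepted, so $\|x^k-x^\star\|=o(\|x^{k-1}-x^\star\|)$. Then the quotient $\|z^k-y^k\|^2/\|x^k-x^\star\|^2$ is unbounded, and no constant $\kappa$ exists. Shrinking $|c|$ cannot compensate, and in any case $c$ is fixed at input, before $\mu$ is known. Your alternative of ``comparing against the decrease measured from $\varphi^\gamma(x^k)$'' is never turned into an argument.

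The paper avoids comparing $\|z^k-y^k\|$ with $\|x^k-x^\star\|$ altogether. It invokes the safeguard decrease \eqref{Well defindness (i) 2}, $\varphi^\gamma(\bar x^{k+1})<\varphi^\gamma(x^k)+c\|z^k-y^k\|^2$. It combines this with $\varphi^\gamma(\bar x^{k+1})\ge\varphi^\star$ for large $k$, which follows from $\bar x^{k+1}\to x^\star$, Proposition \ref{Sandwiching property}(ii) and local minimality of $y^\star$. This gives $|c|\,\|z^k-y^k\|^2<\varphi^\gamma(x^k)-\varphi^\star$, and the paper concludes via $\varphi^\gamma(x^k+d^k)-\varphi^\gamma(x^k)=-(1-\varepsilon^k)(\varphi^\gamma(x^k)-\varphi^\star)$. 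Dominating the test term by the value gap through the nominal step is the idea missing from your proposal.

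That said, your suspicion of the lag term is correct, and the paper's route only hides it. Combined with Proposition \ref{quadratic upper bound}, the paper's bound would give precisely your missing estimate: $|c|\,\|z^k-y^k\|^2<\varphi^\gamma(x^k)-\varphi^\star\le\frac{1+\gamma(L_g+L_h)}{2\gamma(1+\gamma\sigma_g)^2}\|x^k-x^\star\|^2$. But $z^k-y^k=\beta^k(y^\star-w^{k-1})+O(\|x^k-x^\star\|)$ with $\beta^k\to1$. Moreover, \eqref{Well defindness (i) 2} rests on the per-step form of Theorem \ref{FISTA_SD_TH}, whose proof discards the positive $\|y^k-y^{k-1}\|^2$ term of \eqref{F-SD 6} as a ``summable error''.

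In fact the conclusion fails for $c<0$. Take $n=1$, $g\equiv0$, $f=\frac{a}{2}u^2$, $h=\frac{b}{2}u^2$ with $a,b>0$ and $\gamma b<1$. Then $y^k=x^k$ and $w^k=\rho x^k$ with $\rho=\frac{1-\gamma b}{1+\gamma a}\neq0$, $x^\star=y^\star=0$, and $0\le\varphi^\gamma(x)\le\frac{a+b}{2}x^2$ by Proposition \ref{Sandwiching property}. Suppose the unit step is accepted at iteration $k-1$. By superlinearity of the directions, $x^k=x^{k-1}+d^{k-1}=\theta_{k-1}x^{k-1}$ with $|\theta_{k-1}|\to0$. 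Hence $z^k-y^k=-\beta^k\rho x^{k-1}+O(|\theta_{k-1}|\,|x^{k-1}|)$, and $\varphi^\gamma(x^k)+c|z^k-y^k|^2<0$ for large $k$. Since $\varphi^\gamma\ge0$, every trial point at iteration $k$ is then rejected, including $\bar x^{k+1}$, so \eqref{Well defindness (i) 2} itself fails there.

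Thus, along any run satisfying the hypotheses, unit steps are never accepted at two consecutive late iterations, and no argument can complete your Step 3 as posed. A provable version needs one of two changes:
\begin{itemize}
\item $c\ge0$: then your Steps 1--2 alone suffice, but Theorem \ref{Well definedness} relies on $c<0$.
\item A different safeguard and test: for example, a momentum-free nominal step with a genuine per-step decrease and a test term depending only on $x^k$, as in the DRS setting of \cite{themelis2022douglas}.
\end{itemize}
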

\begin{proof}
    From Lemma \ref{Strong convexity type lemma}
    \begin{equation}\label{AUS lemma (i)}
        \varphi^{\gamma}(x^k)-\varphi^{\gamma}(x^{\star})\ge\frac{\mu}{2}\|x^k-x^{\star}\|^2,\;\text{for some }\mu>0,
    \end{equation} 
    where $x^{\star}=y^{\star}+\gamma\nabla g(y^{\star})$ and $x^{\star}$ is limit point of $\{x^k\}.$

    Let,
    \begin{align}
        \notag
        \varepsilon^k & = \frac{\varphi^{\gamma}(x^k+d^k)-\varphi^{\gamma}(x^{\star})}{\varphi^{\gamma}(x^k)-\varphi^{\gamma}(x^{\star})}\\\notag
        & = \frac{\varphi^{\gamma}(x^k+d^k)-\varphi(y^{\star})}{\varphi^{\gamma}(x^k)-\varphi^{\gamma}(x^{\star})}\;\;\;\;\;[\text{By using local minimality at }y^{\star}\text{ of }\varphi\text{ and }\\\notag
        &\qquad\qquad\qquad\qquad\qquad\qquad\qquad\qquad\qquad\qquad\qquad\;\;\;\;\text{Proposition \ref{Minimization and level-boundedness equivalence}}]\\\notag
        & \le \frac{1+\gamma(L_g+L_h)}{\gamma\mu}\frac{\|\mathrm{prox}_{\gamma g}(x^k+d^k)-\mathrm{prox}_{\gamma g}(x^{\star})\|^2}{\|x^k-x^{\star}\|^2}\\\notag
        &\qquad\qquad\qquad\qquad\qquad\qquad\;\;\;\;[\text{Combining Proposition \ref{quadratic upper bound} and \eqref{AUS lemma (i)}}]\\
        \Rightarrow\;\varepsilon^k& \le \frac{1+\gamma(L_g+L_h)}{\gamma\mu(1+\gamma \sigma_g)^2}\frac{\|x^k+d^k-x^{\star}\|^2}{\|x^k-x^{\star}\|^2}\;\;[\text{By using Proposition \ref{Proximal properties of smooth functions} }].\label{AUS lemma (ii)}
    \end{align}

    Now, since $\{d^k\}$ are superlinear directions, $\varepsilon^k\to0$ as $k\to+\infty$. Now, from Theorem \ref{Subsequential convergence for FISTA-DYS-LS}, we have, $(w^k-y^k)\to0$ as $k\to+\infty$, which implies that $z^k-y^k=w^k+\beta^k(w^k-w^{k-1})-y^k\to0$ as $k\to+\infty$, because of $\sum_{k\in\mathbb{N}}(w^{k+1}-w^k)<+\infty.$ Hence, we get $\bar{x}^{k+1}\to x^{\star}$ as $k\to+\infty.$ Now, since $\varphi^{\gamma}$ is strictly continuous, $\varphi^{\gamma}(\bar{x}^{k+1})\to\varphi^{\gamma}(x^{\star})=\varphi^{\star}$ as $k\to+\infty$. Therefore, eventually $\varphi^{\gamma}(\bar{x}^{k+1})\ge\varphi^{\star}=\varphi^{\gamma}(x^{\star})$ and $\varepsilon^k\le1.$

    Now, 
    \begin{align}
        \notag
        \varphi^{\gamma}(x^k+d^k)-\varphi^{\gamma}(x^k) & = \varphi^{\gamma}(x^k+d^k)-\varphi^{\gamma}(x^{\star})-(\varphi^{\gamma}(x^k)-\varphi^{\gamma}(x^{\star}))\\\notag
        & = -(1-\varepsilon^k)(\varphi^{\gamma}(x^k)-\varphi^{\gamma}(x^{\star}))\\\notag
        & \le -(1-\varepsilon^k)(\varphi^{\gamma}(x^k)-\varphi^{\gamma}(\bar{x}^{k+1}))\\
        & < c\|z^k-y^k\|^2\;\;\;\;\;[\text{By using \eqref{Well defindness (i) 2} and }\varepsilon^k\le1].\label{AUS lemma (iii)}
    \end{align}

    Hence, we eventually choose $\tau^k=1$, i.e., $x^{k+1}=x^k+d^k$, for which the sufficient decrease property is satisfied.
\end{proof}
\begin{lem}\label{Superlinear convergence for direction}
    Under Assumption \ref{Assumption 1}, consider the iterates generated by Algorithm \ref{FISTA-DYS-LS} and $\{x^k\}$ converge to $x^{\star}$ such that $R_{\gamma}=\mathrm{prox}_{\gamma g}-\mathrm{prox}_{\gamma f}(2\mathrm{prox}_{\gamma g}-I-\gamma\nabla h)$ is strictly differentiable and the Jacobian $JR_{\gamma}(x^{\star})$ is non-singular. Then, under the Dennis-Mor\'{e} condition
    \begin{equation}\label{Dennis-More condition}
        \lim_{k\to+\infty}\frac{\|R_{\gamma}(x^k)+JR_{\gamma}(x^{\star})d^k\|}{\|d^k\|}=0,
    \end{equation}
    the directions $\{d^k\}$ are superlinear convergent.
\end{lem}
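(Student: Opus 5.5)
The plan follows the standard Dennis--Moré argument, as in \cite{themelis2022douglas}. The goal is to show that $\|x^k+d^k-x^{\star}\|=o(\|x^k-x^{\star}\|)$.

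First, $R_{\gamma}(x^{\star})=0$. By Theorem \ref{Subsequential convergence for FISTA-DYS-LS}, $r^k=y^k-w^k\to0$, and $R_{\gamma}(x^k)=y^k-w^k$ whenever the prox is single-valued near $x^{\star}$; the strict differentiability assumption gives this. Since $x^k\to x^{\star}$ and $R_{\gamma}$ is continuous at $x^{\star}$ (strict differentiability implies strict continuity), we get $R_{\gamma}(x^{\star})=0$.

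Strict differentiability of $R_{\gamma}$ at $x^{\star}$ then gives, for any sequences $u^k,v^k\to x^{\star}$,
\begin{equation*}
R_{\gamma}(u^k)-R_{\gamma}(v^k)-JR_{\gamma}(x^{\star})(u^k-v^k)=o(\|u^k-v^k\|).
\end{equation*}
Applying this with $u^k=x^k$ and $v^k=x^{\star}$ yields $R_{\gamma}(x^k)=JR_{\gamma}(x^{\star})(x^k-x^{\star})+o(\|x^k-x^{\star}\|)$.

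Next I write
\begin{equation*}
JR_{\gamma}(x^{\star})(x^k+d^k-x^{\star}) = \bigl[R_{\gamma}(x^k)+JR_{\gamma}(x^{\star})d^k\bigr] - \bigl[R_{\gamma}(x^k)-JR_{\gamma}(x^{\star})(x^k-x^{\star})\bigr].
\end{equation*}
The first bracket is $o(\|d^k\|)$ by \eqref{Dennis-More condition}, and the second is $o(\|x^k-x^{\star}\|)$. Nonsingularity of $JR_{\gamma}(x^{\star})$ gives a constant $\kappa>0$ with $\|JR_{\gamma}(x^{\star})v\|\ge\kappa\|v\|$. Hence
\begin{equation*}
\|x^k+d^k-x^{\star}\|=o(\|d^k\|)+o(\|x^k-x^{\star}\|).
\end{equation*}

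The main obstacle is removing the $o(\|d^k\|)$ term, since it is measured against $\|d^k\|$ rather than $\|x^k-x^{\star}\|$. I would handle it using $\|d^k\|\le\|x^k+d^k-x^{\star}\|+\|x^k-x^{\star}\|$. For $k$ large the $o(\cdot)$ coefficient is below $1/2$, so the $\|x^k+d^k-x^{\star}\|$ contribution can be absorbed into the left-hand side. This gives $\|x^k+d^k-x^{\star}\|\le 2\,o(1)\,\|x^k-x^{\star}\|+o(\|x^k-x^{\star}\|)$, which is exactly the superlinearity of the directions $\{d^k\}$.

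Finally, I would address the degenerate cases. If $d^k=0$, then the Dennis--Moré condition is read as $R_{\gamma}(x^k)=0$, and the estimate is trivial. If $x^k=x^{\star}$ for some $k$, the ratio is undefined; then I would interpret the claim along indices with $x^k\ne x^{\star}$, or note that the residual already vanishes there.
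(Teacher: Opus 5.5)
Your proof is correct, but it takes a different route from the paper's.

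\textbf{The paper's route.} It evaluates the residual at the trial point. Strict differentiability applied to the pair $(x^k+d^k,x^k)$, combined with the Dennis--Mor\'{e} condition, gives $\|R_\gamma(x^k+d^k)\|=o(\|d^k\|)$. From nonsingularity and $R_\gamma(x^\star)=0$ it derives the local error bound $\|R_\gamma(x)\|\ge\alpha\|x-x^\star\|$, applies it at $x=x^k+d^k$ to get $\|x^k+d^k-x^\star\|=o(\|d^k\|)$, and converts this to $o(\|x^k-x^\star\|)$ via $\|x^k-x^\star\|\ge\|d^k\|-\|x^k+d^k-x^\star\|$.

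\textbf{Your route.} You never evaluate $R_\gamma$ at $x^k+d^k$. You linearize only along $x^k\to x^\star$ and apply $\|JR_\gamma(x^\star)v\|\ge\kappa\|v\|$ directly to $v=x^k+d^k-x^\star$. You then absorb the $o(\|d^k\|)$ term through $\|d^k\|\le\|x^k+d^k-x^\star\|+\|x^k-x^\star\|$. This yields $(\kappa-\epsilon_k)\|x^k+d^k-x^\star\|\le(\epsilon_k+\delta_k)\|x^k-x^\star\|$ with $\epsilon_k,\delta_k\to0$.

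\textbf{What your version buys.}
\begin{itemize}
\item It uses only Fr\'{e}chet differentiability of $R_\gamma$ at $x^\star$, not strict differentiability.
\item It does not need $x^k+d^k\to x^\star$, i.e.\ $d^k\to0$. The paper's argument requires this twice, for strict differentiability at the pair $(x^k+d^k,x^k)$ and for the error bound at $x^k+d^k$, but never proves it. It could be supplied from Dennis--Mor\'{e} plus nonsingularity, which give $\|d^k\|\le\frac{2}{\kappa}\|R_\gamma(x^k)\|\to0$.
\item You justify $R_\gamma(x^\star)=0$ from $r^k\to0$ and continuity, whereas the paper simply asserts it.
\end{itemize}

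\textbf{What the paper's version buys.} It produces the extra statement $\|R_\gamma(x^k+d^k)\|=o(\|d^k\|)$: the residual at the trial point is negligible relative to the step. This is the classical Dennis--Mor\'{e} characterization, in the form usually reused when analysing Broyden-type updates.
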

\begin{proof}
    Since, $R_{\gamma}$ is strictly differentiable at $x^{\star}$, then
    \begin{equation}\label{Strict Differentiability}
        \lim_{x\to x^{\star}}\frac{R_{\gamma}(x)-R_{\gamma}(x^{\star})-JR_{\gamma}(x^{\star})(x-x^{\star})}{\|x-x^{\star}\|}=0.
    \end{equation}

    Let, $r_1^k$, $r_2^k\in R_{\gamma}(x^k).$ Now, using \eqref{Strict Differentiability} we get, $\|r_1^k-r_2^k\|\le\|r_1^k-R_{\gamma}(x^{\star})-JR_{\gamma}(x^{\star})(x-x^{\star})\|+\|r_2^k-R_{\gamma}(x^{\star})-JR_{\gamma}(x^{\star})(x-x^{\star})\|\to0$ as $x^k\to x^{\star}$, i.e., $k\to+\infty.$ So, for sufficiently large $k$, $R_{\gamma}(x^k)$ is single valued.

    Let $A=JR_{\gamma}(x^{\star}).$ Since, $A$ exists and non-singular, then $A^{-1}$ also exists and non-singular. So, $\|A^{-1}\|<+\infty$ and then for any $d\in\mathbb{R}^{n}$, $\|d\|=\|A^{-1}Ad\|\le\|A^{-1}\|\|Ad\|,$ which implies $\|Ad\|\ge\frac{1}{\|A^{-1}\|}\|d\|.$ Let, $\alpha_0=\frac{1}{\|A^{-1}\|}>0$. Then, $\|Ad\|\ge\alpha_0\|d\|,$ $\forall d\in \mathbb{R}^n.$

    Now, due to strict differentiability of $R_{\gamma}$ at $x^{\star}$ and $R_{\gamma}(x^{\star})=0$, we get, for any $x\in\mathbb{R}^n$,
    \begin{align}
        \notag
        & R_{\gamma}(x)=R_{\gamma}(x^{\star})+A(x-x^{\star})+o(\|x-x^{\star}\|)\\\notag
        \Rightarrow\; & R_{\gamma}(x)=A(x-x^{\star})+o(\|x-x^{\star}\|)\\\notag
        \Rightarrow\; & \|R_{\gamma}(x)\|\ge\|A(x-x^{\star})\|-\|o(\|x-x^{\star}\|)\|\\\notag
        \Rightarrow\; &\|R_{\gamma}(x)\|\ge\alpha_0\|x-x^{\star}\|-\frac{\alpha_0}{2}\|x-x^{\star}\|\;\;\;\;\;[\text{By using the definition of Little-}o]\\
        \Rightarrow\; & \|R_{\gamma}(x)\|\ge\frac{\alpha_0}{2}\|x-x^{\star}\|=\alpha\|x-x^{\star}\|\;\;\;\;\;[\text{Let }\alpha=\frac{\alpha_0}{2}>0].\label{Positive lower bound of residual}
    \end{align}

    Now, from \eqref{Strict Differentiability}, we say that
    \begin{equation}\label{Strict differentiability 2}
        \lim_{k\to+\infty}\frac{\|R_{\gamma}(x^k+d^k)-R_{\gamma}(x^k)-JR_{\gamma}(x^{\star})d^k\|}{\|d^k\|}=0.
    \end{equation}

    Now, using the triangle inequality of the norm, we get
    \begin{align}
        \notag
        & \frac{\|R_\gamma(x^k+d^k)-R_{\gamma}(x^k)-JR_{\gamma}(x^{\star})d^k\|}{\|d^k\|}\ge\frac{\|R_{\gamma}(x^k+d^k)\|}{\|d^k\|}\\\notag
        &\qquad\qquad\qquad\qquad\qquad\qquad\qquad\qquad\qquad-\frac{\|R_{\gamma}(x^k)+JR_{\gamma}(x^{\star})d^k\|}{\|d^k\|}\\\notag
        \Rightarrow\; & \frac{\|R_\gamma(x^k+d^k)\|}{\|d^k\|}\le\frac{\|R_\gamma(x^k+d^k)-R_{\gamma}(x^k)-JR_{\gamma}(x^{\star})d^k\|}{\|d^k\|}\\
        &\qquad\qquad\qquad\qquad+\frac{\|R_{\gamma}(x^k)+JR_{\gamma}(x^{\star})d^k\|}{\|d^k\|}.\label{SCD lemma (i)}
    \end{align}

    Now, taking both side limits as $k\to+\infty$ and then using \eqref{Strict differentiability 2}, \eqref{Dennis-More condition}, and the sandwich theorem of limits in \eqref{SCD lemma (i)}, we finally get that $\lim_{k\to+\infty}\frac{\|R_{\gamma}(x^k+d^k)\|}{\|d^k\|}=0.$ Now, since $\|R_\gamma(x^k+d^k)\|\ge\alpha\|x^k+d^k-x^{\star}\|$, for some $\alpha>0$, then we conclude that $\lim_{k\to+\infty}\frac{\|x^k+d^k-x^{\star}\|}{\|d^k\|}=0.$

    Now, we have
    \allowdisplaybreaks{
    \begin{align*}
        \notag
        & \frac{\|x^k+d^k-x^{\star}\|}{\|x^k-x^{\star}\|}=\frac{\|x^k+d^k-x^{\star}\|}{\|d^k-(x^k+d^k-x^{\star})\|}\\\notag
        \Rightarrow\; & \frac{\|x^k+d^k-x^{\star}\|}{\|x^k-x^{\star}\|}\le\frac{\|x^k+d^k-x^{\star}\|}{\|d^k\|-\|x^k+d^k-x^{\star}\|}\\
        \Rightarrow\; & \frac{\|x^k+d^k-x^{\star}\|}{\|x^k-x^{\star}\|}\le\frac{\|x^k+d^k-x^{\star}\|}{\|d^k\|\left(1+\frac{\|x^k+d^k-x^{\star}\|}{\|d^k\|}\right)}\to0\;\text{as }k\to+\infty.
    \end{align*}}

    Hence, the directions ${d^k}$ are superlinear is proved.
\end{proof}

Now, we are ready to discuss the superlinear convergence rate for Algorithm \ref{FISTA-DYS-LS}.

\begin{thm}[Superlinear Convergence]\label{Superlinear convergence}
    Under Assumption \ref{Assumption 1}, consider the iterates generated by Algorithm \ref{FISTA-DYS-LS} with the direction $\{d^k\}$ as discussed in \eqref{Quasi-Newton FISTA-DYS direction} selected through the modified Broyden approach, and $\{H^k\}$ as discussed in \eqref{Jacobian Modified Broyden} is bounded, and $\{x^k\}$ converges to a strong local minimizer $x^{\star}$ of $\varphi$. Also, assume that at $x^{\star}$, $R_{\gamma}$ is Lipschitz differentiable and the Jacobian $JR_{\gamma}(x^{\star})$ is non-singular. It follows that the Dennis Mor\'{e} condition holds and therefore, after some finite iterations, $\tau^k=1$ is always acceptable and $\{x^k\}$ converges to $x^{\star}$ superlinearly.
\end{thm}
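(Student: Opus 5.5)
The plan is to follow the standard Dennis--Moré route for Broyden-type schemes, as in the analysis of quasi-Newton DRS in \cite{themelis2022douglas}, and then conclude with Lemmas \ref{Superlinear convergence for direction} and \ref{Acceptance of the unit stepsize}. Let $A=JR_{\gamma}(x^{\star})$.

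\textbf{Step 1 (local regularity).} Since $x^k\to x^{\star}$ and $R_\gamma$ is strictly (indeed Lipschitz) differentiable at $x^{\star}$, I would first argue that $R_\gamma(x^k)$ is single-valued for large $k$, exactly as in Lemma \ref{Superlinear convergence for direction}. Next, $R_\gamma(x^{\star})=0$ by stationarity, via Theorem \ref{Subsequential convergence for FISTA-DYS-LS}. From the bound \eqref{Positive lower bound of residual}, $\alpha\|x-x^{\star}\|\le\|R_\gamma(x)\|\le L_R\|x-x^{\star}\|$ near $x^{\star}$. Lipschitz differentiability then yields the secant estimate
\[
\|q^k-Ap^k\|\le \kappa\,\max\{\|x^k-x^{\star}\|,\|x^k+d^k-x^{\star}\|\}\,\|p^k\|.
\]
This uses $p^k=d^k$ and $q^k=R_\gamma(x^k+d^k)-R_\gamma(x^k)$, both evaluated at the base point $x^k$, so that the FISTA extrapolation term plays no role in the secant pair.

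\textbf{Step 2 (Dennis--Moré via the modified Broyden update).} I would work with $B^k=(H^k)^{-1}$. By Sherman--Morrison, the update \eqref{Jacobian Modified Broyden} is equivalent to a Powell-damped Broyden update
\[
B^{k+1}=B^k+\theta^k\frac{(q^k-B^kp^k)(p^k)^\top}{\|p^k\|^2}.
\]
The safeguard $\theta^k$ keeps the denominator away from zero, and boundedness of $\{H^k\}$ gives boundedness of $\{B^k\}$ as well. The standard bounded-deterioration inequality in the Frobenius norm then reads
\[
\|B^{k+1}-A\|_F^2\le\|B^k-A\|_F^2-\bar\theta(1-\theta)\frac{\|(B^k-A)p^k\|^2}{\|p^k\|^2}+c_1\sigma_k,
\]
where $\sigma_k$ is the Step 1 error. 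Assuming $\sum_k\sigma_k<\infty$, telescoping gives
\[
\frac{\|(B^k-A)p^k\|}{\|p^k\|}\to0 .
\]
Since $B^kd^k=-R_\gamma(x^k)$, this is exactly \eqref{Dennis-More condition}.

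\textbf{Step 3 (conclusion).} With \eqref{Dennis-More condition} established, Lemma \ref{Superlinear convergence for direction} shows that $\{d^k\}$ are superlinear directions. Lemma \ref{Acceptance of the unit stepsize} then shows that $\tau^k=1$ is eventually accepted, using that $y^k=\mathrm{prox}_{\gamma g}(x^k)\to y^{\star}=\mathrm{prox}_{\gamma g}(x^{\star})$, a strong local minimizer. Hence eventually $x^{k+1}=x^k+d^k$, and
\[
\frac{\|x^{k+1}-x^{\star}\|}{\|x^k-x^{\star}\|}\to0 .
\]

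\textbf{Main obstacle.} The hard point is the summability $\sum_k\|x^k-x^{\star}\|<\infty$ needed in Step 2. It must come before superlinearity is known, since the argument would otherwise be circular. I would first try Lemma \ref{Strong convexity type lemma}, which gives $\varphi^\gamma(x^k)-\varphi^{\star}\ge\frac{\mu}{2}\|x^k-x^{\star}\|^2$. Combined with the line-search decrease $\varphi^\gamma(x^{k+1})-\varphi^\gamma(x^k)<c\|z^k-y^k\|^2$ and a bound $\|z^k-y^k\|\gtrsim\|r^k\|\gtrsim\|x^k-x^{\star}\|$ (from \eqref{Positive lower bound of residual}), this would give Q-linear decrease of $\varphi^\gamma(x^k)-\varphi^{\star}$, hence R-linear convergence of $x^k$ and the required summability. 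The bound on $\|z^k-y^k\|$ must absorb the extrapolation term $\beta^k(w^k-w^{k-1})$ using its square-summability. If this absorption fails, I would fall back on the KL argument of Theorem \ref{Global convergence for FISTA-type DYS} with exponent $1/2$, which is implied by the strong local minimality, to obtain finite length of $\{x^k\}$ directly.
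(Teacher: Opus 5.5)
Your architecture is the one the paper intends. Its proof simply combines Lemmas \ref{Strong convexity type lemma}, \ref{Acceptance of the unit stepsize}, \ref{Superlinear convergence for direction} with the Broyden/Dennis--Mor\'e argument of \cite[Theorem 4.8]{themelis2022douglas}. Your Steps 1--3 spell that argument out correctly: single-valuedness, the secant pair $q^k=R_\gamma(x^k+d^k)-R_\gamma(x^k)$ being untouched by extrapolation, the Sherman--Morrison identification of \eqref{Jacobian Modified Broyden} with a Powell-damped Broyden update with $\theta^k\in[1-\theta,1+\theta]$, and bounded deterioration. But the step you flag is a genuine gap, and neither of your fixes closes it.

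In DRS the nominal step is $-\lambda$ times the fixed-point residual, so the line search certifies a decrease proportional to $\|R_\gamma(x^k)\|^2$. Combined with Lemma \ref{Strong convexity type lemma}, Proposition \ref{quadratic upper bound} (which gives $e_k:=\varphi^\gamma(x^k)-\varphi^\star\le K\|x^k-x^\star\|^2$) and \eqref{Positive lower bound of residual}, this yields Q-linear decay of $e_k$ and hence $\sum_k\|x^k-x^\star\|<\infty$. Here the test certifies only $|c|\,\|z^k-y^k\|^2$, where $z^k-y^k=-r^k+\beta^k(w^k-w^{k-1})$ and $\beta^k\to1$. Square-summability of $\{w^k-w^{k-1}\}$ is an absolute statement (as is that of $\{r^k\}$). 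It gives no relative bound $\beta^k\|w^k-w^{k-1}\|\le(1-\epsilon)\|r^k\|$. The extrapolation term is controlled only through $\|x^k-x^{k-1}\|$ via \eqref{F-SD 5}. In a fast regime that is of the order of $\|x^{k-1}-x^\star\|$, not of $\|x^k-x^\star\|\sim\|r^k\|$, and the two vectors may nearly cancel. What your route actually gives is
\[ e_{k+1}\le\Bigl(1-\tfrac{|c|\alpha^2}{2K}\Bigr)e_k+|c|\,\|w^k-w^{k-1}\|^2, \qquad \|w^k-w^{k-1}\|^2\le\tfrac{4M_1^2}{\mu}\,(e_k+e_{k-1}), \]
where $M_1$ is the Lipschitz constant of $x\mapsto w$ from \eqref{F-SD 5}. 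Gain and loss both scale with $|c|$, so the recursion contracts only if $\alpha^2\mu>16KM_1^2$. That is incompatible with $\mu\le 2K$ and $\alpha\le 2M_1$, the latter because $R_\gamma$ is $2M_1$-Lipschitz.

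The KL fallback hits the same wall. Theorem \ref{Global convergence for FISTA-type DYS} concerns Algorithm \ref{FISTA-DYS} without line search, and it comes without proof. More to the point, a KL-type inequality is not what is missing: in residual form, $\|R_\gamma(x)\|^2\ge\frac{\alpha^2}{K}\bigl(\varphi^\gamma(x)-\varphi^\star\bigr)$ near $x^\star$, you already have it. Any linear-rate or finite-length argument needs the certified decrease to dominate $\|r^k\|^2$ or $\|x^{k+1}-x^k\|^2$. The test controls neither, since the accepted step contains $\tau^kd^k$ with $\|d^k\|$ of order $\|r^k\|$. So the missing ingredient is a per-iteration decrease proportional to $\|R_\gamma(x^k)\|^2$, or summability of $\|x^k-x^\star\|$ as an extra hypothesis. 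The paper's one-line proof hides exactly this step in the phrase ``a similar approach'', so it cannot simply be imported from the DRS reference.

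A minor point: bounded $\{H^k\}$ does not give bounded $\{B^k\}$; it only gives $\|B^kv\|\ge\|v\|/\sup_j\|H^j\|$. You do not need it. Once $\sum_k\sigma_k<\infty$, the unsquared estimate $\|B^{k+1}-A\|_F\le\|B^k-A\|_F+c_1\sigma_k$ already yields boundedness. Boundedness of $\{H^k\}$ is instead what reduces $\sum_k\sigma_k<\infty$ to $\sum_k\|x^k-x^\star\|<\infty$, through $\|d^k\|\le\|H^k\|\,\|R_\gamma(x^k)\|$.
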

\begin{proof}
    Using Lemma \ref{Strong convexity type lemma},
\ref{Acceptance of the unit stepsize},
and \ref{Superlinear convergence for direction}, and a similar approach, which is used in the proof in \cite[Theorem 4.8]{themelis2022douglas}, we conclude our result.
\end{proof}

\section{FISTA-type PnP-DYS for Image Restoration}\label{FISTA-type PnP-DYS for Image Restoration}
In this section, we propose a FISTA-type PnP-DYS algorithm and analyze its convergence. The PnP method is a flexible framework for solving inverse problems arising from large-scale measurements, where we use denoisers to incorporate prior information. This method is inspired by classical proximal splitting methods for non-smooth composite optimization, including FBS, DRS, ADMM, and extrapolated-DYS. Due to the success of deep learning, PnP methods are commonly used to include learned priors through pretrained neural networks \cite{gavaskar2021plug}. These methods perform very well in many applications. So, we replace the proximal operator of $f$ in Algorithm \ref{FISTA-DYS} with a learned denoiser $\mathcal{D}_\sigma$ and obtain a FISTA-type PnP-DYS method. In this paper, we adopt the GS-denoiser from \cite{cohen2021has,hurault2022gradient,wu2024extrapolated} as follows:
\begin{equation}\label{Denoiser}
    \mathcal{D}_\sigma=I-\nabla g_{\sigma},
\end{equation}
where $g_{\sigma}(x)=\frac{1}{2}\|x-N_{\sigma}(x)\|^2$ and $N_{\sigma}(x)$ is implemented as a differentiable neural network, which allows explicit computation of $g_{\sigma}(x)$ and ensures that $g_{\sigma}$ has a Lipschitz continuous gradient with constant $L<1$. The denoiser $D_{\sigma}$ in (\ref{Denoiser}) is originally trained to remove Gaussian noise with level $\sigma$. As shown in [27], even when constrained to be an exact conservative field, it can achieve state-of-the-art denoising performance. In the next proposition, we see that the denoiser $\mathcal{D}_{\sigma}$ in (\ref{Denoiser}) is a form of proximal mapping of a weakly convex function.

\begin{prop}[\cite{hurault2022proximal}]\label{Relation between denoiser and proximal}
    $\mathcal{D}_{\sigma}(x)=\mathrm{prox}_{\phi_{\sigma}}(x),$ where $\phi_{\sigma}$ is expressed as
    \begin{equation}\label{weak_convex_function}
        \phi_{\sigma}(x)=
        \left\{
        \begin{array}{ll}
            g_{\sigma}(\mathcal{D}_{\sigma}^{-1}(x))-\frac{1}{2}\|\mathcal{D}_{\sigma}^{-1}(x)-x\|^2, & \text{if }x\in\mathrm{Im}(\mathcal{D}_{\sigma}),\\
            +\infty, & \text{otherwise}.
        \end{array}
        \right.
    \end{equation}

    Moreover, $\phi_{\sigma}$ is $\frac{L}{L+1}$-weakly convex and $\frac{L}{1-L}$-smooth on $\mathrm{Im}(\mathcal{D}_{\sigma}),$ and $\phi_{\sigma}(x)\ge g_{\sigma}(x),$ $\forall x\in\mathbb{R}^n.$
\end{prop}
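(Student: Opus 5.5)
The statement is the proposition of Hurault et al.: the gradient-step denoiser $\mathcal{D}_\sigma=I-\nabla g_\sigma$ equals $\mathrm{prox}_{\phi_\sigma}$, with $\phi_\sigma$ given by (\ref{weak_convex_function}). The plan is to exploit that $\mathcal{D}_\sigma$ is the gradient of the convex potential $h_\sigma(x)=\frac12\|x\|^2-g_\sigma(x)$. This holds because $\nabla^2 h_\sigma = I-\nabla^2 g_\sigma \succeq (1-L)I\succ 0$, using that $\nabla g_\sigma$ is $L$-Lipschitz with $L<1$.

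The first step is invertibility of $\mathcal{D}_\sigma$. Since $h_\sigma$ is $(1-L)$-strongly convex and $C^1$, its gradient $\mathcal{D}_\sigma$ is injective. On its image it has the inverse $\nabla h_\sigma^*$, and this inverse is $\frac{1}{1-L}$-Lipschitz. Equivalently, $\|\mathcal{D}_\sigma x-\mathcal{D}_\sigma y\|\ge (1-L)\|x-y\|$.

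The second step identifies $\phi_\sigma$ with a Legendre-type transform. Set $\psi=h_\sigma^*-\frac12\|\cdot\|^2$ on $\mathrm{Im}(\mathcal{D}_\sigma)$ and $+\infty$ elsewhere. For $x=\mathcal{D}_\sigma(u)$, Fenchel equality gives $h_\sigma^*(x)=\langle x,u\rangle-h_\sigma(u)$. Expanding with $h_\sigma(u)=\frac12\|u\|^2-g_\sigma(u)$ yields
$$h_\sigma^*(x)-\tfrac12\|x\|^2=g_\sigma(u)-\tfrac12\|u-x\|^2,$$
which is exactly $\phi_\sigma(x)$ with $u=\mathcal{D}_\sigma^{-1}(x)$.

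The third step is the prox identity. For any $y$, $\mathrm{prox}_{\phi_\sigma}(y)=\argmin_x \{h_\sigma^*(x)-\langle x,y\rangle\}$, after dropping the term $\frac12\|y\|^2$. This objective is convex, because $h_\sigma^*$ is convex: after the $+\infty$ extension off the image it remains the conjugate of the convex function $h_\sigma$. Its minimizer is $\partial h_\sigma(y)=\{\nabla h_\sigma(y)\}=\{\mathcal{D}_\sigma(y)\}$, and uniqueness follows from the injectivity shown above. This step is the main obstacle: one must check that $\mathrm{dom}\,h_\sigma^*$ is handled correctly when $\mathrm{Im}(\mathcal{D}_\sigma)$ is not all of $\mathbb{R}^n$. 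I would first argue directly that $\mathcal{D}_\sigma(y)$ attains the infimum over $\mathrm{Im}(\mathcal{D}_\sigma)$, via the convexity inequality $h_\sigma(v)\ge h_\sigma(y)+\langle \mathcal{D}_\sigma y, v-y\rangle$ with $v=\mathcal{D}_\sigma^{-1}(x)$.

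The remaining claims come from the conjugate structure. Since $h_\sigma$ is $(1+L)$-smooth, $h_\sigma^*$ is $\frac{1}{1+L}$-strongly convex. Hence $\phi_\sigma+\frac{L}{2(L+1)}\|\cdot\|^2 = h_\sigma^*-\frac{1}{2(1+L)}\|\cdot\|^2$ is convex, so $\phi_\sigma$ is $\frac{L}{L+1}$-weakly convex. Since $h_\sigma$ is $(1-L)$-strongly convex, $\nabla h_\sigma^*$ is $\frac1{1-L}$-Lipschitz. Therefore $\nabla\phi_\sigma=\nabla h_\sigma^*-I$ satisfies $\nabla\phi_\sigma(x)=\mathcal{D}_\sigma^{-1}(x)-x=\nabla g_\sigma(\mathcal{D}_\sigma^{-1}x)$, a composition with Lipschitz constant $L\cdot\frac1{1-L}$, which gives $\frac{L}{1-L}$-smoothness. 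Finally, the bound $\phi_\sigma\ge g_\sigma$ on $\mathrm{Im}(\mathcal{D}_\sigma)$ follows from the prox characterization by comparing values at $u=\mathcal{D}_\sigma^{-1}(x)$ and $x$. Off the image it is trivial, since $\phi_\sigma=+\infty$ there.
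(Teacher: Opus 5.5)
The paper gives no proof of its own here; the proposition is quoted from Hurault et al. There it follows from the Gribonval--Nikolova characterization of proximity operators applied to $\mathcal{D}_\sigma=\nabla h_\sigma$: a gradient of a convex potential is a prox, and the Lipschitz constant $1+L$ of $\mathcal{D}_\sigma$ becomes the weak-convexity modulus $1-\frac{1}{1+L}=\frac{L}{L+1}$. After that, $\nabla\phi_\sigma=\mathcal{D}_\sigma^{-1}-I=\nabla g_\sigma\circ\mathcal{D}_\sigma^{-1}$ is read off directly. Your conjugate-based route unpacks that cited theorem. The prox identity reduces to $\argmin_x\{h_\sigma^*(x)-\langle x,y\rangle\}=\partial h_\sigma(y)$. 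Both the weak convexity and the $\frac{L}{1-L}$ bound then come from the smoothness/strong-convexity duality between $h_\sigma$ and $h_\sigma^*$. This is a legitimate and more self-contained proof. Your computations are correct: the Fenchel identity giving the formula for $\phi_\sigma$, and the constants $\frac{1}{2(1+L)}$ and $L\cdot\frac{1}{1-L}$.

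What is missing is how to resolve the step you flag yourself: $\mathcal{D}_\sigma$ is onto $\mathbb{R}^n$. Because $h_\sigma$ is finite and $(1-L)$-strongly convex, $h_\sigma^*$ is finite and differentiable on all of $\mathbb{R}^n$, and $\nabla h_\sigma(\nabla h_\sigma^*(x))=x$ for every $x$. Equivalently, $u\mapsto x+\nabla g_\sigma(u)$ is an $L$-contraction, so $u-\nabla g_\sigma(u)=x$ always has a solution. Hence $\mathrm{Im}(\mathcal{D}_\sigma)=\mathbb{R}^n$, the ``otherwise'' branch in the definition of $\phi_\sigma$ is vacuous, and $\phi_\sigma=h_\sigma^*-\frac12\|\cdot\|^2$ everywhere.

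Without this fact your weak-convexity step is not justified. The identity $\phi_\sigma+\frac{L}{2(L+1)}\|\cdot\|^2=h_\sigma^*-\frac{1}{2(1+L)}\|\cdot\|^2$ would hold only on the image. In the situation you are guarding against, your patch would also fail. The $+\infty$-extension of $h_\sigma^*$ off $\mathrm{Im}\,\nabla h_\sigma=\mathrm{dom}\,\partial h_\sigma^*$ need not coincide with $h_\sigma^*$, and it need not be convex.

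There are three smaller slips.
\begin{itemize}
\item In your fallback argument for the prox identity, you need the gradient inequality based at $v$, namely $h_\sigma(y)\ge h_\sigma(v)+\langle\mathcal{D}_\sigma v,\,y-v\rangle$. Equivalently, use Fenchel--Young, $h_\sigma^*(x)-\langle x,y\rangle\ge-h_\sigma(y)$, with equality exactly at $x=\mathcal{D}_\sigma y$. The instance you wrote, based at $y$, does not give this bound.
\item For $\phi_\sigma\ge g_\sigma$, compare the prox objective with input $x$ at its minimizer $\mathcal{D}_\sigma(x)$ and at $x$ itself: $\phi_\sigma(x)\ge\phi_\sigma(\mathcal{D}_\sigma x)+\frac12\|\nabla g_\sigma(x)\|^2=g_\sigma(x)$. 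Using $u=\mathcal{D}_\sigma^{-1}(x)$ as the input only yields the upper bound $\phi_\sigma(x)\le\phi_\sigma(u)-\frac12\|x-u\|^2$.
\item The Hessian argument assumes $g_\sigma\in C^2$. The strong monotonicity $\langle\mathcal{D}_\sigma x-\mathcal{D}_\sigma y,\,x-y\rangle\ge(1-L)\|x-y\|^2$ gives the same conclusions from the Lipschitz gradient alone.
\end{itemize}
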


Drawing on Proposition \ref{Relation between denoiser and proximal}, we aim to develop an FISTA-type PnP-DYS algorithm using the denoiser $\mathcal{D}_\sigma$ in (\ref{Denoiser}), which corresponds to the proximal operator of a non-convex function $\phi_\sigma$ defined in (\ref{weak_convex_function}). To this end, we consider the following optimization problem:
\begin{equation}\label{Prob 2}
\min_{x} \; \varphi_{\gamma,\sigma}(x) 
= f(x) + \frac{1}{\gamma}\,\phi_\sigma(x) + h(x),
\end{equation}
where $f$ represents the data-fidelity term and may be non-convex, $h$ is a $L_h$-smooth function, and $\gamma$ represents regularization parameter. Moreover, $\phi_\sigma$ is defined as in Proposition \ref{Relation between denoiser and proximal} through a function $g_\sigma$ satisfying $\mathcal{D}_\sigma = I - \nabla g_\sigma$.

\begin{assum}\label{Assumption 2}
    \leavevmode
    \begin{itemize}
        \item[(i)] $f\in C^{1,~1}(\mathbb{R}^n)$ is $L_f$-smooth.
        \item[(ii)] $h\in C^{1,~1}(\mathbb{R}^n)$ is $L_h$-smooth.
    \end{itemize}
\end{assum}

Based on Assumption \ref{Assumption 2} of the problem (\ref{Prob 2}), we develop Algorithm \ref{FISTA-PnP-DYS}.

\begin{algorithm}[h]
\caption{FISTA-type PnP-DYS}
\begin{algorithmic}[1]

\State \textbf{Input:} $x^0 = w^{-1} \in \mathbb{R}^n$, step size $\gamma > 0$, relaxation parameter $\lambda>0$.
\State $t^0 = 1$

\For{$k = 0,1,2,\dots$}
    \State $y^k = \mathrm{prox}_{\gamma f}(x^k)$
    \State $w^k = \mathcal{D}_{\sigma}\big(2y^k - x^k - \gamma \nabla h(y^k)\big)$
    \State $t^{k+1} = \frac{1 + \sqrt{1 + 4(t^k)^2}}{2}$
    \State $\beta^k = \frac{t^k - 1}{t^{k+1}}$
    \State $z^k = w^k + \beta^k (w^k - w^{k-1})$
    \State $x^{k+1} = x^k + \lambda(z^k - y^k)$
\EndFor

\end{algorithmic}\label{FISTA-PnP-DYS}
\end{algorithm}

Now, we analyze the convergence results for Algorithm \ref{FISTA-PnP-DYS}. Since $f$ and $h$ are $L_f,~L_h$-smooth respectively, it becomes a special form of (\ref{Prob 1}) with $f=\frac{1}{\gamma}\phi_{\sigma}$ and $g=f$. So, the following theorems (Theorems \ref{Sufficient decrease for FISTA-type PnP DYS}-\ref{Global convergence for FISTA-type PnP DYS}) follow from Theorems \ref{FISTA_SD_TH}-\ref{Global convergence for FISTA-type DYS}.

\begin{thm}[Sufficient decrease for FISTA-type PnP-DYS]\label{Sufficient decrease for FISTA-type PnP DYS}
 Consider that Assumption \ref{Assumption 2} is satisfied, $\gamma<\frac{1}{\beta_f}$ and consider $(y^k,~z^k,~x^{k+1})\in \text{FISTA-PnP-DYS}(x^k)$. Then for $C<0$, 
    $$\varphi^{\gamma}(x^{k+1})\le\varphi^{\gamma}(x^k)+\frac{C}{(1+\gamma L_g)^2}\|x^{k+1}-x^k\|^2,$$
 if the following are held:
    \begin{itemize}
        \item[(i)] $0<\gamma<\frac{1}{L_1},~\frac{4L_2}{L_1+L_2}<\lambda<2\left(1+\frac{\sigma_1}{L_1}\right),~L_1>\frac{L_2-\sigma_1+\sqrt{(L_2-\sigma_1)^2-4L_2\sigma_1}}{2}.$
        \item[(ii)] $0<\gamma<\frac{1}{L_1},~2\gamma L_1\left(1+\frac{\sigma_1}{L_1}\right)<\lambda\le\frac{\sigma_1+2L_2}{\frac{\sigma_1L_1}{2(L_1+\sigma_1)}+\frac{\sigma_2L_2}{2(L_2+\sigma_2)}},~\sigma_1\le-2L_2,~L_1>\max\left\{|\sigma_1|,\frac{\sigma_1\sigma_2}{2L_2+\sigma_2}\right\}.$
        \item[(iii)] $0<\gamma<\frac{1}{L_1+\sigma_1},~2\left(1+\frac{\sigma_1}{L_1}\right)\le\lambda\le\frac{2}{1-\frac{\sigma_1L_1}{(L_1+\sigma_1)^2}-\frac{\sigma_2L_2}{(L_1+\sigma_1)(L_2+\sigma_2)}+\frac{2L_1L_2}{(L_1+\sigma_1)^2}},~0>\sigma_1\ge\frac{\sigma_2L_2}{L_2+\sigma_2},~0<L_1\le\frac{1}{L_2(2L_2+\sigma_2)}(L_2\sigma_1\sigma_2-L_2\sigma_1^2-\sigma_1^2\sigma_2).$
    \end{itemize}

    Here, $\sigma_1=\min\{\sigma_g,~0\},~L_1\ge L_g$ s.t. $L_1+\sigma_1>0$ and $\sigma_2=\min\{\sigma_h,~0\},~L_2\ge L_h$ s.t. $L_2+\sigma_2>0.$
\end{thm}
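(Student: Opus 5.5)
The plan is to reduce the statement to Theorem \ref{FISTA_SD_TH} by identifying problem \eqref{Prob 2} as an instance of problem \eqref{Prob 1}. We relabel the three pieces: the nonsmooth term handled by the second proximal step is $\tilde f:=\frac{1}{\gamma}\phi_\sigma$, the smooth term handled by the first proximal step is $\tilde g:=f$, and $\tilde h:=h$. Algorithm \ref{FISTA-PnP-DYS} is then exactly Algorithm \ref{FISTA-DYS} applied to $\tilde f+\tilde g+\tilde h$, with $\mathrm{prox}_{\gamma\tilde f}=\mathrm{prox}_{\phi_\sigma}=\mathcal{D}_\sigma$ by Proposition \ref{Relation between denoiser and proximal}. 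The envelope $\varphi^\gamma$ in the statement is understood as the Davis--Yin envelope \eqref{DYE_1} built from $(\tilde f,\tilde g,\tilde h)$. The constants are read accordingly: $L_g,\sigma_g$ become the smoothness and hypoconvexity moduli of $f$ (so $\sigma_1=\min\{\sigma_f,0\}$ and $L_1\ge L_f$), and $\beta_f$ becomes the weak-convexity modulus of $\tilde f$.

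The first task is to verify Assumption \ref{Assumption 1} for $(\tilde f,\tilde g,\tilde h)$.
\begin{itemize}
\item Items (ii) and (iii) follow directly from Assumption \ref{Assumption 2}, since $f,h\in C^{1,1}$ and the accompanying Remark supplies hypoconvexity moduli in $[-L_f,L_f]$ and $[-L_h,L_h]$.
\item For item (i), $\phi_\sigma$ equals $+\infty$ off $\mathrm{Im}(\mathcal{D}_\sigma)$ and is finite on it, so it is proper. By Proposition \ref{Relation between denoiser and proximal} it is $\frac{L}{L+1}$-weakly convex, hence $\tilde f$ is $\beta_{\tilde f}=\frac{L}{\gamma(L+1)}$-weakly convex.
\item Closedness of $\tilde f$ should follow from $\mathcal{D}_\sigma=I-\nabla g_\sigma$ being a bi-Lipschitz homeomorphism when $L<1$. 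Its image is then all of $\mathbb{R}^n$ (or at least closed), and $\phi_\sigma$ is continuous there.
\item Item (iv), $\argmin\varphi_{\gamma,\sigma}\neq\emptyset$, is not implied by Assumption \ref{Assumption 2}. I would add it as a standing hypothesis, as the paper implicitly does by saying the results ``follow'' from Theorems \ref{FISTA_SD_TH}--\ref{Global convergence for FISTA-type DYS}. Its real use in the proof of Theorem \ref{FISTA_SD_TH} is only that $\inf\varphi^\gamma=\inf\varphi>-\infty$, via Proposition \ref{Minimization and level-boundedness equivalence}(i). Lower boundedness would also follow from $\phi_\sigma\ge g_\sigma\ge0$ together with lower boundedness of $f+h$.
\end{itemize}

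With the assumptions in place, I would rerun the argument of Theorem \ref{FISTA_SD_TH} verbatim under the new labels, noting where each ingredient enters:
\begin{itemize}
\item The identity \eqref{sufficient decrease 3} and Lemma \ref{DYS-SD} use only smoothness and hypoconvexity of $\tilde g$ and $\tilde h$. Cases (i)--(iii) therefore carry over unchanged with $\sigma_1,L_1$ now attached to $f$.
\item The only place where $\tilde f$ enters is the Lipschitz bound \eqref{F-SD 5} on $\|w^k-w^{k-1}\|$. It needs $\mathrm{prox}_{\gamma\tilde f}$ to be $\frac{1}{1-\gamma\beta_{\tilde f}}$-Lipschitz for $\gamma<1/\beta_{\tilde f}$. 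Here I would instead use directly that $\mathcal{D}_\sigma=I-\nabla g_\sigma$ is $(1+L)$-Lipschitz. This is cleaner and avoids the dependence on $\gamma$ through the scaling $\frac1\gamma\phi_\sigma$.
\item The summation argument with $\eta$, using Lemma \ref{t^k}, then gives square-summability of $\|y^{k+1}-y^k\|$. The final passage from $\|y^{k+1}-y^k\|$ to $\|x^{k+1}-x^k\|$ uses strong monotonicity of $\mathrm{prox}_{\gamma f}$ from Proposition \ref{Proximal properties of smooth functions}, with $L_g$ replaced by $L_f$.
\end{itemize}

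The main obstacle is bookkeeping the weak-convexity constant and the domain issue. The statement writes $\gamma<1/\beta_f$ and $1+\gamma L_g$ with the original labels. Under the relabeling, $\beta_f$ must mean $\beta_{\tilde f}=\frac{L}{\gamma(L+1)}$, and $\gamma<1/\beta_{\tilde f}$ becomes $L<L+1$, which always holds. So this condition is automatic, and I would state it as such. A second delicate point is ensuring $\mathrm{Im}(\mathcal{D}_\sigma)$ is closed so that $\tilde f$ is closed. I would settle this by the global inverse function argument ($\mathcal{D}_\sigma$ is strongly monotone with modulus $1-L>0$, hence surjective), after which everything reduces to Theorem \ref{FISTA_SD_TH}.
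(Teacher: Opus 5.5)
Your proposal is correct and follows the paper's own argument. The paper's proof consists only of noting that \eqref{Prob 2} is the instance of \eqref{Prob 1} with $f\mapsto\frac{1}{\gamma}\phi_\sigma$ and $g\mapsto f$, so that the second proximal step is $\mathrm{prox}_{\phi_\sigma}=\mathcal{D}_\sigma$, and then invoking Theorem \ref{FISTA_SD_TH}. Your extra checks are sound details the paper leaves implicit: closedness of $\phi_\sigma$ via surjectivity of the strongly monotone map $\mathcal{D}_\sigma$; the identity $1-\gamma\beta_{\tilde f}=\frac{1}{1+L}$, which makes the step-size condition automatic and reproduces the $(1+L)$-Lipschitz constant of $\mathcal{D}_\sigma$; and the observation that $\argmin\varphi\neq\emptyset$, or at least $\inf\varphi>-\infty$, is not implied by Assumption \ref{Assumption 2}. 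Your argument therefore stands or falls with Theorem \ref{FISTA_SD_TH}, exactly as the paper's does.
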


\begin{thm}[Subsequential convergence for FISTA-type PnP-DYS]\label{Subsequential convergence for FISTA-type PnP DYS}
    Let us consider that Assumption \ref{Assumption 2} is satisfied, and consider a sequence $\{(x^k, y^k, w^k)\}$ generated by Algorithm \ref{FISTA-PnP-DYS} with stepsize $\gamma$ and relaxation $\lambda$, as in sufficient decrease section, starting from $x^0 \in \mathbb{R}^n$. The following holds:
    \begin{enumerate}
        \item[(i)] The residual $\{w^k - y^k\}$ vanishes with rate $\min_{i \leq k} \|w^i - y^i\| = o\left(\frac{1}{\sqrt{k}}\right).$
        \item[(ii)] Sequences $\{y^k\}$ and $\{w^k\}$ have the same cluster points, all of which are stationary for $\varphi$ and on which $\varphi$ has a finite value, this being the limit of $\{\varphi^\gamma(x^k)\}$. In fact, for each $k$ one has $\mathrm{dist}(0,~\hat{\partial} \varphi(w^k)) \leq \frac{2 + \gamma (L_g + L_h)}{\gamma} \|y^k - w^k\|.$ Moreover, if $\varphi$ satisfies the PL-inequality with respect to $\mu>0$ then $\varphi(w^k)-\varphi^*\in o\left(\frac{1}{k}\right),$ where $\varphi^*=\inf\varphi.$
        \item[(iii)] If $\varphi$ has bounded level sets, then the sequence $\{(x^k, y^k, w^k)\}$ is bounded.
    \end{enumerate}
\end{thm}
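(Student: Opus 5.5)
The plan is to reduce Theorem \ref{Subsequential convergence for FISTA-type PnP DYS} to Theorem \ref{Subsequential convergence} via the identification already stated before Theorem \ref{Sufficient decrease for FISTA-type PnP DYS}. In problem \eqref{Prob 2} we relabel the three functions of \eqref{Prob 1} as
\[
f_{\mathrm{new}}=\tfrac{1}{\gamma}\phi_\sigma,\qquad g_{\mathrm{new}}=f,\qquad h_{\mathrm{new}}=h .
\]
Then $\mathrm{prox}_{\gamma f_{\mathrm{new}}}=\mathrm{prox}_{\phi_\sigma}=\mathcal{D}_\sigma$ by Proposition \ref{Relation between denoiser and proximal}. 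Consequently Algorithm \ref{FISTA-PnP-DYS} is literally Algorithm \ref{FISTA-DYS} applied to $f_{\mathrm{new}}+g_{\mathrm{new}}+h_{\mathrm{new}}$, and the envelope $\varphi^\gamma$ is the Davis–Yin envelope \eqref{DYE} built from these data.

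The first step is to check Assumption \ref{Assumption 1} for the relabelled triple. Items (ii)–(iii) follow directly from Assumption \ref{Assumption 2}: $f$ is $L_f$-smooth and $h$ is $L_h$-smooth, hence hypoconvex with some $\sigma$ in the admissible range. For item (i), $f_{\mathrm{new}}$ is proper. It is closed because $\phi_\sigma$ is lower semicontinuous: it is a weakly convex prox-generating function, and $\mathrm{Im}(\mathcal{D}_\sigma)$ is handled through the $+\infty$ extension. It is $\beta$-weakly convex with $\beta_{f_{\mathrm{new}}}=\frac{L}{\gamma(L+1)}$. For item (iv), nonemptiness of $\argmin\varphi_{\gamma,\sigma}$ is not implied by Assumption \ref{Assumption 2}. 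I would add it as an implicit standing hypothesis, or argue it from $\phi_\sigma\ge g_\sigma\ge0$ plus coercivity when (iii) is invoked. I expect this to be the delicate point, together with the step-size requirement $\gamma<1/\beta_{f_{\mathrm{new}}}$. That requirement reads $\gamma(L+1)>\gamma L$, which always holds, so the nonexpansiveness-type estimate \eqref{F-SD 5} for $\mathcal{D}_\sigma$ survives. It can also be seen directly, since $\mathcal{D}_\sigma=I-\nabla g_\sigma$ is $(1+L)$-Lipschitz, and that constant can replace $\frac{1}{1-\gamma\beta_f}$ in \eqref{F-SD 5}.

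With Assumption \ref{Assumption 1} in force, Theorem \ref{Sufficient decrease for FISTA-type PnP DYS} (i.e.\ Theorem \ref{FISTA_SD_TH}) gives $\sum_k\|y^{k+1}-y^k\|^2<\infty$ and monotone decrease of $\varphi^\gamma(x^k)$. The three claims are then obtained by following the proof of Theorem \ref{Subsequential convergence} verbatim.

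For (i), bound $\|w^k-y^k\|$ by $\frac{1+\gamma L_f}{\lambda}\|y^{k+1}-y^k\|+\|w^k-w^{k-1}\|$ using Lemma \ref{t^k}, and control $\|w^k-w^{k-1}\|$ by the Lipschitz bound on $\mathcal{D}_\sigma$. This yields square-summability and the $o(1/\sqrt{k})$ rate of the minimal residual.

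For (ii), take a common cluster point $\tilde y$ of $\{y^k\}$ and $\{w^k\}$ along a subsequence. Continuity of $\mathcal{D}_\sigma$, which is single valued, replaces outer semicontinuity of the prox. The resulting fixed-point relation gives $0\in\hat\partial\varphi_{\gamma,\sigma}(\tilde y)$. The sandwich argument with Propositions \ref{Sandwiching property} and \ref{Strict continuity} then identifies $\varphi(\tilde y)$ as the limit of $\varphi^\gamma(x^k)$. The inclusion $\frac1\gamma(y^k-w^k)+\nabla f(w^k)-\nabla f(y^k)+\nabla h(w^k)-\nabla h(y^k)\in\hat\partial\varphi(w^k)$ gives the distance bound, and the PL inequality converts its square into the $o(1/k)$ rate.

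For (iii), Proposition \ref{Minimization and level-boundedness equivalence}(iii) transfers level-boundedness to $\varphi^\gamma$. Monotonicity keeps $x^k$ in a level set, and Lipschitz continuity of $\mathrm{prox}_{\gamma f}$ together with (i) bounds $y^k$ and $w^k$.
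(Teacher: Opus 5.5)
Your reduction is the same as the paper's. The paper proves this theorem only by relabelling $\tfrac1\gamma\phi_\sigma,\ f,\ h$ as the $f,g,h$ of \eqref{Prob 1} and appealing to Theorem \ref{Subsequential convergence}. Your side checks are right:
\begin{itemize}
\item $\gamma\beta_{f_{\mathrm{new}}}=\frac{L}{L+1}<1$ holds automatically, so the factor $\frac{1}{1-\gamma\beta_f}$ in \eqref{F-SD 5} becomes the Lipschitz constant $1+L$ of $\mathcal{D}_\sigma$.
\item $\inf\varphi>-\infty$ has to be added, since Assumption \ref{Assumption 2} does not supply it.
\item The same goes for $\gamma<1/(L_f+L_h)$. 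Propositions \ref{Strict continuity} and \ref{Sandwiching property} need it, and the sufficient-decrease conditions do not imply it.
\end{itemize}

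However, one step in (ii) fails as written, and you inherit it from the paper. You let the PL inequality turn the square of $\mathrm{dist}(0,\hat\partial\varphi(w^k))\le c\,\|y^k-w^k\|$ into $\varphi(w^k)-\varphi^*=o(1/k)$. That requires the last-iterate rate $\|y^k-w^k\|^2=o(1/k)$. Part (i) only gives $\sum_k\|w^k-y^k\|^2<\infty$, i.e.\ $\min_{i\le k}\|w^i-y^i\|^2=o(1/k)$. A nonnegative summable sequence need not be $o(1/k)$: take $1/k$ at perfect squares and $0$ elsewhere. So your argument proves only $\min_{i\le k}\big(\varphi(w^i)-\varphi^*\big)=o(1/k)$. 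The paper's proof of Theorem \ref{Subsequential convergence} makes the same leap when it writes $\|y^k-w^k\|=o(1/\sqrt{k})$.

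The repair is to work with the monotone quantity $a_k:=\varphi^\gamma(x^k)-\varphi^*$ instead of $\varphi(w^k)-\varphi^*$.
\begin{itemize}
\item Nonnegativity of $a_k$ is Proposition \ref{Minimization and level-boundedness equivalence}(i).
\item Proposition \ref{quadratic upper bound} with $x=x^k$, $\tilde y=w^k$, combined with PL and your distance bound, gives $a_k\le\varphi(w^k)-\varphi^*+\frac{1+\gamma(L_f+L_h)}{2\gamma}\|y^k-w^k\|^2\le C\|y^k-w^k\|^2$. Hence $\sum_k a_k<\infty$.
\item $a_k$ is nonincreasing by Theorem \ref{Sufficient decrease for FISTA-type PnP DYS}. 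Therefore $\frac{k}{2}a_k\le\sum_{i=\lceil k/2\rceil}^{k}a_i\to 0$; this is the constant-weight case of Lemma \ref{min order lemma}.
\item Proposition \ref{Sandwiching property}(ii) then gives $\varphi(w^k)-\varphi^*\le a_k=o(1/k)$ for every $k$, which is the last-iterate claim.
\end{itemize}
The rest of your plan goes through as you describe: part (i), the stationarity and value identification in (ii), and part (iii).
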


\begin{thm}[Global convergence for FISTA-type PnP-DYS]\label{Global convergence for FISTA-type PnP DYS}
    Let us consider that Assumption \ref{Assumption 2} is satisfied, $\varphi$ is level bounded, and $f,~g,$ and $h$ are semialgebraic functions. Also if we consider a sequence $\{(x^k,~y^k,~w^k)\}$ generated by Algorithm \ref{FISTA-PnP-DYS} with all convergence conditions from Theorem \ref{Sufficient decrease for FISTA-type PnP DYS} then the sequences $\{y^k\}$ and $\{w^k\}$ are convergent and they converge to stationary point of $\varphi.$
\end{thm}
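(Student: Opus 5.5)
The plan is to reduce the statement to Theorem \ref{Global convergence for FISTA-type DYS} by the identification already indicated before Theorem \ref{Sufficient decrease for FISTA-type PnP DYS}. In problem \eqref{Prob 2} the roles are: the nonsmooth term is $\frac{1}{\gamma}\phi_\sigma$, the smooth term handled by the first proximal step is $f$, and the forward term is $h$. By Proposition \ref{Relation between denoiser and proximal}, $\mathcal{D}_\sigma=\mathrm{prox}_{\phi_\sigma}=\mathrm{prox}_{\gamma(\frac{1}{\gamma}\phi_\sigma)}$. Hence Algorithm \ref{FISTA-PnP-DYS} is exactly Algorithm \ref{FISTA-DYS} applied to $\varphi_{\gamma,\sigma}$ with this relabelling.

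First I would check Assumption \ref{Assumption 1} for the relabelled triple.
\begin{enumerate}
\item[(i)] $\frac{1}{\gamma}\phi_\sigma$ is proper and closed, and it is $\frac{L}{\gamma(L+1)}$-weakly convex by Proposition \ref{Relation between denoiser and proximal}. Closedness comes from its construction via $\mathcal{D}_\sigma^{-1}$, which is continuous because $\mathcal{D}_\sigma=I-\nabla g_\sigma$ is injective with $L<1$.
\item[(ii)--(iii)] These are exactly Assumption \ref{Assumption 2}.
\item[(iv)] Nonemptiness of $\argmin\varphi_{\gamma,\sigma}$ follows from level boundedness and lower semicontinuity.
\end{enumerate}
The condition $\gamma<1/\beta_f$ becomes $1<\frac{L+1}{L}$, which holds automatically. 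The step-size conditions of Theorem \ref{Sufficient decrease for FISTA-type PnP DYS} are assumed, so the sufficient decrease of $\varphi^\gamma$ holds. Summability of $\|y^{k+1}-y^k\|^2$ and the subgradient bound then follow from Theorems \ref{Sufficient decrease for FISTA-type PnP DYS}--\ref{Subsequential convergence for FISTA-type PnP DYS}.

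Next I would run the standard KL argument that underlies Theorem \ref{Global convergence for FISTA-type DYS}, as in \cite[Theorem 3.10]{wu2024extrapolated}. Level boundedness, together with Theorem \ref{Subsequential convergence for FISTA-type PnP DYS}(iii), gives a bounded sequence. Its set of cluster points $\Omega$ is then compact and nonempty, and $\varphi$ is constant on $\Omega$, equal to $\lim\varphi^\gamma(x^k)$.

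The core step is the uniformized KL inequality. I would apply it to a Lyapunov function in the variable $(y,w,w^-)$, or to $\varphi^\gamma$ composed with $I+\gamma\nabla f$. This yields finite length $\sum\|y^{k+1}-y^k\|<\infty$ in the usual way: combine the sufficient decrease $\varphi^\gamma(x^k)-\varphi^\gamma(x^{k+1})\ge c\|y^{k+1}-y^k\|^2$ with the relative-error bound $\mathrm{dist}(0,\hat\partial\varphi(w^k))\lesssim\|y^k-w^k\|$. Then bound $\|y^k-w^k\|$ by $\|y^{k+1}-y^k\|+\|y^k-y^{k-1}\|$ using \eqref{subpart} and \eqref{F-SD 5}. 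Finite length makes $\{y^k\}$ Cauchy, so it converges. Then $w^k-y^k\to0$ gives convergence of $\{w^k\}$ to the same limit, which is stationary by Theorem \ref{Subsequential convergence for FISTA-type PnP DYS}(ii).

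The main obstacle I expect is the KL hypothesis. The statement assumes semialgebraicity of $f,g,h$, but the relevant nonsmooth term is $\frac{1}{\gamma}\phi_\sigma$. I would interpret $g$ as $g_\sigma$, which is semialgebraic when $N_\sigma$ is built from semialgebraic activations. Then $\phi_\sigma$ is semialgebraic, since its graph is the image of a semialgebraic set under semialgebraic maps by Tarski--Seidenberg. By Proposition \ref{Relation between semialgebraic and KL}, $\varphi_{\gamma,\sigma}$ is then KL.

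A second delicate point is that the decrease is measured on $\varphi^\gamma$ while the subgradient bound is on $\varphi$. I would handle this by working with the KL property of $\varphi^\gamma$, which is semialgebraic as a parametric minimum. Its subgradient at $x^k$ is controlled through the strict differentiability structure, with $\|w^k-y^k\|$ as in \cite{themelis2020douglas}.
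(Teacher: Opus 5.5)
Your proposal follows the paper's route. The paper's entire argument for this theorem is the relabelling $f\mapsto\frac{1}{\gamma}\phi_\sigma$, $g\mapsto f$ (using $\mathcal{D}_\sigma=\mathrm{prox}_{\gamma(\frac{1}{\gamma}\phi_\sigma)}$), followed by an appeal to Theorem \ref{Global convergence for FISTA-type DYS}, whose KL argument is itself only referred to \cite[Theorem 3.10]{wu2024extrapolated}; so your check of Assumption \ref{Assumption 1} and your KL sketch already go beyond what the paper writes.

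The one step you leave vague is the relative-error bound for $\varphi^\gamma$ itself, which, as you note, is what must pair with the decrease. Strict differentiability is not available under these hypotheses. Instead, apply Clarke's chain rule to $\varphi^\gamma=\varphi^{\mathrm{FB}}\circ\mathrm{prox}_{\gamma f}$, where the forward--backward envelope of $\frac{1}{\gamma}\phi_\sigma+(f+h)$ has gradient $\frac{1}{\gamma}\bigl(I-\gamma J\nabla(f+h)(y)\bigr)^{\top}(y-w)$ at almost every $y$. This gives $\mathrm{dist}(0,\partial\varphi^\gamma(x^k))\le c\,\|y^k-w^k\|$, which is exactly what your finite-length argument needs.
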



\section{FISTA-type PnP-DYS with Line-search for Image Restoration}\label{FISTA-type PnP-DYS with Line-search for Image Restoration}
In this section, we use a quasi-Newton type line-search with the modified Broyden update rule in the FISTA-type PnP-DYS algorithm for the optimization problem \eqref{Prob 2}, under Assumption \ref{Assumption 2}, to reduce the computational cost, converge very quickly, and get a very good result with respect to the FISTA-type PnP-DYS algorithm. Here, we use the same denoiser that is used in FISTA-type PnP-DYS algorithm. This problem is also a subcase of \eqref{Prob 1}. So all the convergence proofs follow from \ref{Convergence Analysis for FISTA-type DYS with Line-search subsection} of Section \ref{FISTA-type DYS with Line-search section}. Now, we discuss our FISTA-type PnP-DYS with Line-search algorithm in Algorithm \ref{FISTA-DYS-LS}.
\begin{algorithm}[h]
\caption{FISTA-type PnP-DYS with Line-search}
\begin{algorithmic}[1]

\State \textbf{Input:} $x^0=w^{-1} \in \mathbb{R}^p$, $\epsilon > 0$, relaxation parameter $\lambda >0$, max backtracks $i_{\max} \le +\infty$, step size $\gamma>0$, $c>\frac{C}{\lambda^2(1+\gamma L_g)^2},$ where $C$ is from Lemma \ref{DYS-SD}.
\State $t^0 = 1$, $k=0$, $(y^0,w^0)\in \text{FISTA-type PnP-DYS}_{\gamma}(x^0)$, and calculate $\varphi^{\gamma}(x^0).$

\While{true}

\State $r^k = y^k - w^k$
\If{$\|r^k\| \le \epsilon$}
    \State \Return $(x^k, y^k, w^k)$
\EndIf
\State $t^{k+1} = \frac{1 + \sqrt{1 + 4(t^k)^2}}{2}$
\State $\beta^k = \frac{t^k - 1}{t^{k+1}}$
\State $z^k=w^k + \beta^k (w^k - w^{k-1})$
\State $\bar{x}^{k+1} = x^k + \lambda (z^k - y^k)$ 

\State Select direction $d^k \in \mathbb{R}^p$, set $\tau^k = 1$, $i^k = 0$

\While{true}

\State $x^{k+1} = (1 - \tau^k)\bar{x}^{k+1} + \tau^k (x^k + d^k)$

\State Compute $(y^{k+1}, w^{k+1}) \in \text{FISTA-type PnP-DYS}_\gamma(x^{k+1})$
\State Evaluate $\varphi^\gamma(x^{k+1})$

\If{$\varphi^\gamma(x^{k+1}) < \varphi^\gamma(x^k) + c\|z^k-y^k\|^2$}
    \State $k \gets k+1$
    \State \textbf{break}
\ElsIf{$i^k = i_{\max}$}
    \State $x^{k+1} = \bar{x}^{k+1}$
    \State Compute $(y^{k+1}, w^{k+1}) \in \text{FISTA-type PnP-DYS}_\gamma(x^{k+1})$
    \State $k \gets k+1$
    \State \textbf{break}
\Else
    \State $\tau^k \gets \tau^k / 2$, \quad $i^k \gets i^k + 1$
\EndIf

\EndWhile

\EndWhile

\end{algorithmic}\label{FISTA-PnP-DYS-LS}
\end{algorithm}

\section{Numerical Experiment}\label{Numerical Experiment}
In this section, all experiments have been done in Python (version 3.14.3) using PyTorch (version 2.10.0+cu128) on an NVIDIA RTX 4500 Ada Generation GPU with an Intel(R) Xeon(R) w7-2595X (2.81 GHz) processor and 256 GB RAM. 

\subsection{Sparse Regularized Quadratic Optimization Problem}
Motivated by \cite{esser2013method,yin2015minimization}, we consider a non-convex sparse regularized quadratic optimization problem
\begin{equation}\label{Sparse structured quadratic optimization problem}
    \min_{X\in\mathbb{R}^{n\times n}} \iota_1\|X\|_1-\frac{\iota_2}{2}\|X\|_F^2+\frac{1}{2}\|X-M\|_F^2+\mathrm{tr}(AXB^{\top}X^{\top}),
\end{equation}
where $A,~B\in\mathbb{R}^{n\times n}$ are given matrices, $M\in\mathbb{R}^{n\times n}$ is observed matrix, $\iota_1,~\iota_2>0$ are regularization parameters, and $\mathrm{tr}(A)=\text{trace of }(A)$. In this problem, the regularization term $\iota_1\|X\|_1-\frac{\iota_2}{2}\|X\|_F^2$ promotes sparse recovery while avoiding unnecessary penalization of large entries, the term $\frac{1}{2}\|X-M\|_F^2$ encourages $X$ to remain close to the observed matrix $M$, and the term $\mathrm{tr}(AXB^{\top}X^{\top})$ represents a smooth quadratic interaction with the decision variable $X$. Now, we choose $f(X)=\iota_1\|X\|_1-\frac{\iota_2}{2}\|X\|_F^2$, $g(X)=\frac{1}{2}\|X-M\|_F^2$, and $h(X)=\mathrm{tr}(AXB^{\top}X^{\top})$ to form \eqref{Sparse structured quadratic optimization problem} as the main model \eqref{Prob 1}. Here $f$ is proper, lower semicontinuous and $\iota_2$-weakly convex, and $g,~h$ are $L_g=1,~L_h=2\|A\|\|B\|$-smooth, respectively. So, Assumption \ref{Assumption 1} is satisfied. Now, here we try to examine practical results for Algorithm \ref{DYS}, \ref{FISTA-DYS}, \ref{FISTA-DYS-LS} with the extrapolated-DYS algorithm \cite[Algorithm 3.1]{wu2024extrapolated} and the DRFDR algorithm \cite[Algorithm 1]{dao2025doubly}. For this practical work, we take parameter values that are used in all algorithms and satisfy the convergence conditions as $n=200,\;\alpha=0.25,\;\lambda=1.0,\;\eta=1.3,\;\theta=0.8,\;\iota_1=0.5,\;\iota_2=0.5,\;\text{err}=10^{-6}.$ Now, in Figure \ref{fig:Final_Results/Sparsity/gamma_0.002_residual_convergence.png} and Figure \ref{fig:Final_Results/Sparsity/gamma_0.005_residual_convergence.png}, we see how the residual values behave for different algorithms for the choices of $\gamma=0.002$ and $\gamma=0.005$, respectively. Then, in Table \ref{table: Sparse Quadratic problem}, we see how many iterations and computational time (in sec) are required by different algorithms for the same values of parameters. From Figure \ref{fig:Final_Results/Sparsity/gamma_0.002_residual_convergence.png}, Figure \ref{fig:Final_Results/Sparsity/gamma_0.005_residual_convergence.png}, and Table \ref{table: Sparse Quadratic problem}, we observe that the residual values for Algorithms\ref{FISTA-DYS} and \ref{FISTA-DYS-LS} converge faster to $<\text{err}$ than DYS, extrapolated-DYS, and DRFDR algorithms.

\begin{figure}[h]
    \centering
    \begin{subfigure}{0.42\textwidth}
        \centering
        \includegraphics[width=\linewidth]{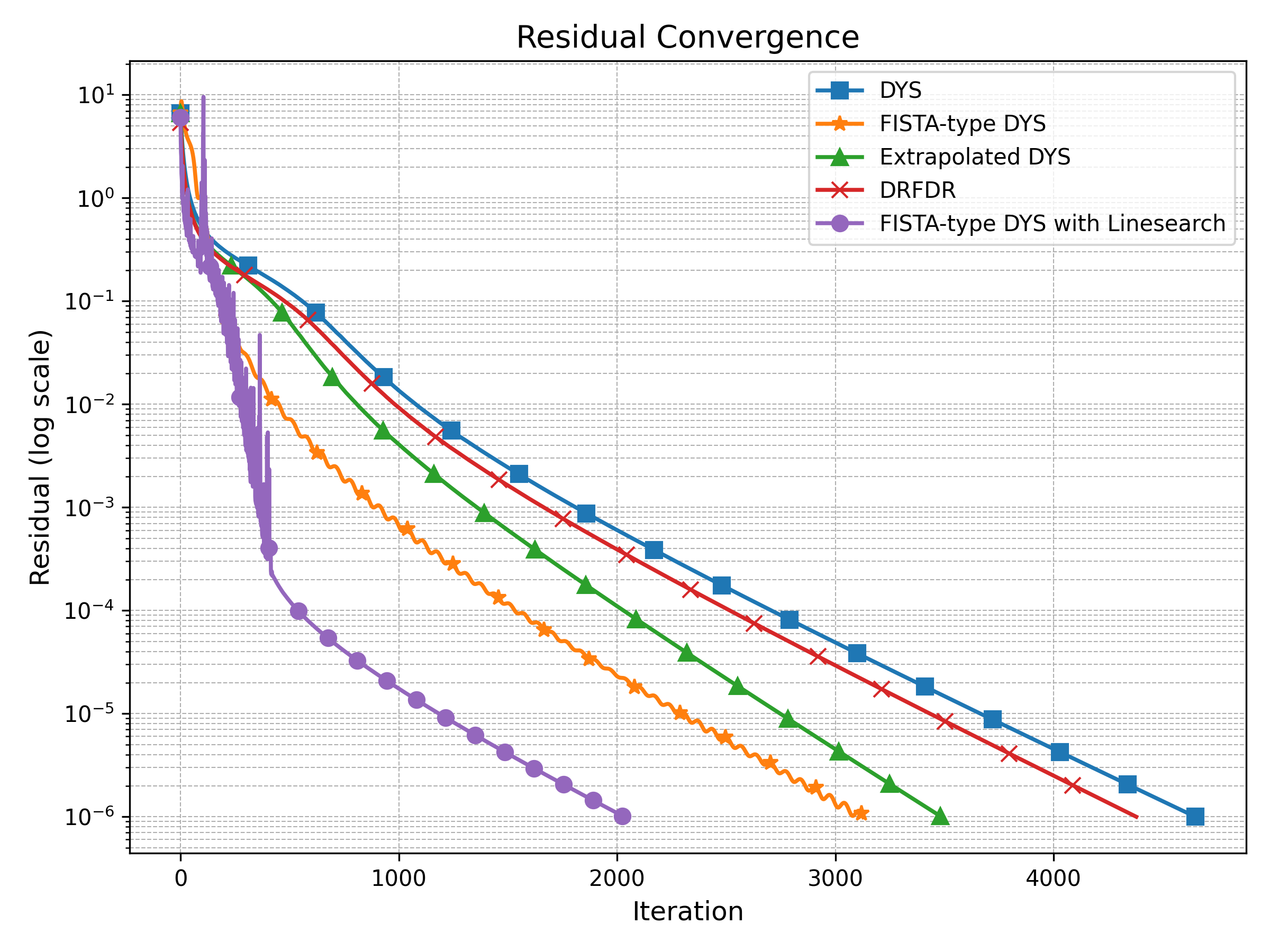}
        \caption{Residual Convergence with $\gamma=0.002$}
        \label{fig:Final_Results/Sparsity/gamma_0.002_residual_convergence.png}
    \end{subfigure}
    \hfill
    \begin{subfigure}{0.42\textwidth}
        \centering
        \includegraphics[width=\linewidth]{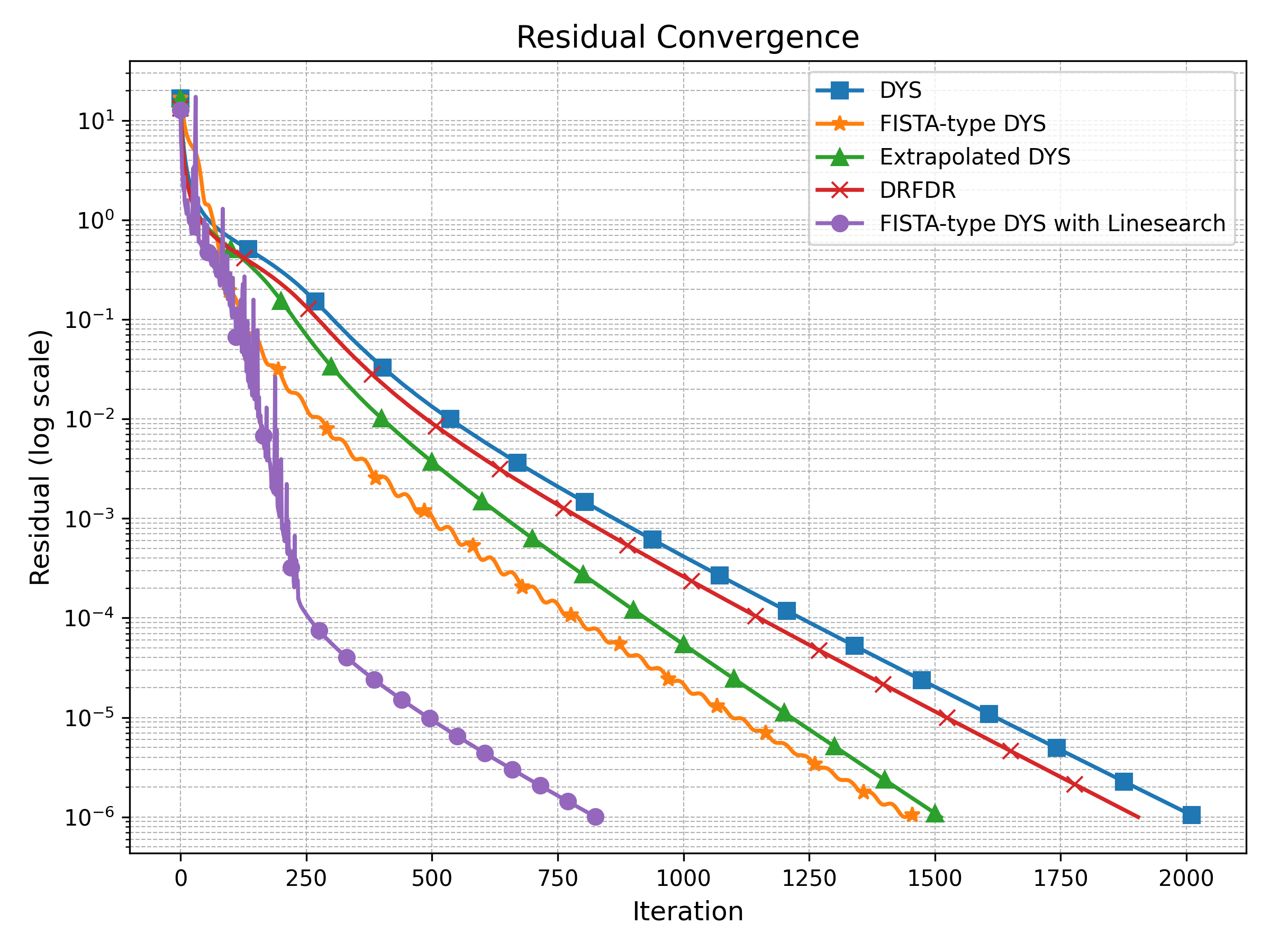}
        \caption{Residual Convergence with $\gamma=0.005$}
        \label{fig:Final_Results/Sparsity/gamma_0.005_residual_convergence.png}
    \end{subfigure}
    \caption{Residual convergence vs iterations for sparse regularized quadratic optimization problem}
    \label{Fig:Residual Convergence for sparse regularized quadratic optimization problem}
\end{figure}

\begin{table}[h]
\centering
\resizebox{1.0\linewidth}{!}{%
\begin{tabular}{c cc cc}
\toprule
\textbf{Values of $\gamma$} & \multicolumn{2}{c}{0.002} & \multicolumn{2}{c}{0.005}\\
\midrule
\textbf{Algorithms} & Iterations & times(in sec) & Iterations & times(in sec)\\
\midrule
DYS & 4651 & 2.8279 & 2019 & 1.2659\\
\midrule
Extrapolated-DYS & 3487 & 2.0762 & 1513 & 0.9816\\
\midrule
DRFDR & 4380 & 2.4711 & 1905 & 1.14504\\
\midrule
\textbf{FISTA-type DYS} & \textbf{3129} & \textbf{2.4985} & \textbf{1462} & \textbf{1.2458}\\
\midrule
\textbf{FISTA-type DYS with Line-search} & \textbf{2034} & \textbf{2490.0967} & \textbf{827} & \textbf{1219.8583}\\
\bottomrule
\end{tabular}
}
\caption{Number of iterations and total computational time taken by different algorithms to achieve the residual error $<10^{-6}$ for various values of $\gamma$}
\label{table: Sparse Quadratic problem}
\end{table}

\subsection{Nonnegative Low Rank Matrix Completion Problem}
Motivated by \cite{wang2021scalable,alcantara2025four}, here we consider a nonnegative low-rank matrix completion problem
\begin{equation}
    \min_{X\in\mathbb{R}^{n\times d}} \left\{\iota_1\|X\|_{\star}-\frac{\iota_2}{2}\|X\|_F^2 + \frac{1}{2}\|\mathcal{P}_{\Omega}(X-M)\|^2 + \frac{\beta}{2}\min_{Y\in\mathbb{R}_+^{n\times d}}\|X-Y\|_F^2\right\},\label{Nonnegative LRM problem}
\end{equation}
where $M\in\mathbb{R}^{n\times d}$ denotes an observed matrix, $\iota_1,~\iota_2>0$ are regularization parameters, $\Omega$ denotes the set of observed entries, and $\mathcal{P}_{\Omega}$ denotes the orthogonal projection onto the set $\Omega$ such that
\begin{align*}
    \left(\mathcal{P}_{\Omega}(X)\right)_{i,j}&=\begin{cases}
        X_{i,j}\;, \text{ if $(i,j)\in \Omega$},\\
        0\quad\;, \text{ otherwise}.
    \end{cases}
\end{align*}

In this problem, the regularization term $\iota_1\|X\|_{\star}-\frac{\iota_2}{2}\|X\|_F^2$ promotes low-ranked matrix recovery problem while avoiding unnecessary penalization of large entries, and the term $\frac{\beta}{2}\min_{Y\in\mathbb{R}_+^{n\times d}}\|X-Y\|_F^2$ ensures the nonnegativity of the recovered low-ranked matrix. Here, we choose $f(X)=\iota_1\|X\|_{\star}-\frac{\iota_2}{2}\|X\|_F^2,\;g(X)=\frac{1}{2}\|\mathcal{P}_{\Omega}(X-M)\|^2,$ and $h(X)=\frac{\beta}{2}\min_{Y\in\mathbb{R}_+^{n\times d}}\|X-Y\|_F^2$. So, $L_g=1$, $L_h=1$ and $f$ is lower semicontinuous, and $\iota_2$-weakly convex and Assumption \ref{Assumption 1} is satisfied. For this problem, we use Algorithm \ref{DYS}, \ref{FISTA-DYS}, \ref{FISTA-DYS-LS}, extrapolated-DYS, and DRFDR to examine the residual convergence for all of the algorithms. Here, we use two types of matrices, which are based on randomized data and a dataset, which is taken from the ``2025 Distribution zone substation data" published by Ausgrid\footnote{\url{https://www.ausgrid.com.au/about-us/about-ausgrid/research-data-sets/distribution-zone-substation-data}}.
\subsubsection{Using Randomized Data}
In this case, the matrix $M$ is generated randomly, and then we use our proposed algorithms. Here, we choose the parameters $n=200,\;\alpha=0.2,\;\beta=1.0,\;\lambda=0.6,\;\eta=1.1,\;\theta=0.8,\;\iota_1=0.8,\;\iota_2=0.1,\;\text{err}=10^{-6}$, which are satisfied our decent properties. Now, in Figure \ref{fig:Final_Results/Nonnegative_LRM/gamma_0.035_residual_convergence.png} and Figure \ref{fig:Final_Results/Nonnegative_LRM/gamma_0.05_residual_convergence.png}, we see how the residual values behave for different algorithms for the choices of $\gamma=0.035$ and $\gamma=0.05$, respectively. Then, in Table \ref{table:LRM problem}, we see how many iterations and computational time (in sec) are required by different algorithms for the same values of parameters. From Figure \ref{fig:Final_Results/Nonnegative_LRM/gamma_0.035_residual_convergence.png}, Figure \ref{fig:Final_Results/Nonnegative_LRM/gamma_0.05_residual_convergence.png}, and Table \ref{table:LRM problem}, we observe that the residual values for Algorithms \ref{FISTA-DYS} and \ref{FISTA-DYS-LS} converge faster to $<\text{err}$ than DYS, extrapolated-DYS, and DRFDR algorithms.

\begin{figure}[h]
    \centering
    \begin{subfigure}{0.42\textwidth}
        \centering
        \includegraphics[width=\linewidth]{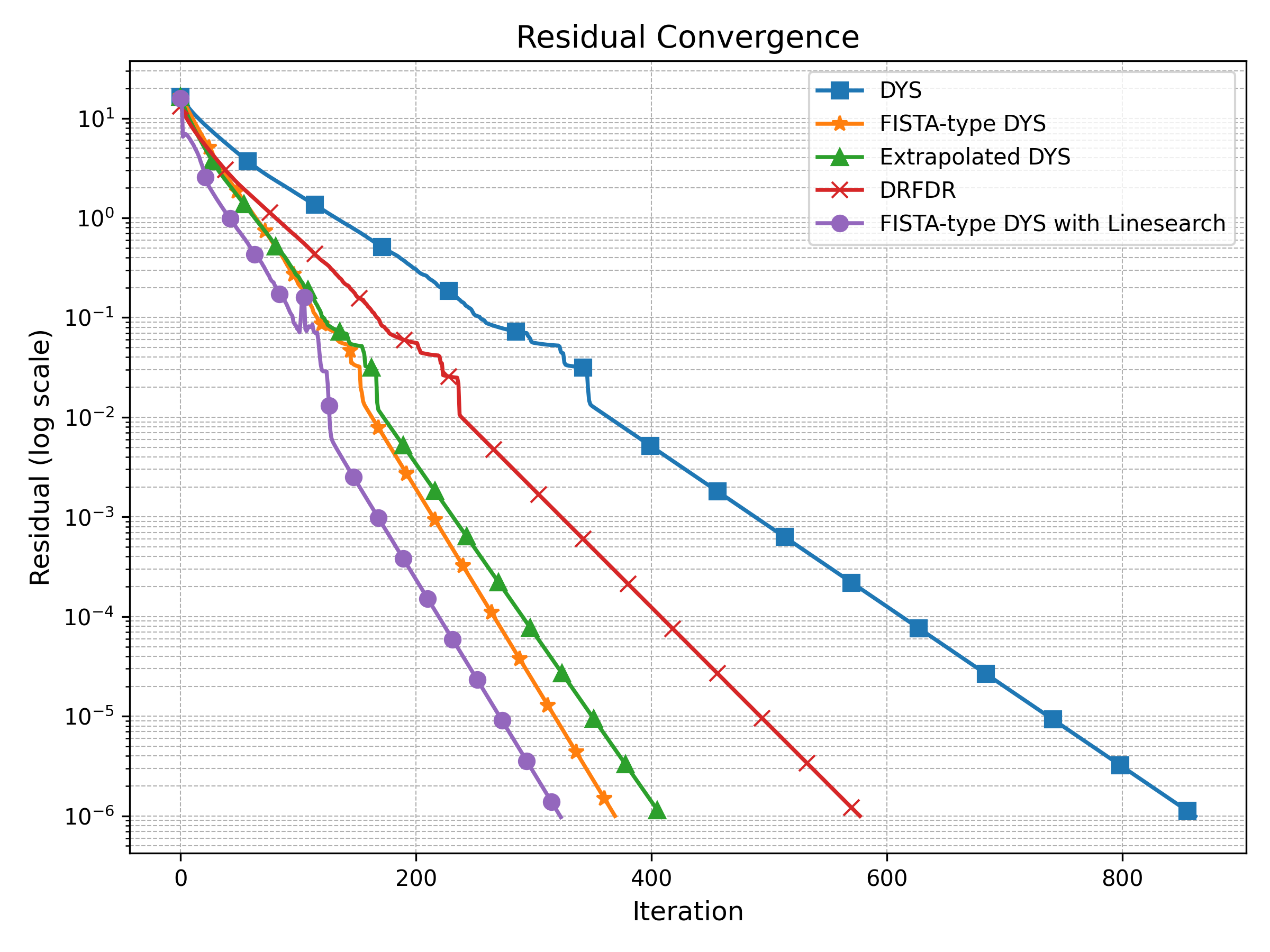}
        \caption{Residual Convergence with $\gamma=0.035$}
        \label{fig:Final_Results/Nonnegative_LRM/gamma_0.035_residual_convergence.png}
    \end{subfigure}
    \hfill
    \begin{subfigure}{0.42\textwidth}
        \centering
        \includegraphics[width=\linewidth]{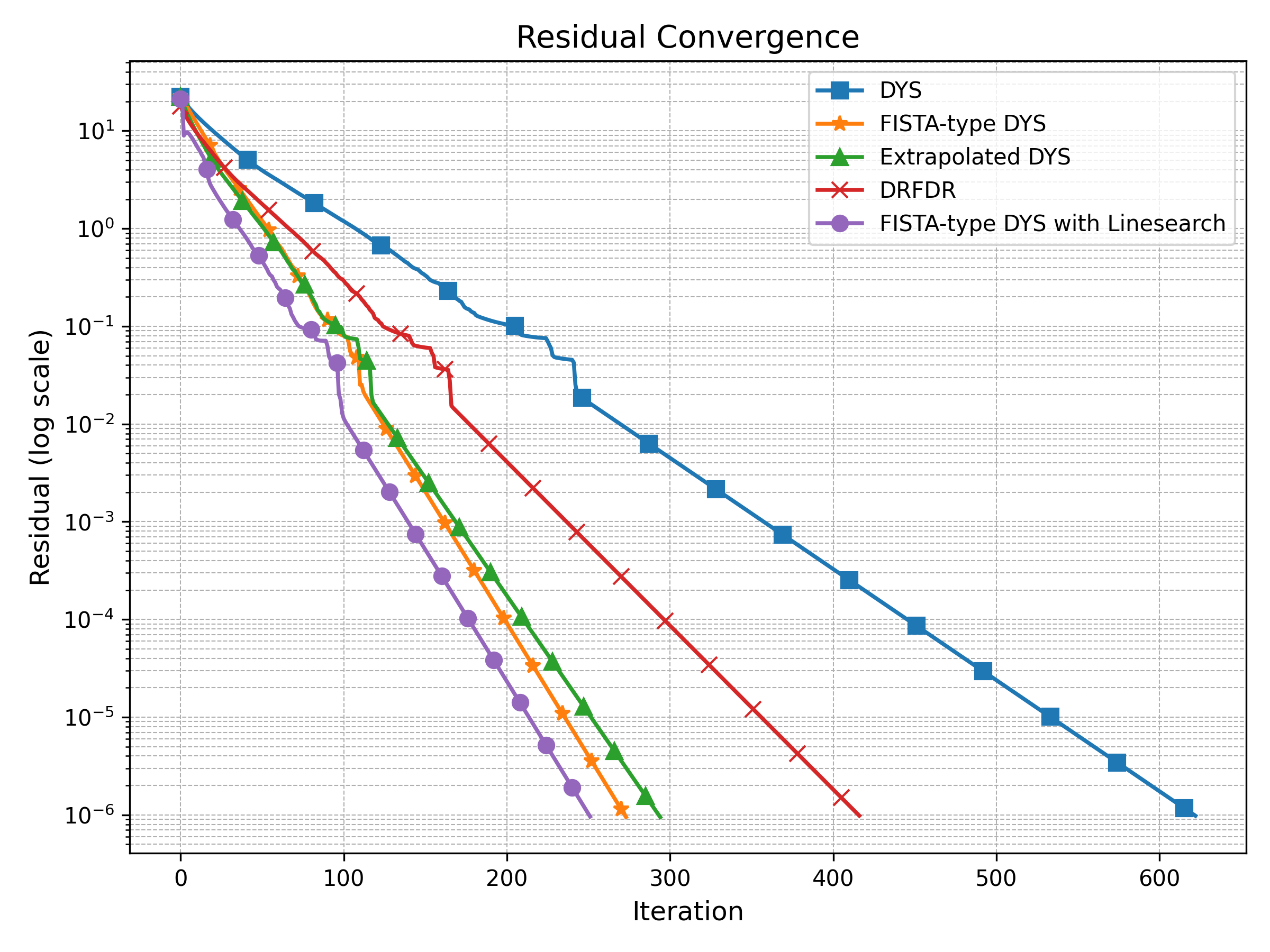}
        \caption{Residual Convergence with $\gamma=0.05$}
        \label{fig:Final_Results/Nonnegative_LRM/gamma_0.05_residual_convergence.png}
    \end{subfigure}
\caption{Residual convergence vs iterations for the nonnegative LRM problem using randomized data}
\label{fig:Residual Convergence graph for randomized data}
\end{figure}

\begin{table}[h]
\centering
\resizebox{1.0\linewidth}{!}{%
\begin{tabular}{c cc cc}
\toprule
\textbf{Values of $\gamma$} & \multicolumn{2}{c}{0.035} & \multicolumn{2}{c}{0.05}\\
\midrule
\textbf{Algorithms} & Iterations & times(in sec) & Iterations & times(in sec)\\
\midrule
DYS & 863 & 0.6712 & 623 & 0.5370\\
\midrule
Extrapolated-DYS & 410 & 0.2516 & 295 & 0.2336\\
\midrule
DRFDR & 578 & 0.3550 & 417 & 0.3512\\
\midrule
\textbf{FISTA-type DYS} & \textbf{370} & \textbf{0.2856} & \textbf{274} & \textbf{0.2633}\\
\midrule
\textbf{FISTA-type DYS with Line-search} & \textbf{324} & \textbf{1032.7709} & \textbf{252} & \textbf{831.4558}\\
\bottomrule
\end{tabular}
}
\caption{Number of iterations and total computational time taken by different algorithms to achieve the residual error $<10^{-6}$ for various values of $\gamma$}
\label{table:LRM problem}
\end{table}

\subsubsection{Using Given Data}
In this case, the matrix $M$ is generated from a dataset from the ``2025 Distribution zone substation data" published by Ausgrid. We use the load data from five 11kV meters, namely ``Beacon Hill", ``Botany", ``City East", ``Belrose", and ``Surry Hills", and form a $400\times364$ matrix. Then we use our proposed algorithms. But this is a large-scale problem, so for this experiment the Modified Broyden method does not work properly; it takes huge computational time and space, so for this type of problem we use L-BFGS \cite{liu1989limited,chen2014large,al2014broyden} in place of Modified Broyden to avoid the computation of the Hessian directly, and use an approximated Hessian. Here, we choose the parameters $\alpha=0.5,\;\beta=0.5,\;\lambda=0.9,\;\eta=1.6,\;\theta=0.5,\;\iota_1=0.8,\;\iota_2=0.1,\;\text{err}=10^{-6}$, which are satisfied our decent properties. Now, in Figure \ref{fig:Final_Results/LRM_with_Data/gamma_0.03_residual_convergence.png} and Figure \ref{fig:Final_Results/LRM_with_Data/gamma_0.05_residual_convergence.png}, we see how the residual values behave for different algorithms for the choices of $\gamma=0.03$ and $\gamma=0.05$, respectively. Then, in Table \ref{table:LRM problem with data}, we see how many iterations and computational time (in sec) are required by different algorithms for the same values of parameters. From Figure \ref{fig:Final_Results/LRM_with_Data/gamma_0.03_residual_convergence.png}, Figure \ref{fig:Final_Results/LRM_with_Data/gamma_0.05_residual_convergence.png}, and Table \ref{table:LRM problem with data}, we observe that, the residual values for Algorithm \ref{FISTA-DYS} and \ref{FISTA-DYS-LS} converge faster to $<\text{err}$ than DYS, extrapolated-DYS, and DRFDR algorithms.
\begin{figure}[h]
    \centering
    \begin{subfigure}{0.42\textwidth}
        \centering
        \includegraphics[width=\linewidth]{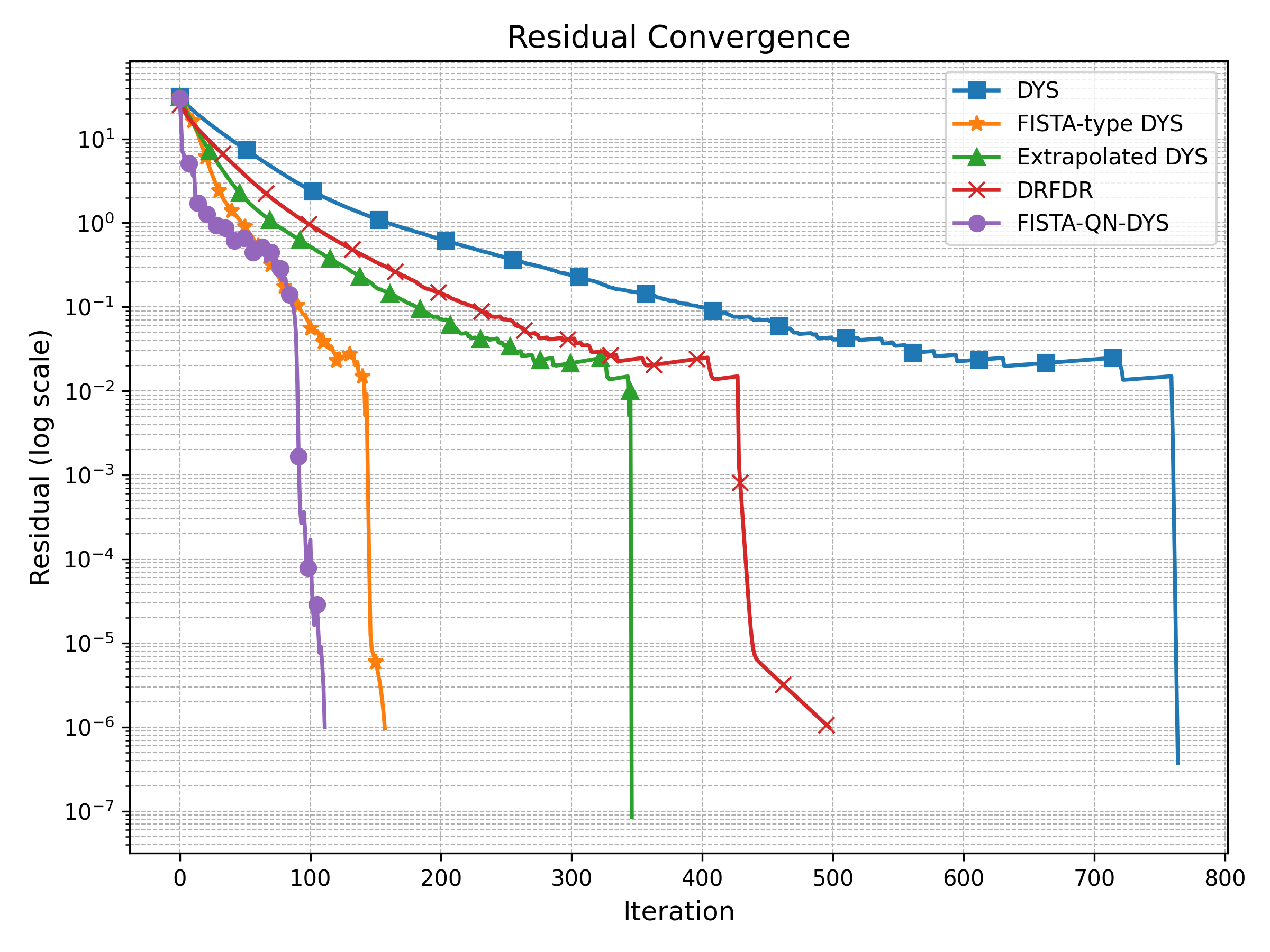}
        \caption{Residual Convergence with $\gamma=0.03$}
        \label{fig:Final_Results/LRM_with_Data/gamma_0.03_residual_convergence.png}
    \end{subfigure}
    \hfill
    \begin{subfigure}{0.42\textwidth}
        \centering
        \includegraphics[width=\linewidth]{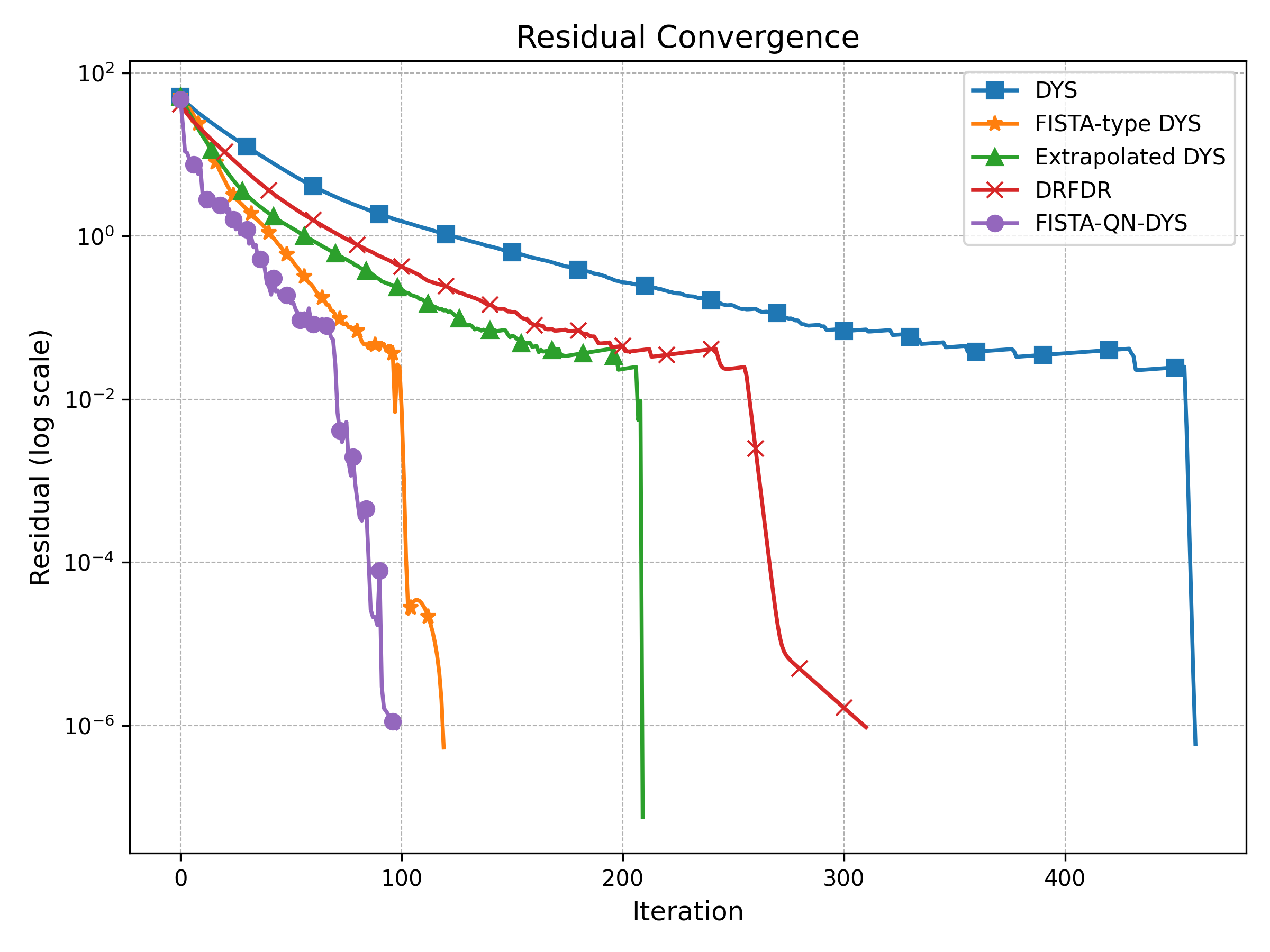}
        \caption{Residual Convergence with $\gamma=0.05$}
        \label{fig:Final_Results/LRM_with_Data/gamma_0.05_residual_convergence.png}
    \end{subfigure}
\caption{Residual convergence vs iterations for the nonnegative LRM problem using the given data}
\label{fig:Residual Convergence graph for given data}
\end{figure}
\begin{table}[h]
\centering
\resizebox{1.0\linewidth}{!}{%
\begin{tabular}{c cc cc}
\toprule
\textbf{Values of $\gamma$} & \multicolumn{2}{c}{0.03} & \multicolumn{2}{c}{0.05}\\
\midrule
\textbf{Algorithms} & Iterations & times(in sec) & Iterations & times(in sec)\\
\midrule
DYS & 765 & 0.6781 & 460 & 0.4563\\
\midrule
Extrapolated-DYS & 347 & 0.3435 & 210 & 0.1977\\
\midrule
DRFDR & 498 & 0.4504 & 311 & 0.2576\\
\midrule
\textbf{FISTA-type DYS} & \textbf{158} & \textbf{0.1829} & \textbf{120} & \textbf{0.1540}\\
\midrule
\textbf{FISTA-type DYS with Line-search} & \textbf{112} & \textbf{0.8145} & \textbf{99} & \textbf{0.6340}\\
\bottomrule
\end{tabular}
}
\caption{Number of iterations and total computational time taken by different algorithms to achieve the residual error $<10^{-6}$ for various values of $\gamma$}
\label{table:LRM problem with data}
\end{table}

\subsection{Image Restoration}

In this experiment, we consider an image restoration problem modeled as
\begin{equation}
    b=Ax+\xi,\label{image restoration problem}
\end{equation}
where $A$ is a blur operator, $x$ is the real image, $b$ is the observed image, and $\xi$ is an additive Gaussian noise with distribution $\mathcal{N}(0,\nu^2).$ Now, recovering the real image from this model \eqref{image restoration problem} is a very difficult task due to the presence of noise and the blur operator. So, by using the negative log-likelihood formulation for Gaussian noise in the inverse problem \eqref{image restoration problem} and then using the regularization term $\frac{1}{\eta}\phi_{\sigma}(x)$ and $\frac{\beta}{2}\|x\|^2$ as discussed in \cite{wu2024extrapolated}, we get

\begin{equation}
    \min_{x\in\mathbb{R}^n}\frac{1}{2\nu}\|Ax-b\|^2+\frac{1}{\eta}\phi_{\sigma}(x)+\frac{\beta}{2}\|x\|^2,\label{reformulated image restoration problem}
\end{equation}
where $\eta>0$ and $\beta>0$ are the regularizing parameters and $\nu$ and $\sigma$ are noise levels. Now, we choose $f(x)=\frac{1}{2\nu}\|Ax-b\|^2$ and $h(x)=\frac{\beta}{2}\|x\|^2$ and form the problem \eqref{reformulated image restoration problem} to \eqref{Prob 2}. This is also a large-scale problem, so we replace the modified Broyden update rule with L-BFGS. Here, $f$ and $h$ are $L_f,\;L_h$-smooth, respectively, where $L_f=\frac{\|A^{\top}A\|}{\nu^2}$ and $L_h=\beta,$ so the Assumption \ref{Assumption 2} is satisfied. Now, we take the noise levels as $2.55,~7.65,~12.75$ and the corresponding step sizes ($\gamma$) as $0.7,~0.45,~0.3$, respectively, and a step size of $0.5$ is chosen for other cases. We also take the relaxation parameter $\lambda=0.8$ and the extrapolated parameter $\alpha=0.5$. Here, we also consider that the iteration terminates if the relative difference between two consecutive objective values is less than $10^{-8}$. Now, we compare the peak signal-to-noise ratio (PSNR) values of some images after deblurring by our proposed Algorithms \ref{FISTA-PnP-DYS}, \ref{FISTA-PnP-DYS-LS}, and the extrapolated PnP-DYS algorithm \cite{wu2024extrapolated} and the DRFDR with PnP algorithm. In Figures \ref{fig:Butterfly images for 2.55}, \ref{fig:Leaves images for 2.55}, \ref{fig:Starfish images for 2.55}, \ref{fig:Butterfly images for 7.65}, \ref{fig:Leaves images for 7.65}, \ref{fig:Starfish images for 7.65}, \ref{fig:Butterfly images for 12.75}, \ref{fig:Leaves images for 12.75}, \ref{fig:Starfish images for 12.75}, we see how the original input images change after blurring for various algorithms and various noise levels. In Figures \ref{fig:PSNR graphs for noise level 2.55}, \ref{fig:PSNR graphs for noise level 7.65}, \ref{fig:PSNR graphs for noise level 12.75}, we see how the PSNR values change in each iteration for various algorithms and various noise levels. In Table \ref{table:Image restoration problem}, we see how our proposed algorithms work better than extrapolated PnP-DYS and DRFDR with PnP algorithms to restore images from blurred images for various noise levels. For this experiment, we take the help of a code, which is available on GitHub\footnote{\url{https://github.com/Huang-chao-yan/convergent_pnp/tree/main/PnP_restoration}}.
\begin{figure}[H]
    \centering
    \begin{subfigure}{0.18\textwidth}
        \centering
        \fbox{\includegraphics[width=\linewidth]{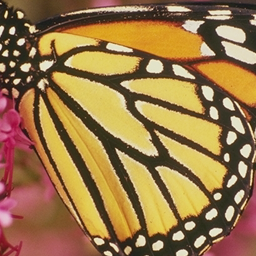}}
        \caption{Input image}
        \label{fig:Input butterfly image for noise level 2.55}
    \end{subfigure}
    \hfill
    \begin{subfigure}{0.18\textwidth}
        \centering
        \fbox{\includegraphics[width=\linewidth]{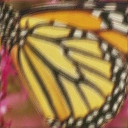}}
        \caption{Blurred image}
        \label{fig:Blurred butterfly image for noise level 2.55}
    \end{subfigure}
    \hfill
    \begin{subfigure}{0.18\textwidth}
        \centering
        \fbox{\includegraphics[width=\linewidth]{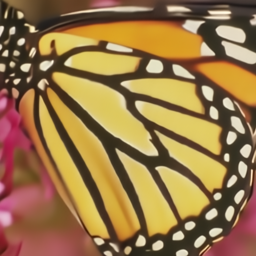}}
        \caption{Extrapolated PnP-DYS}
        \label{fig:Extrapolated_PnP butterfly image for noise level 2.55}
    \end{subfigure}

    \vspace{0.1cm}

    \begin{subfigure}{0.18\textwidth}
        \centering
        \fbox{\includegraphics[width=\linewidth]{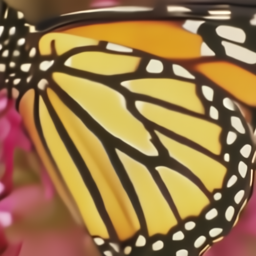}}
        \caption{PnP-DRFDR}
        \label{fig:PnP-DRFDR butterfly image for noise level 2.55}
    \end{subfigure}
    \hfill
    \begin{subfigure}{0.18\textwidth}
        \centering
        \fbox{\includegraphics[width=\linewidth]{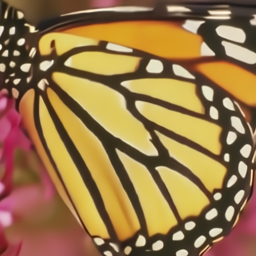}}
        \caption{FISTA-type PnP-DYS}
        \label{fig:FISTA_PnP_DYS butterfly image for noise level 2.55}
    \end{subfigure}
    \hfill
    \begin{subfigure}{0.18\textwidth}
        \centering
        \fbox{\includegraphics[width=\linewidth]{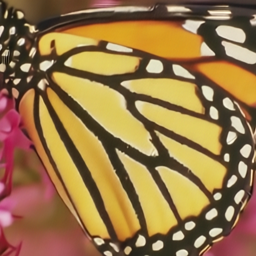}}
        \caption{FISTA-type PnP-DYS with Line-search}
        \label{fig:FISTA-type PnP-DYS with Line-search butterfly image for noise level 2.55}
    \end{subfigure}
    \caption{Images of the butterfly at noise level $2.55$}
    \label{fig:Butterfly images for 2.55}
\end{figure}

\begin{figure}[H]
    \centering
    \begin{subfigure}{0.18\textwidth}
        \centering
        \fbox{\includegraphics[width=\linewidth]{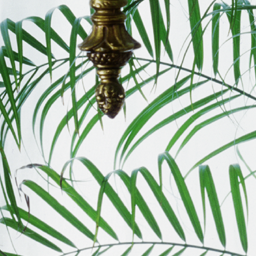}}
        \caption{Input image}
        \label{fig:Input leaves image for noise level 2.55}
    \end{subfigure}
    \hfill
    \begin{subfigure}{0.18\textwidth}
        \centering
        \fbox{\includegraphics[width=\linewidth]{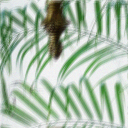}}
        \caption{Blurred image}
        \label{fig:Blurred leaves image for noise level 2.55}
    \end{subfigure}
    \hfill
    \begin{subfigure}{0.18\textwidth}
        \centering
        \fbox{\includegraphics[width=\linewidth]{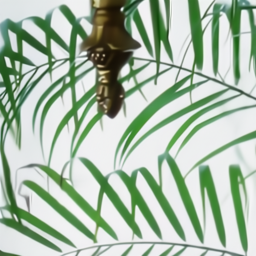}}
        \caption{Extrapolated PnP-DYS}
        \label{fig:Extrapolated_PnP leaves image for noise level 2.55}
    \end{subfigure}

    \vspace{0.1cm}

    \begin{subfigure}{0.18\textwidth}
        \centering
        \fbox{\includegraphics[width=\linewidth]{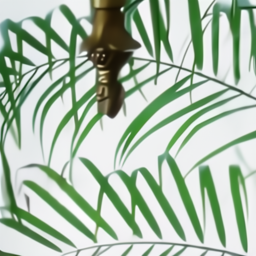}}
        \caption{PnP-DRFDR}
        \label{fig:PnP-DRFDR leaves image for noise level 2.55}
    \end{subfigure}
    \hfill
    \begin{subfigure}{0.18\textwidth}
        \centering
        \fbox{\includegraphics[width=\linewidth]{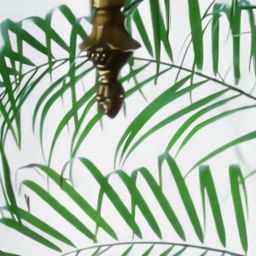}}
        \caption{FISTA-type PnP-DYS}
        \label{fig:FISTA_PnP_DYS leaves image for noise level 2.55}
    \end{subfigure}
    \hfill
    \begin{subfigure}{0.18\textwidth}
        \centering
        \fbox{\includegraphics[width=\linewidth]{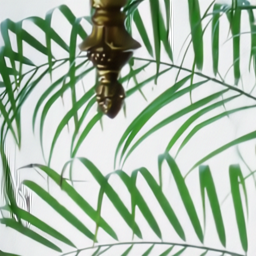}}
        \caption{FISTA-type PnP-DYS with Line-search}
        \label{fig:FISTA-type PnP-DYS with Line-search leaves image for noise level 2.55}
    \end{subfigure}
    \caption{Images of leaves at noise level $2.55$}
    \label{fig:Leaves images for 2.55}
\end{figure}

\begin{figure}[H]
    \centering
    \begin{subfigure}{0.18\textwidth}
        \centering
        \fbox{\includegraphics[width=\linewidth]{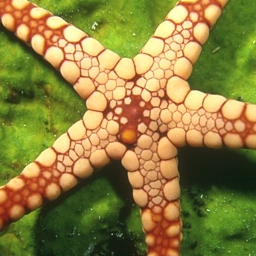}}
        \caption{Input image}
        \label{fig:Input starfish image for noise level 2.55}
    \end{subfigure}
    \hfill
    \begin{subfigure}{0.18\textwidth}
        \centering
        \fbox{\includegraphics[width=\linewidth]{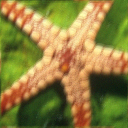}}
        \caption{Blurred image}
        \label{fig:Blurred starfish image for noise level 2.55}
    \end{subfigure}
    \hfill
    \begin{subfigure}{0.18\textwidth}
        \centering
        \fbox{\includegraphics[width=\linewidth]{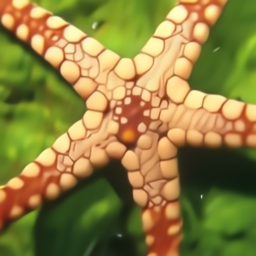}}
        \caption{Extrapolated PnP-DYS}
        \label{fig:Extrapolated_PnP starfish image for noise level 2.55}
    \end{subfigure}

    \vspace{0.1cm}

    \begin{subfigure}{0.18\textwidth}
        \centering
        \fbox{\includegraphics[width=\linewidth]{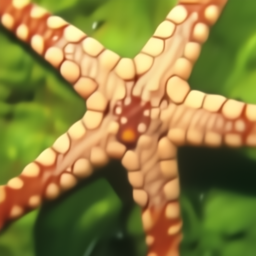}}
        \caption{PnP-DRFDR}
        \label{fig:PnP-DRFDR starfish image for noise level 2.55}
    \end{subfigure}
    \hfill
    \begin{subfigure}{0.18\textwidth}
        \centering
        \fbox{\includegraphics[width=\linewidth]{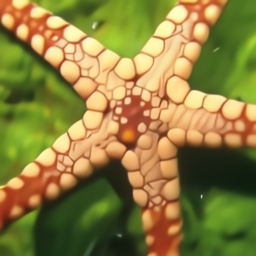}}
        \caption{FISTA-type PnP-DYS}
        \label{fig:FISTA_PnP_DYS starfish image for noise level 2.55}
    \end{subfigure}
    \hfill
    \begin{subfigure}{0.18\textwidth}
        \centering
        \fbox{\includegraphics[width=\linewidth]{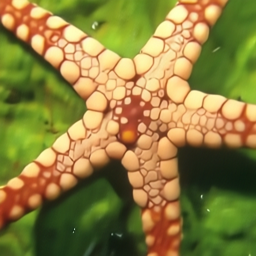}}
        \caption{FISTA-type PnP-DYS with Line-search}
        \label{fig:FISTA-type PnP-DYS with Line-search starfish image for noise level 2.55}
    \end{subfigure}
    \caption{Images of Starfish in noise level $2.55$}
    \label{fig:Starfish images for 2.55}
\end{figure}

\begin{figure}[H]
    \centering
    \begin{subfigure}{0.34\textwidth}
        \includegraphics[width=\linewidth]{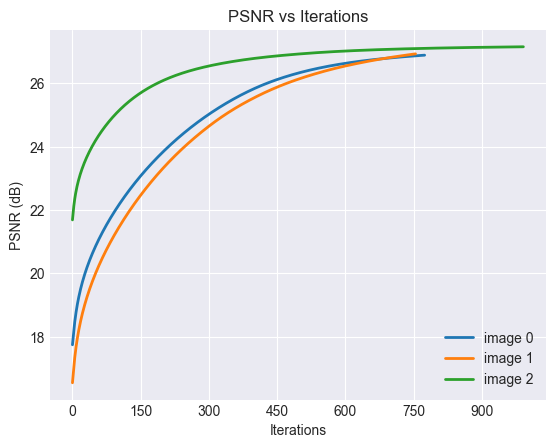}
        \caption{Extrapolated PnP-DYS}
        \label{fig:Extrapolated PnP PSNR for noise level 2.55}
    \end{subfigure}
    \hfill
    \begin{subfigure}{0.34\textwidth}
        \includegraphics[width=\linewidth]{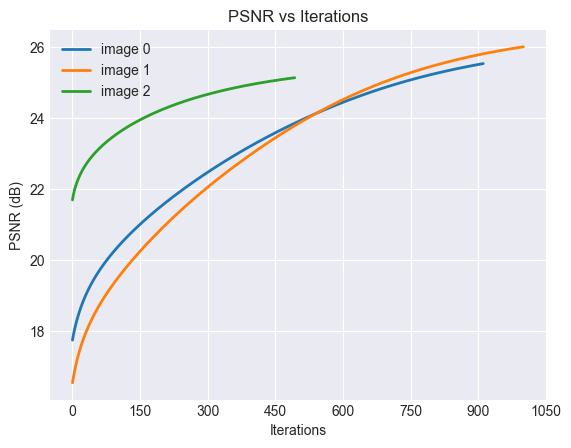}
        \caption{PnP-DRFDR}
        \label{fig:PnP-DRFDR PSNR for noise level 2.55}
    \end{subfigure}
    \hfill
    \begin{subfigure}{0.34\textwidth}
        \includegraphics[width=\linewidth]{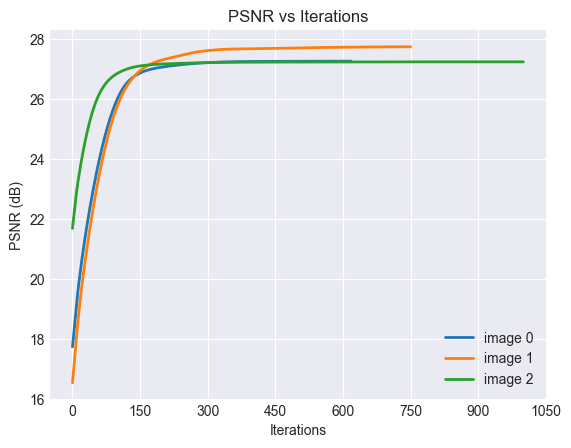}
        \caption{FISTA-type PnP-DYS}
        \label{fig:FISTA-type PnP-DYS PSNR for noise level 2.55}
    \end{subfigure}
    \hfill
    \begin{subfigure}{0.34\textwidth}
        \includegraphics[width=\linewidth]{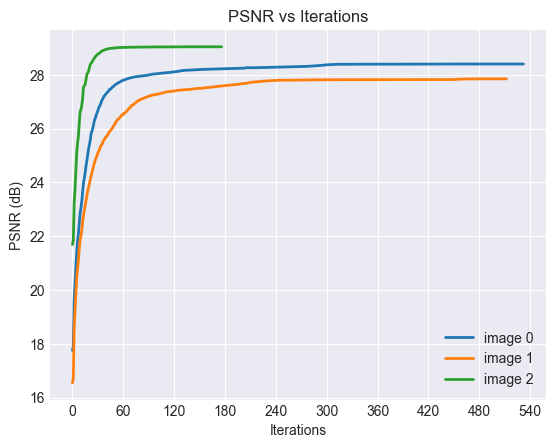}
        \caption{FISTA-type PnP-DYS with Line-search}
        \label{fig:FISTA-type PnP-DYS with Line-search PSNR for noise level 2.55}
    \end{subfigure}
    \caption{Iterations vs PSNR value graphs for noise level $2.55$}
    \label{fig:PSNR graphs for noise level 2.55}
\end{figure}

\begin{figure}[H]
    \centering
    \begin{subfigure}{0.18\textwidth}
        \centering
        \fbox{\includegraphics[width=\linewidth]{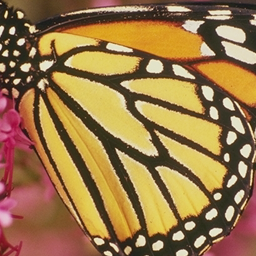}}
        \caption{Input image}
        \label{fig:Input butterfly image for noise level 7.65}
    \end{subfigure}
    \hfill
    \begin{subfigure}{0.18\textwidth}
        \centering
        \fbox{\includegraphics[width=\linewidth]{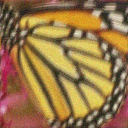}}
        \caption{Blurred image}
        \label{fig:Blurred butterfly image for noise level 7.65}
    \end{subfigure}
    \hfill
    \begin{subfigure}{0.18\textwidth}
        \centering
        \fbox{\includegraphics[width=\linewidth]{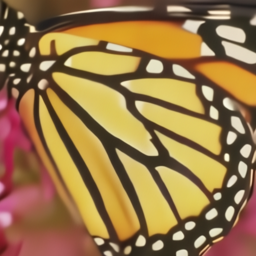}}
        \caption{Extrapolated PnP-DYS}
        \label{fig:Extrapolated_PnP butterfly image for noise level 7.65}
    \end{subfigure}

    \vspace{0.1cm}

    \begin{subfigure}{0.18\textwidth}
        \centering
        \fbox{\includegraphics[width=\linewidth]{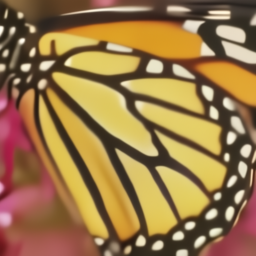}}
        \caption{PnP-DRFDR}
        \label{fig:PnP-DRFDR butterfly image for noise level 7.65}
    \end{subfigure}
    \hfill
    \begin{subfigure}{0.18\textwidth}
        \centering
        \fbox{\includegraphics[width=\linewidth]{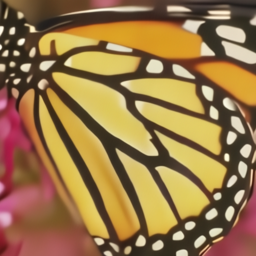}}
        \caption{FISTA-type PnP-DYS}
        \label{fig:FISTA_PnP_DYS butterfly image for noise level 7.65}
    \end{subfigure}
    \hfill
    \begin{subfigure}{0.18\textwidth}
        \centering
        \fbox{\includegraphics[width=\linewidth]{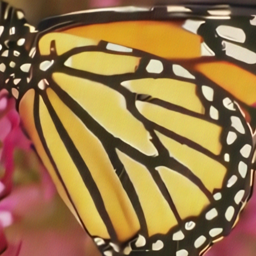}}
        \caption{FISTA-type PnP-DYS with Line-search}
        \label{fig:FISTA-type PnP-DYS with Line-search butterfly image for noise level 7.65}
    \end{subfigure}
    \caption{Images of the butterfly at noise level $7.65$}
    \label{fig:Butterfly images for 7.65}
\end{figure}

\begin{figure}[H]
    \centering
    \begin{subfigure}{0.18\textwidth}
        \centering
        \fbox{\includegraphics[width=\linewidth]{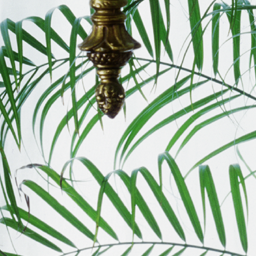}}
        \caption{Input image}
        \label{fig:Input leaves image for noise level 7.65}
    \end{subfigure}
    \hfill
    \begin{subfigure}{0.18\textwidth}
        \centering
        \fbox{\includegraphics[width=\linewidth]{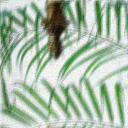}}
        \caption{Blurred image}
        \label{fig:Blurred leaves image for noise level 7.65}
    \end{subfigure}
    \hfill
    \begin{subfigure}{0.18\textwidth}
        \centering
        \fbox{\includegraphics[width=\linewidth]{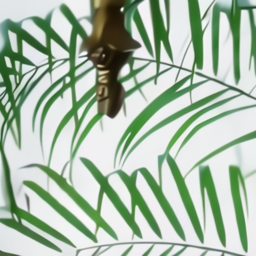}}
        \caption{Extrapolated PnP-DYS}
        \label{fig:Extrapolated_PnP leaves image for noise level 7.65}
    \end{subfigure}

    \vspace{0.1cm}

    \begin{subfigure}{0.18\textwidth}
        \centering
        \fbox{\includegraphics[width=\linewidth]{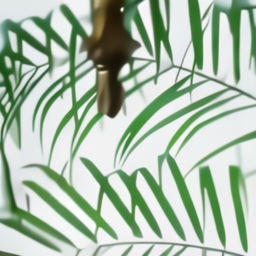}}
        \caption{PnP-DRFDR}
        \label{fig:PnP-DRFDR leaves image for noise level 7.65}
    \end{subfigure}
    \hfill
    \begin{subfigure}{0.18\textwidth}
        \centering
        \fbox{\includegraphics[width=\linewidth]{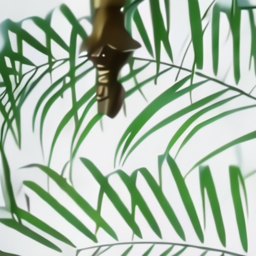}}
        \caption{FISTA-type PnP-DYS}
        \label{fig:FISTA_PnP_DYS leaves image for noise level 7.65}
    \end{subfigure}
    \hfill
    \begin{subfigure}{0.18\textwidth}
        \centering
        \fbox{\includegraphics[width=\linewidth]{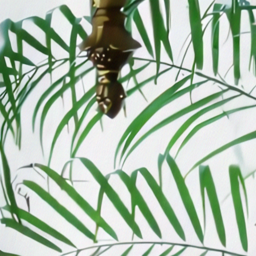}}
        \caption{FISTA-type PnP-DYS with Line-search}
        \label{fig:FISTA-type PnP-DYS with Line-search leaves image for noise level 7.65}
    \end{subfigure}
    \caption{Images of leaves at noise level $7.65$}
    \label{fig:Leaves images for 7.65}
\end{figure}

\begin{figure}[H]
    \centering
    \begin{subfigure}{0.18\textwidth}
        \centering
        \fbox{\includegraphics[width=\linewidth]{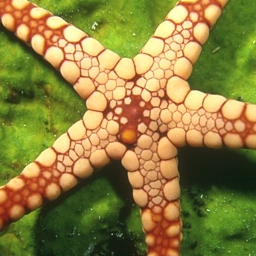}}
        \caption{Input image}
        \label{fig:Input starfish image for noise level 7.65}
    \end{subfigure}
    \hfill
    \begin{subfigure}{0.18\textwidth}
        \centering
        \fbox{\includegraphics[width=\linewidth]{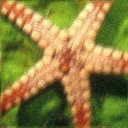}}
        \caption{Blurred image}
        \label{fig:Blurred starfish image for noise level 7.65}
    \end{subfigure}
    \hfill
    \begin{subfigure}{0.18\textwidth}
        \centering
        \fbox{\includegraphics[width=\linewidth]{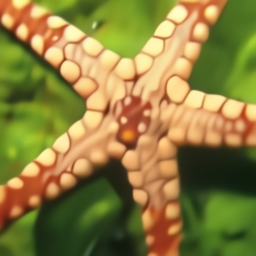}}
        \caption{Extrapolated PnP-DYS}
        \label{fig:Extrapolated_PnP starfish image for noise level 7.65}
    \end{subfigure}

    \vspace{0.1cm}

    \begin{subfigure}{0.18\textwidth}
        \centering
        \fbox{\includegraphics[width=\linewidth]{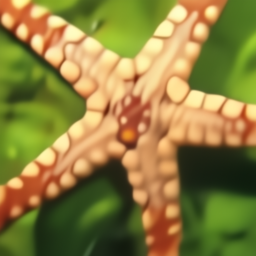}}
        \caption{PnP-DRFDR}
        \label{fig:PnP-DRFDR starfish image for noise level 7.65}
    \end{subfigure}
    \hfill
    \begin{subfigure}{0.18\textwidth}
        \centering
        \fbox{\includegraphics[width=\linewidth]{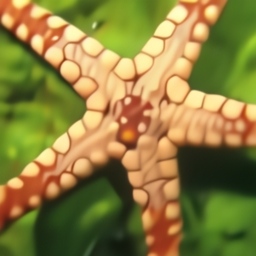}}
        \caption{FISTA-type PnP-DYS}
        \label{fig:FISTA_PnP_DYS starfish image for noise level 7.65}
    \end{subfigure}
    \hfill
    \begin{subfigure}{0.18\textwidth}
        \centering
        \fbox{\includegraphics[width=\linewidth]{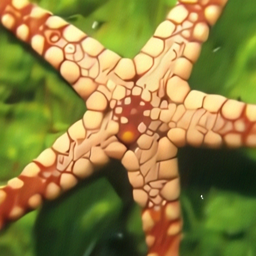}}
        \caption{FISTA-type PnP-DYS with Line-search}
        \label{fig:FISTA-type PnP-DYS with Line-search starfish image for noise level 7.65}
    \end{subfigure}
    \caption{Images of Starfish in noise level $7.65$}
    \label{fig:Starfish images for 7.65}
\end{figure}

\begin{figure}[H]
    \centering
    \begin{subfigure}{0.34\textwidth}
        \includegraphics[width=\linewidth]{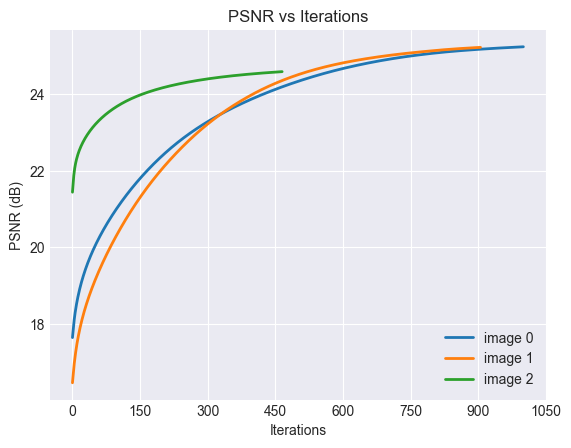}
        \caption{Extrapolated PnP-DYS}
        \label{fig:Extrapolated PnP PSNR for noise level 7.65}
    \end{subfigure}
    \hfill
    \begin{subfigure}{0.34\textwidth}
        \includegraphics[width=\linewidth]{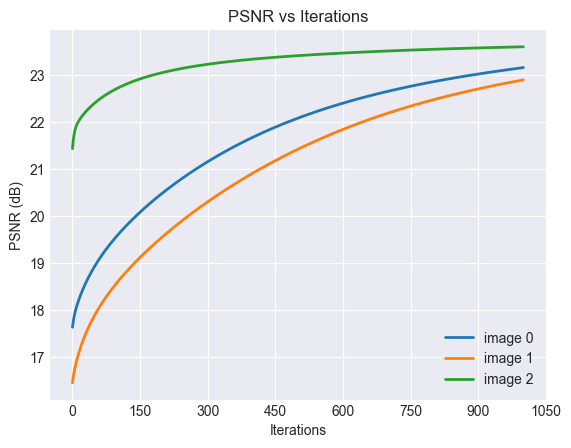}
        \caption{PnP-DRFDR}
        \label{fig:PnP-DRFDR PSNR for noise level 7.65}
    \end{subfigure}
    \hfill
    \begin{subfigure}{0.34\textwidth}
        \includegraphics[width=\linewidth]{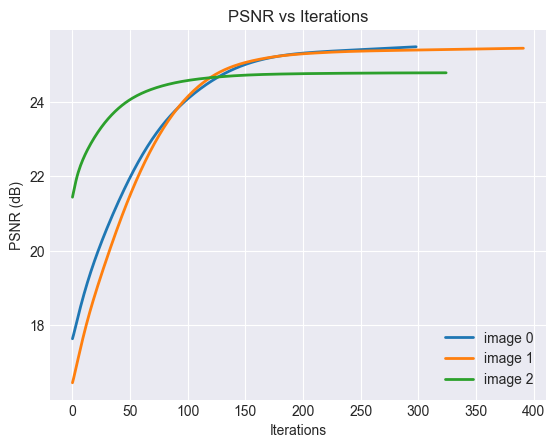}
        \caption{FISTA-type PnP-DYS}
        \label{fig:FISTA-type PnP-DYS PSNR for noise level 7.65}
    \end{subfigure}
    \hfill
    \begin{subfigure}{0.34\textwidth}
        \includegraphics[width=\linewidth]{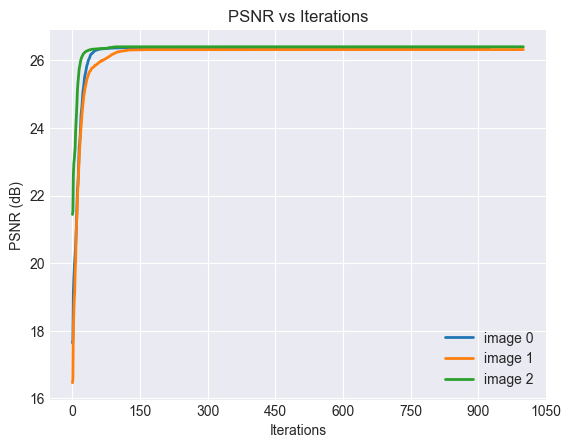}
        \caption{FISTA-type PnP-DYS with Line-search}
        \label{fig:FISTA-type PnP-DYS with Line-search PSNR for noise level 7.65}
    \end{subfigure}
    \caption{Iterations vs PSNR value graphs for noise level $7.65$}
    \label{fig:PSNR graphs for noise level 7.65}
\end{figure}

\begin{figure}[H]
    \centering
    \begin{subfigure}{0.18\textwidth}
        \centering
        \fbox{\includegraphics[width=\linewidth]{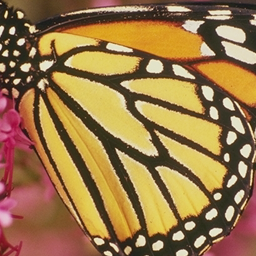}}
        \caption{Input image}
        \label{fig:Input butterfly image for noise level 12.75}
    \end{subfigure}
    \hfill
    \begin{subfigure}{0.18\textwidth}
        \centering
        \fbox{\includegraphics[width=\linewidth]{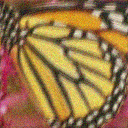}}
        \caption{Blurred image}
        \label{fig:Blurred butterfly image for noise level 12.75}
    \end{subfigure}
    \hfill
    \begin{subfigure}{0.18\textwidth}
        \centering
        \fbox{\includegraphics[width=\linewidth]{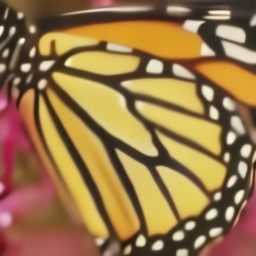}}
        \caption{Extrapolated PnP-DYS}
        \label{fig:Extrapolated_PnP butterfly image for noise level 12.75}
    \end{subfigure}

    \vspace{0.1cm}

    \begin{subfigure}{0.18\textwidth}
        \centering
        \fbox{\includegraphics[width=\linewidth]{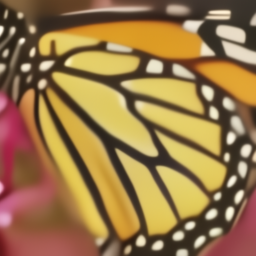}}
        \caption{PnP-DRFDR}
        \label{fig:PnP-DRFDR butterfly image for noise level 12.75}
    \end{subfigure}
    \hfill
    \begin{subfigure}{0.18\textwidth}
        \centering
        \fbox{\includegraphics[width=\linewidth]{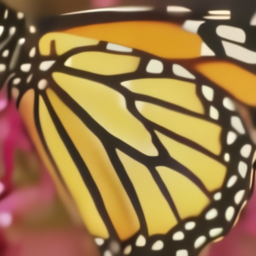}}
        \caption{FISTA-type PnP-DYS}
        \label{fig:FISTA_PnP_DYS butterfly image for noise level 12.75}
    \end{subfigure}
    \hfill
    \begin{subfigure}{0.18\textwidth}
        \centering
        \fbox{\includegraphics[width=\linewidth]{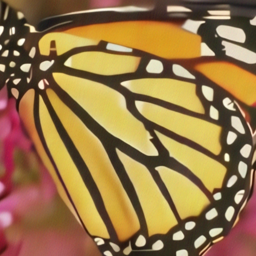}}
        \caption{FISTA-type PnP-DYS with Line-search}
        \label{fig:FISTA-type PnP-DYS with Line-search butterfly image for noise level 12.75}
    \end{subfigure}
    \caption{Images of Butterfly in noise level $12.75$}
    \label{fig:Butterfly images for 12.75}
\end{figure}

\begin{figure}[H]
    \centering
    \begin{subfigure}{0.18\textwidth}
        \centering
        \fbox{\includegraphics[width=\linewidth]{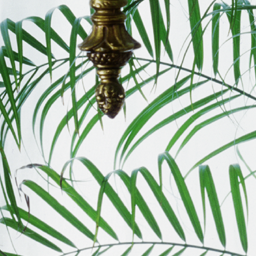}}
        \caption{Input image}
        \label{fig:Input leaves image for noise level 12.75}
    \end{subfigure}
    \hfill
    \begin{subfigure}{0.18\textwidth}
        \centering
        \fbox{\includegraphics[width=\linewidth]{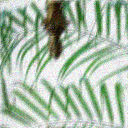}}
        \caption{Blurred image}
        \label{fig:Blurred leaves image for noise level 12.75}
    \end{subfigure}
    \hfill
    \begin{subfigure}{0.18\textwidth}
        \centering
        \fbox{\includegraphics[width=\linewidth]{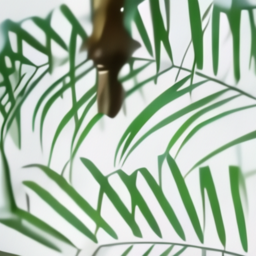}}
        \caption{Extrapolated PnP-DYS}
        \label{fig:Extrapolated_PnP leaves image for noise level 12.75}
    \end{subfigure}

    \vspace{0.1cm}

    \begin{subfigure}{0.18\textwidth}
        \centering
        \fbox{\includegraphics[width=\linewidth]{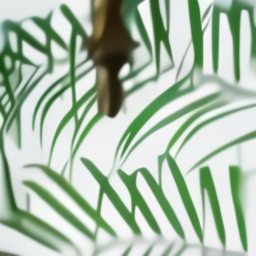}}
        \caption{PnP-DRFDR}
        \label{fig:PnP-DRFDR leaves image for noise level 12.75}
    \end{subfigure}
    \hfill
    \begin{subfigure}{0.18\textwidth}
        \centering
        \fbox{\includegraphics[width=\linewidth]{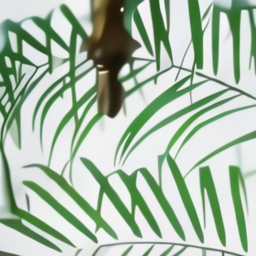}}
        \caption{FISTA-type PnP-DYS}
        \label{fig:FISTA_PnP_DYS leaves image for noise level 12.75}
    \end{subfigure}
    \hfill
    \begin{subfigure}{0.18\textwidth}
        \centering
        \fbox{\includegraphics[width=\linewidth]{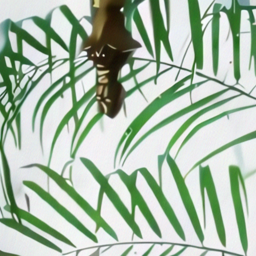}}
        \caption{FISTA-type PnP-DYS with Line-search}
        \label{fig:FISTA-type PnP-DYS with Line-search leaves image for noise level 12.75}
    \end{subfigure}
    \caption{Images of Leaves in noise level $12.75$}
    \label{fig:Leaves images for 12.75}
\end{figure}

\begin{figure}[H]
    \centering
    \begin{subfigure}{0.18\textwidth}
        \centering
        \fbox{\includegraphics[width=\linewidth]{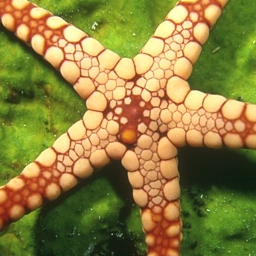}}
        \caption{Input image}
        \label{fig:Input starfish image for noise level 12.75}
    \end{subfigure}
    \hfill
    \begin{subfigure}{0.18\textwidth}
        \centering
        \fbox{\includegraphics[width=\linewidth]{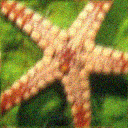}}
        \caption{Blurred image}
        \label{fig:Blurred starfish image for noise level 12.75}
    \end{subfigure}
    \hfill
    \begin{subfigure}{0.18\textwidth}
        \centering
        \fbox{\includegraphics[width=\linewidth]{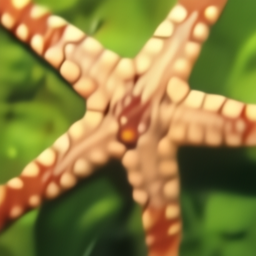}}
        \caption{Extrapolated PnP-DYS}
        \label{fig:Extrapolated_PnP starfish image for noise level 12.75}
    \end{subfigure}
    
    \vspace{0.1cm}

    \begin{subfigure}{0.18\textwidth}
        \centering
        \fbox{\includegraphics[width=\linewidth]{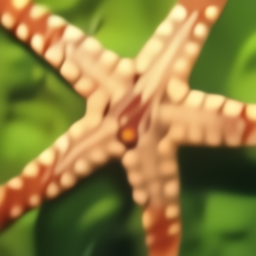}}
        \caption{PnP-DRFDR}
        \label{fig:PnP-DRFDR starfish image for noise level 12.75}
    \end{subfigure}
    \hfill
    \begin{subfigure}{0.18\textwidth}
        \centering
        \fbox{\includegraphics[width=\linewidth]{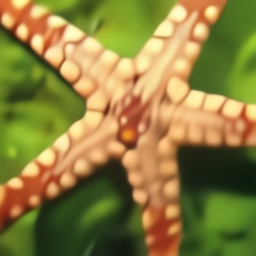}}
        \caption{FISTA-type PnP-DYS}
        \label{fig:FISTA_PnP_DYS starfish image for noise level 12.75}
    \end{subfigure}
    \hfill
    \begin{subfigure}{0.18\textwidth}
        \centering
        \fbox{\includegraphics[width=\linewidth]{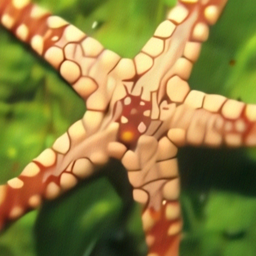}}
        \caption{FISTA-type PnP-DYS with Line-search}
        \label{fig:FISTA-type PnP-DYS with Line-search starfish image for noise level 12.75}
    \end{subfigure}
    \caption{Images of Starfish in noise level $12.75$}
    \label{fig:Starfish images for 12.75}
\end{figure}

\begin{figure}[H]
    \centering
    \begin{subfigure}{0.34\textwidth}
        \includegraphics[width=\linewidth]{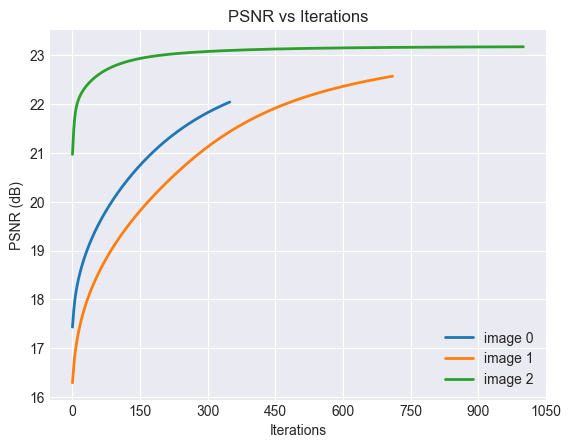}
        \caption{Extrapolated PnP-DYS}
        \label{fig:Extrapolated PnP PSNR for noise level 12.75}
    \end{subfigure}
    \hfill
    \begin{subfigure}{0.34\textwidth}
        \includegraphics[width=\linewidth]{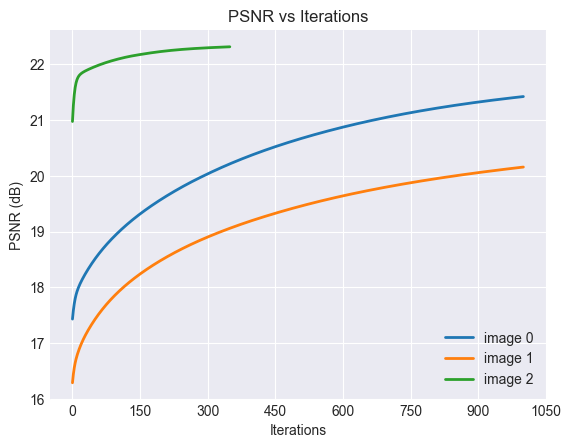}
        \caption{PnP-DRFDR}
        \label{fig:PnP-DRFDR PSNR for noise level 12.75}
    \end{subfigure}
    \hfill
    \begin{subfigure}{0.34\textwidth}
        \includegraphics[width=\linewidth]{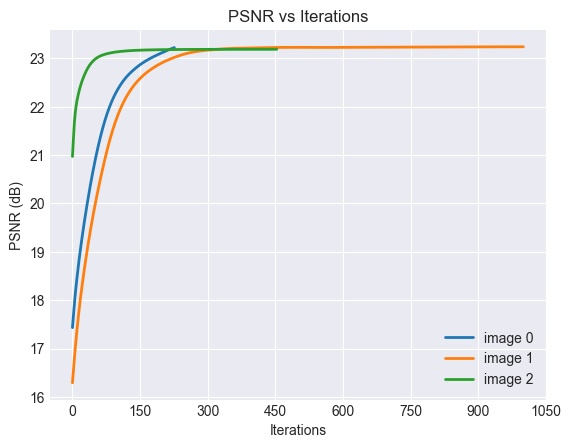}
        \caption{FISTA-type PnP-DYS}
        \label{fig:FISTA-type PnP-DYS PSNR for noise level 12.75}
    \end{subfigure}
    \hfill
    \begin{subfigure}{0.34\textwidth}
        \includegraphics[width=\linewidth]{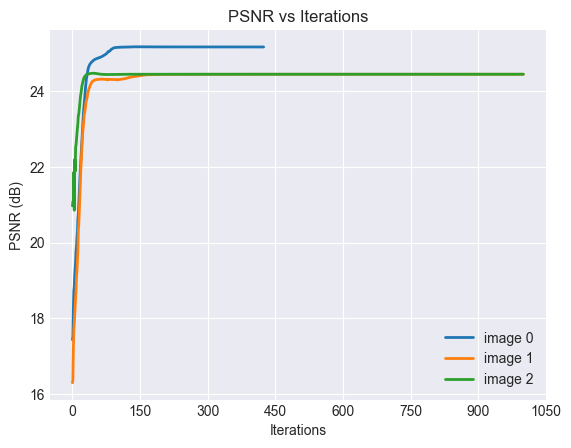}
        \caption{FISTA-type PnP-DYS with Line-search}
        \label{fig:FISTA-type PnP-DYS with Line-search PSNR for noise level 12.75}
    \end{subfigure}
    \caption{Iterations vs PSNR value graphs for noise level $12.75$}
    \label{fig:PSNR graphs for noise level 12.75}
\end{figure}

\begin{table}[H]
\centering

\resizebox{1.0\linewidth}{!}{%
\begin{tabular}{c ccc ccc ccc}
\toprule
\textbf{Noise level} & \multicolumn{3}{c}{2.55} & \multicolumn{3}{c}{7.65} & \multicolumn{3}{c}{12.75}\\
\midrule
\textbf{Algorithms} & Butterfly & Leaves & Starfish & Butterfly & Leaves & Starfish & Butterfly & Leaves & Starfish\\
\midrule
Extrapolated PnP-DYS & 26.93 & 26.97 & 27.21 & 25.27 & 25.25 & 24.63 & 22.07 & 22.60 & 23.21\\
\midrule
PnP-DRFDR & 25.58 & 26.05 & 25.19 & 23.21 & 22.94 & 23.66 & 21.45 & 20.19 & 22.36\\
\midrule
\textbf{FISTA-type PnP-DYS} & \textbf{27.31} & \textbf{27.78} & \textbf{27.29} & \textbf{25.50} & \textbf{25.46} & \textbf{24.81} & \textbf{23.24} & \textbf{23.26} & \textbf{23.22}\\
\midrule
\makecell{\textbf{FISTA type PnP-DYS}\\\textbf{with Line-search}} & \textbf{28.42} & \textbf{27.86} & \textbf{29.06} & \textbf{26.37} & \textbf{26.30} & \textbf{26.39} & \textbf{25.16} & \textbf{24.44} & \textbf{24.45}\\
\bottomrule
\end{tabular}
}

\caption{PSNR values (in dB) of the deblurred images obtained by different algorithms under different noise levels}
\label{table:Image restoration problem}
\end{table}


\section*{Acknowledgments}
The first author gratefully acknowledges the financial support provided by the University Grants Commission (UGC), Government of India, through the Junior Research Fellowship (UGC-JRF) (Student ID: 231620047584). The authors also acknowledge the Mahatma Gandhi Central Library (MGCL), Indian Institute of Technology Roorkee, for providing access to valuable library resources that supported this work.
\section{Conclusion}\label{Conclusion}


%
%

\bibliographystyle{spmpsci}      
\bibliography{reference}   

%
%



\end{document}